\newtheorem{theorem}{Theorem}
\newtheorem{lemma}[theorem]{Lemma}
\newtheorem{Proposition}[theorem]{Proposition}
\newtheorem{definition}[theorem]{Definition}
\newtheorem{remark}[theorem]{Remark}
\newtheorem{corollary}[theorem]{Corollary}
\newcommand{\R}{\mathbb{R}}
\newcommand{\N}{\mathbb{N}}
\renewcommand{\O}{\mathcal{O}}
\newcommand{\C}{\mathcal{C}}
\newcommand{\K}{\mathcal{K}}
\newcommand{\F}{\mathcal{F}}
\newcommand{\D}{\mathcal{D}}
\newcommand{\Om}{\Omega}
\newcommand{\eps}{\varepsilon}
\newcounter{mnotecount}[section]
\newcommand{\rmnote}[1]{}
\title{Improved description of Blaschke--Santal\'o diagrams via numerical shape optimization}
 \author{Ilias Ftouhi}
 \date{
    \today
}
\begin{document}
\maketitle






\begin{abstract}
We propose a method based on the combination of theoretical results on Blaschke--Santal\'o diagrams and numerical shape optimization techniques to obtain improved description of Blaschke--Santal\'o diagrams in the class of planar convex sets. To illustrate our approach, we study three relevant diagrams involving the perimeter $P$, the diameter $d$, the area $A$ and the first eigenvalue of the Laplace operator with Dirichlet boundary condition $\lambda_1$. The first diagram is a purely geometric one involving the triplet $(P,d,A)$ and the two other diagrams involve geometric and spectral functionals, namely $(P,\lambda_1,A)$ and $(d,\lambda_1,A)$ where a strange phenomenon of non-continuity of the extremal shapes is observed.

\end{abstract}












\section{Introduction}\label{s:introduction}

A Blaschke--Santaló diagram is a tool that allows one to visualize all possible inequalities relating three quantities depending on the shape of a set: it was named as a reference to the works of W. Blaschke \cite{blaschke} and L. Santal\'o \cite{santalo}, where the authors were looking for the description of inequalities involving three geometrical quantities for a given convex set. Afterward, these diagrams have been extensively studied especially for the class of convex sets and more recently for triplets involving geometric and spectral functionals. For various examples on purely geometric functionals, we refer to the following non-exhaustive list of works \cite{cifre3,MR3653891,branden,delyon2,ftouhi_cheeger,ftouhi_inequality}, and for examples involving both geometric and spectral quantities, we refer to \cite{freitas_antunes,MR3068840,BBF,zbMATH07369278,ftouh,ftouhi_henrot,MR4338207,LZ,zbMATH06736468,vdBBP}. \vspace{2mm}

\textcolor{black}{Let us first present an abstract setting for Blaschke--Santal\'o diagrams. We consider  $\F$ a class of subsets of $\R^2$ and $J_i:\Om\in\F\longmapsto J_i(\Om)\in \R$, where $i\in\{1,2,3\}$ three shape functionals. The Blaschke--Santal\'o diagram corresponding to the triplet $(J_1,J_2,J_3)$ is defined as the following three-dimensional region
$$\{(J_1(\Om),J_2(\Om),J_3(\Om))\ |\ \Om\in\F\}\subset \R^3.$$
If one assumes the functionals $J_1$, $J_2$ and $J_3$ to be homogeneous (i.e., for every $i\in\{1,2,3\}$ there exists $\alpha_i\in\R$ such that $J_i(t \Om)= t^{\alpha_i}J_i(\Om)$ for $t>0$ and $\Om\in\F$), which is the case for various relevant functionals including those considered in the present paper, then it is sufficient to study a two-dimensional diagram where the value of $J_3$ is prescribed as a constraint.} 

Throughout the paper, we will adopt the following definition for Blaschke--Santal\'o diagrams:
\begin{definition}
Let $\F$ be a class of subsets of $\R^2$ and $J_i:\Om\in\F\longmapsto J_i(\Om)\in \R$, where $i\in\{1,2,3\}$, three homogeneous shape functionals. The Blaschke--Santal\'o diagram of the triplet $(J_1,J_2,J_3)$ for the class $\F$ is given by the following planar region: 
$$\D := \{(J_1(\Om),J_2(\Om))\ |\ \Om\in \F\ \text{and}\ J_3(\Om)=1\}\subset \R^2.$$
\end{definition}
In the present paper, we are interested in the class $\K$ of bounded planar convex sets with nonempty interior (i.e., convex bodies). We develop an approach based on the combination of theoretical and numerical results to provide an improved description of Blaschke--Santal\'o diagrams. We recall that a complete description of such diagrams is equivalent to finding all the possible inequalities relating the involved functionals. Unfortunately, such analytical description can be quite difficult to obtain, especially when spectral functionals are involved, see for example \cite[Conjecture 1]{ftouh} and the discussion therein. Thus, it is interesting to develop numerical tools that allow to obtain approximations of Blaschke--Santal\'o diagrams, which will surely help to develop the intuition and state some interesting conjectures. A natural idea is to generate a large number of random convex sets (more precisely convex polygons) and compute the values of the considered functionals on each set. Even if this process allows to obtain a cloud of dots that approaches the diagram, the results are not satisfactory since it is observed that some regions of the diagrams (those corresponding to smooth shapes) are quite sparse. We refer to \cite{freitas_antunes},   \cite[Section 3.1]{ftouh} and \cite[Section 5.3]{LZ} for some examples.  For a nice approximation of Blaschke--Santal\'o diagram with using Lloyd's algorithm we refer to \cite{oudet_bogosel}. 

In the present paper, we propose to combine a theoretical result of simple connectedness of the diagrams (see Theorems \ref{th:main_1} and \ref{th:main_2}) with the obtained numerical solutions of some relevant shape optimization problems to provide an improved approximation of the diagrams which is considerably better than the one obtained by the process of random generation of convex polygons, see Section \ref{s:bs_diagrams} and the figures therein. We illustrate our strategy by applying it to diagrams involving the perimeter $P$, the diameter $d$, the area $A$ and the first Dirichlet eigenvalue of the Laplacian $\lambda_1$ defined as follows
$$\lambda_1(\Om):= \min\left\{\frac{\int_{\Om} |\nabla u|^2dx}{\int_\Om u^2 dx},\ u\in H^1_0(\Om)\backslash\{0\}\right\},$$
where $H^1_0(\Om)$ denotes the completion for the $H^1$-norm of the space $C^\infty_c(\Om)$ of infinitely differentiable functions of
compact support in $\Om$, even though our strategy can be applied to many other examples.  

More precisely, we study the following diagrams: 
\begin{itemize}
    \item The purely geometric diagram $\D_1$ corresponding to the triplet $(P,A,d)$:
    \begin{equation}\label{def:D_1}
    \D_1:= \{(P(\Om),A(\Om))\ |\ \text{$\Omega\in\K$}\ \text{and}\ d(\Om)=1\}.    
    \end{equation}
    Up to our knowledge $\D_1$ is not completely characterized yet, even though, several partial (but advanced) results are known, see \cite{MR1544679,MR2410988}. 
    \item The diagram $\D_2$ corresponding to the triplet $(P,\lambda_1,A)$:
    \begin{equation}\label{def:D_2}
    \D_2:= \{(P(\Om),\lambda_1(\Om))\ |\ \text{$\Omega\in\K$}\ \text{and}\ A(\Om)=1\},    
    \end{equation}
that has been introduced for the first time in \cite{freitas_antunes} and extensively studied in \cite{ftouh}. 
    \item The diagram $\D_3$ corresponding to the triplet $(d,\lambda_1,A)$:
    \begin{equation}\label{def:D_3}
     \D_3:= \{(d(\Om),\lambda_1(\Om))\ |\ \text{$\Omega\in\K$}\ \text{and}\ A(\Om)=1\}.  
    \end{equation}
This diagram has not yet been considered in the literature and we deemed it important to consider it since a strange phenomenon of non-continuity of the extremal shapes is observed,  see Section \ref{ss:dla} and more precisely Figure \ref{fig:optimal_DLA}. 
\end{itemize}

In \cite{ftouh}, the authors prove that the diagram $\D_2$ is given by the set of points located between the graphs of two continuous and strictly increasing functions. In particular, this shows that the diagrams are simply connected (i.e., contain no holes). In the present paper, we prove results in the same flavor for the diagrams $\D_1$ and $\D_3$. Once such theoretical results are proved, it remains to numerically solve the following shape optimization problems: 
\begin{equation}\label{prob:convex}
    \min\slash \max \{(J_1(\Om)\ |\ \text{$\Omega\in \K$},\ J_2(\Om)=c_0\ \text{and}\ J_3(\Om)=1\},
\end{equation}
where $J_1$, $J_2$ and $J_3$ are the involved shape functionals and $c_0>0$ is a positive constant. This process allows to obtain improved descriptions of the diagrams. 

\color{black}
For the diagram $\D_1$, the obtained result is the following: 
\begin{theorem}\label{th:main_1}
We define 
\begin{equation}\label{eq:phi}
\begin{array}{cccl}
   \phi : &(2,\pi]& \longrightarrow&\mathbb{R}\\
   & x &\longmapsto &\inf\left\{A(\Om)\ |\ \Omega\in \mathcal{K},\ d(\Om)=1\ \text{and}\ P(\Omega)= x\right\},
\end{array}
\end{equation}
\begin{equation}\label{eq:psi}
\begin{array}{cccl}
 \psi : &(2,\pi]& \longrightarrow&\mathbb{R}\\
   & x &\longmapsto& \sup\left\{A(\Om)\ |\ \Omega\in \mathcal{K},\ d(\Om)=1\ \text{and}\ P(\Omega)= x\right\}.
\end{array}
\end{equation}
The functions $\phi$ and $\psi$ are continuous and strictly increasing. Moreover,we have 
$$\D_1 = \{(x,y)\ |\ x\in(2,\pi]\ \ \text{and}\ \ \phi(x)\leq y\leq \psi(x)\}.$$
\end{theorem}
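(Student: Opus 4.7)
The plan is to establish the theorem in four steps: existence of extremizers (so that $\phi$ and $\psi$ are well-defined and attained), connectedness of each vertical slice $\K_x := \{\Om \in \K : d(\Om) = 1,\ P(\Om) = x\}$ (which identifies $\D_1$ with the region between the two graphs), strict monotonicity, and continuity. Throughout, I work in Hausdorff topology and use the continuity of the functionals $A$, $P$, $d$ with respect to Hausdorff convergence on $\K$.

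First I would verify that the range of $P$ on $\{\Om \in \K : d(\Om) = 1\}$ is exactly $(2,\pi]$: the lower bound follows from $P \geq 2d = 2$ with strict inequality since $\Om$ has nonempty interior, the upper bound from Cauchy's formula $P = \int_0^\pi w(\theta)\,d\theta$ combined with $w(\theta) \leq d = 1$, and every intermediate value is realized, e.g.\ by ellipses with major axis $1$. Existence of extremizers for each fixed $x$ then follows from Blaschke's selection theorem, since $\K_x$ is closed and bounded and $A$ is Hausdorff-continuous. The crucial step is to show that $\K_x$ is path-connected: given $C_0, C_1 \in \K_x$, I apply rigid motions so that both contain $(0,0)$ and $(1,0)$ as a diameter-realizing pair, and form the Minkowski convex combination $C_t := (1-t)C_0 + tC_1$. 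By additivity of perimeter under Minkowski sums, $P(C_t) = x$; for the diameter, the inclusion $(0,0),(1,0) \in C_t$ forces $d(C_t) \geq 1$, while $d(C_t) \leq (1-t)d(C_0) + td(C_1) = 1$, so $d(C_t) = 1$. Continuity of $t \mapsto C_t$ in Hausdorff metric then gives continuity of $A(C_t)$, and the intermediate value theorem identifies $\{A(\Om) : \Om \in \K_x\}$ with the interval $[\phi(x), \psi(x)]$.

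For the strict monotonicity I would use two tailored deformations. To show $\psi$ is strictly increasing, pick a maximizer $\Om_1$ at $x_1 < x_2$ and form the rescaled Minkowski sum $\alpha \Om_1 + \beta B$ with $B$ the closed unit disk, choosing $\alpha + 2\beta = 1$ to preserve the diameter and $\alpha x_1 + 2\pi\beta = x_2$ to hit the target perimeter; the resulting area, which is quadratic in $(\alpha, \beta)$, exceeds $\psi(x_1)$ thanks to the isoperimetric bound $\psi(x_1) \leq x_1^2/(4\pi) < x_1/4$ (valid for $x_1 < \pi$), reducing the target inequality to a linear expression in $\alpha$ positive at both endpoints. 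To show $\phi$ is strictly increasing, pick a minimizer $\Om_2$ at $x_2 > x_1$ and form the anisotropic Minkowski sum $t\Om_2 + (1-t)[0,e]$, where $e$ is a unit vector along a diameter of $\Om_2$ and $t$ is chosen so that the new perimeter equals $x_1$; here the new area is strictly less than $\phi(x_2)$ provided $\phi(x_2) \geq \tfrac{1}{2} w_\perp(\Om_2)$, where $w_\perp$ denotes the width of $\Om_2$ perpendicular to $e$. This estimate holds because the convex hull of the two diameter endpoints and two extreme points achieving $w_\perp$ is a quadrilateral inscribed in $\Om_2$ with area exactly $\tfrac{1}{2} w_\perp(\Om_2)$. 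Continuity of $\phi$ and $\psi$ then follows: lower-semicontinuity of $\phi$ and upper-semicontinuity of $\psi$ are immediate from Hausdorff compactness, and the opposite inequalities are produced by applying the same two deformations to the extremizers at $x$ to obtain admissible sets of nearby perimeter with continuously varying area.

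The main obstacle is the perimeter- and diameter-preserving homotopy underlying the connectedness of $\K_x$. The key insight is that anchoring the diameter endpoints in a common pair of points turns the a priori strictly subadditive diameter under Minkowski interpolation into an equality, while the additivity of perimeter is automatic; without this alignment, a convex combination would typically decrease the diameter and leave $\K_x$. Once connectedness is in hand, the remaining ingredients reduce to elementary geometric inequalities (isoperimetric and inscribed-quadrilateral area bounds) and routine Hausdorff compactness arguments.
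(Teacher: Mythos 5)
Your proposal is correct, and it reaches the same conclusion as the paper while organizing the key steps somewhat differently. The vertical-convexity part is essentially identical to the paper's argument: the paper also takes the Minkowski interpolation $(1-t)K_0+tK_1$ between two extremizers with colinear diameters, so that $P$ is exactly additive and $d$ is preserved (your device of anchoring a common diameter-realizing pair $(0,0),(1,0)$ is just another way of forcing the lower bound $d(C_t)\ge 1$ that complements subadditivity). Where you genuinely diverge is in the monotonicity and continuity of $\phi$ and $\psi$. The paper proves these by the same scheme it uses for the diagram $\D_3$: it establishes a perturbation lemma in the class $\K^{\text{diam}}_1$ (constant-width bodies are the only local maximizers of $P$, no local minimizer of $P$, disks are the only local maximizers of $A$, no local minimizer of $A$), and then argues by contradiction that a failure of strict monotonicity would force a local extremum of $\phi$ or $\psi$, hence a forbidden local extremizer of the area at fixed diameter; continuity is obtained by combining Blaschke compactness (for one inequality) with the perturbation lemma (to build nearby admissible competitors for the other). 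Your route replaces this local, contradiction-based argument by two explicit global deformations --- Minkowski addition of a disk (with the Steiner formula and the isoperimetric/isodiametric bounds $4A<x_1<\pi$, $A<\pi/4$ making the quadratic in $\beta$ positive on $[0,1/2]$) and Minkowski addition of a segment along a diameter (with the inscribed-quadrilateral bound $A\ge\tfrac12 w_\perp$). Both deformations preserve the diameter and move the perimeter in opposite directions, so they simultaneously deliver strict monotonicity with an explicit comparison between $\phi(x_1)$ and $\phi(x_2)$ (resp.\ $\psi$) and the missing semicontinuity inequalities. What your approach buys is a self-contained, quantitative proof for $\D_1$ that avoids the abstract local-extremizer machinery; what the paper's approach buys is uniformity, since the same perturbation-lemma scheme transfers to diagrams such as $\D_3$ where no explicit global deformation of this kind is available.
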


As for the diagram $\D_3$, we obtained the following result: 
\begin{theorem}\label{th:main_2}
We define 
\begin{equation}\label{eq:f}
\begin{array}{cccl}
   f : &[\frac{2}{\sqrt{\pi}},+\infty)& \longrightarrow&\mathbb{R}\\
   & x &\longmapsto &\inf\left\{\lambda_1(\Om)\ |\ \Omega\in \mathcal{K},\ A(\Om)=1\ \text{and}\ d(\Omega)= x\right\},
\end{array}
\end{equation}
\begin{equation}\label{eq:g}
\begin{array}{cccl}
 g : &[\frac{2}{\sqrt{\pi}},+\infty)& \longrightarrow&\mathbb{R}\\
   & x &\longmapsto& \sup\left\{\lambda_1(\Om)\ |\ \Omega\in \mathcal{K},\ A(\Om)=1\ \text{and}\ d(\Omega)= x\right\}.
\end{array}
\end{equation}
The functions $f$ and $g$ are continuous and $f$ is also strictly increasing. Moreover, we have 
$$\D_3 = \{(x,y)\ |\ x\in[\frac{2}{\sqrt{\pi}},+\infty)\ \ \text{and}\ \ f(x)\leq y\leq g(x)\}.$$
\end{theorem}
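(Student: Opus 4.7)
The plan is to adapt the four-step blueprint of \cite{ftouh} (used there for the diagram $\D_2$) to the present diameter-area-eigenvalue setting. \emph{Existence, left endpoint, and continuity of $f,g$.} The restriction $x\geq 2/\sqrt{\pi}$ is the isodiametric inequality $A\leq \pi d^2/4$ at $A=1$, saturated only by the disk, which yields $f(2/\sqrt{\pi})=g(2/\sqrt{\pi})=\pi j_{0,1}^2$. For every admissible $x$, the class $\K_x:=\{\Om\in\K : A(\Om)=1,\, d(\Om)=x\}$ is non-empty and compact in the Hausdorff topology, by Blaschke selection together with the continuity of $A$ and $d$; since $\lambda_1$ is Hausdorff-continuous on $\K$, the infimum $f(x)$ and the supremum $g(x)$ are attained. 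Continuity then follows from a two-sided semi-continuity argument. Lower semi-continuity of $f$ (and upper semi-continuity of $g$) follows by extracting a Hausdorff limit of extremizers $\Om_n\in\K_{x_n}$ for $x_n\to x_0$ and using continuity of $\lambda_1$. Upper semi-continuity of $f$ (and lower semi-continuity of $g$) follows by exhibiting, through a fixed extremizer $\Om_0\in\K_{x_0}$, a continuous family $x\mapsto\Om_x\in\K_x$ obtained by a two-parameter deformation---e.g., homothety combined with Minkowski addition of a small disk---whose two parameters are tuned so that $(A,d)=(1,x)$ holds, the non-degeneracy of the constraint map being a direct computation (with a stadium-shape perturbation replacing the disk at $x=2/\sqrt{\pi}$).

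\emph{Strict monotonicity of $f$ (main obstacle).} Fix $2/\sqrt{\pi}\leq x_1<x_2$ and let $\Om_2$ be a minimizer for $f(x_2)$ with diameter realized on a chord $[pq]$. I cut $\Om_2$ by a hyperplane orthogonal to $q-p$ at distance $\eps$ from $p$ to obtain $\Om_2^\eps\subsetneq\Om_2$ of diameter $x_2-\delta(\eps)$ and area $1-\beta(\eps)$ with $\delta(\eps),\beta(\eps)\to 0^+$; rescaling by $(1-\beta(\eps))^{-1/2}$ restores the area and yields an eigenvalue equal to $(1-\beta(\eps))\lambda_1(\Om_2^\eps)$. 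A Hadamard-type boundary variation estimate, exploiting that the first Dirichlet eigenfunction of $\Om_2$ vanishes on $\partial\Om_2$ and is therefore $O(\eps)$ on the cut chord, shows that $\lambda_1(\Om_2^\eps)-\lambda_1(\Om_2)$ is of strictly higher order in $\eps$ than $\beta(\eps)\lambda_1(\Om_2)$; the rescaled eigenvalue is consequently strictly less than $f(x_2)$, giving $f(y)<f(x_2)$ for every $y$ in a left-neighborhood of $x_2$. Applying the same local argument at every $x_0>2/\sqrt{\pi}$ and using that the resulting neighborhood size is locally bounded below (a consequence of the compactness of minimizer sets), a Heine--Borel chain propagates local strict decrease into the global strict monotonicity $f(x_1)<f(x_2)$. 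The quantitative comparison of $\delta(\eps)$, $\beta(\eps)$ and the eigenvalue variation is the delicate step I expect to handle last.

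\emph{Filling the diagram.} For fixed $x\geq 2/\sqrt{\pi}$, I connect a minimizer $\Om_{\min}$ and a maximizer $\Om_{\max}$ by a continuous path $s\in[0,1]\mapsto\Om_s\in\K_x$, obtained from the Minkowski interpolation $s\Om_{\max}+(1-s)\Om_{\min}$ corrected by a two-parameter adjustment (homothety plus a mild Minkowski perturbation by a disk) that maintains $A=1$ and $d=x$; the correction depends continuously on $s$ by the intermediate-value argument of the first paragraph. Since $\lambda_1$ is continuous along the path and attains $f(x)$ and $g(x)$ at the endpoints, the intermediate value theorem yields every $y\in[f(x),g(x)]$ as some $\lambda_1(\Om_s)$, completing the identification $\D_3=\{(x,y):x\geq 2/\sqrt{\pi},\,f(x)\leq y\leq g(x)\}$.
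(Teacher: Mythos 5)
Your overall architecture (compactness and two-sided semicontinuity for the continuity of $f$ and $g$; a perturbation producing competitors with nearby diameter; a path argument for vertical convexity) matches the paper's, but two of your three key steps rely on mechanisms that do not work as stated. The most serious is the strict monotonicity of $f$. Your cap-removal argument rests on the claim that $\lambda_1(\Om_2^\eps)-\lambda_1(\Om_2)$ is of strictly higher order than $\beta(\eps)$. This is false in general: the first variation of $\lambda_1$ under removal of a cap of area $\beta(\eps)$ at a boundary point $p$ is governed by Hadamard's formula and equals $|\nabla u_1(p)|^2\,\beta(\eps)+o(\beta(\eps))$ --- it is the boundary gradient, not the (indeed $O(\eps)$) value of $u_1$ on the chord, that enters. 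After rescaling to unit area the normalized eigenvalue changes by $\bigl(|\nabla u_1(p)|^2-\lambda_1(\Om_2)\bigr)\beta(\eps)+o(\beta(\eps))$, and the sign of this coefficient at a diameter endpoint is not controlled: by the Rellich--Pohozaev identity $\int_{\partial\Om}|\nabla u_1|^2\langle x,n\rangle\,d\sigma=2\lambda_1$ the quantity $|\nabla u_1|^2$ averages to $\lambda_1$ over the boundary, and for the unit-area disk one has $|\nabla u_1|^2\equiv\lambda_1$ there, so near-extremal sets give you no sign. (A further hazard: if the diameter of $\Om_2$ is attained by several pairs, your cut need not decrease the diameter at all, and the rescaling then increases it.) The paper avoids all quantitative eigenvalue asymptotics with a soft argument: if $f$ were not strictly increasing, continuity together with $f(x)\to+\infty$ would force an interior local minimum of $f$, whose extremal set would be a local minimizer of $\lambda_1$ among unit-area convex bodies for the Hausdorff distance; by assertion 3 of Lemma \ref{lem:perturbation} (i.e., \cite[Lemma 3.5]{ftouh}) the only such local minimizer is the disk, a contradiction. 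You should substitute this mechanism, since I do not see how to establish the missing sign condition in your version.

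The second problem is directional and affects both your continuity step and your filling step: at fixed area, Minkowski addition of a small disk strictly \emph{decreases} the diameter of any non-disk (as in the proof of assertion 1 of Lemma \ref{lem:perturbation}), and homothety is already spent on the area constraint and in any case preserves $d/\sqrt{A}$. So your two-parameter correction can only push the diameter down. For continuity you also need competitors with diameter $x_n>x_0$, and for the filling step Brunn--Minkowski makes the normalized interpolation $t\Om_{\max}+(1-t)\Om_{\min}$ have diameter $\leq x$, so the correction must push the diameter \emph{up}; the right tool is the truncation of assertion 2 of Lemma \ref{lem:perturbation} (intersect with a shrinking slab parallel to the diameter and rescale), which strictly and continuously increases the diameter at unit area. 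With that replacement your path-correction idea for vertical convexity can be made to work and is in fact more elementary than the paper's route, which does not correct the path at all: the paper lets the interpolated paths drift to the left of the vertical segment and concludes that the diagram has no holes by a winding-number argument. If you repair the two points above, your proof becomes a legitimate (and in the last step arguably simpler) variant of the paper's.
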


Before presenting the outline of the paper, let us provide some comments on the results of Theorems \ref{th:main_1} and \ref{th:main_2}: 
\begin{itemize}
    \item The functions $\phi$ and $\psi$ (resp. $f$ and $g$) are well defined on the interval $(2,\pi]$ (resp. $[\frac{2}{\sqrt{\pi}},+\infty)$). This can be easily seen by considering the family $(S_{1,d})_{d\ge 2}$ of stadiums of diameter $d\ge 2$ and inradius $1$. Indeed, it is straightforward to check that the functions $F:d\ge 2\longmapsto P(S_{1,d})/d(S_{1,d})$ and $G:d\ge 2\longmapsto d(S_{1,d})/\sqrt{A(S_{1,d})}$ are continuous and satisfy 
    $$F([2,+\infty)) = (2,\pi]\ \ \ \text{and}\ \ \ G([2,+\infty))= \left[\frac{2}{\sqrt{\pi}},+\infty\right).$$
    \item For the diagram $\D_3$, numerical simulations suggest that the function $g$ is also increasing. Nevertheless, we are not able to prove it because our method is based on some perturbation properties of the functionals under convexity constraint (see Lemma \ref{lem:perturbation}). More precisely, we would like to prove that there is no local maximizer of the first Dirichlet eigenvalue among convex sets of prescribed area. Such result seems to be difficult to obtain for the moment as the only result in the same direction that we are aware about is that a local maximizer (if it exists) should be a polygon, see \cite{lnp_new}. 
\end{itemize}
\color{black}


 \vspace{3mm}

\textbf{Outline of the paper:} The paper is organized as follows: Section \ref{s:proof_main} is devoted to the proof of Theorems \ref{th:main_1} and \ref{th:main_2}, we also present and prove some interesting results on the diagram $\D_3$ which has (up to our knowledge) never been considered in the literature, see Section \ref{ss:dla}. In Section \ref{s:numerical_shape_optimization_methods}, we present and review four parametrizations used to handle the convexity constraint and solve problems of the type \eqref{prob:convex}. Three of these parametrizations are quite classical, this is the case of the ones based on the support functions (Section \eqref{ss:support_function}), see for example \cite{antunes2019parametric,support_premier,bogosel:hal-03607776}, the gauge function (Section \eqref{ss:gauge_function}), see \cite[Section 5]{bogosel:hal-03607776} and on the vertices of polygonal approximations of the sets (Section \eqref{ss:vertices}), see \cite{MR4089508}, and the last one based on the radial function (Section \eqref{ss:radial_function}), which is up to our knowledge original as we did not manage to find a work where the convexity of the sets is modeled via some quadratic inequality constraints as in \eqref{ineq:conv_radial}. In Section \ref{s:bs_diagrams}, we present the improved descriptions of the diagrams $\D_1$, $\D_2$ and $\D_3$. Finally, in the appendix, we compute the shape derivative of the diameter which has been successfully used in the numerical simulations.


\section{Proof of Theorems \ref{th:main_1} and \ref{th:main_2} and other theoretical results}\label{s:proof_main}

\color{black}
In the whole paper, we adopt the following notations: 
\begin{itemize}
    \item $M(x,y)$ stands for a point of coordinates $x$ and $y$. 
    \item $\K$ stands for the class of bounded planar convex sets with nonempty interior (i.e., convex bodies). 
    \item If $n, m\in \mathbb{N}$ such that $n\leq m$, we denote by $\llbracket n,m \rrbracket$ the set of integers $\{n,n+1,\dots,m\}$. 
    \item We say that a Blaschke-Santal\'o diagram $\D$ is vertically convex if for every two points $A(x, yA)$ and $B(x, yB)$ (with the same abscissa) that belong to $\D$, the (vertical) segment $[AB]$ also belongs to the diagram.
\end{itemize}
\color{black}

The diagram $\D_2$ has been studied in detail in \cite{ftouh}. Therefore, in the present paper, we focus on the diagrams $\D_1$ and $\D_3$. 

\subsection{Some classical definitions and results of convex geometry} 
Before stating the proofs, we recall some classical definitions and results of convex geometry that we use in our proofs. Since in the present paper we are interested in the planar case, the definitions and propositions are stated in dimension $2$, for a more general presentation and other results, we refer the reader to \cite{schneider}.  

\begin{definition}\label{def:minkowski_sums}
Let $K$ and $Q$ be two subsets of $\mathbb{R}^2$ and $t> 0$, we define
\begin{itemize}
    \item the Minkowski sum of the sets $K$ and $Q$ by $$K+ Q:= \{x+y\ |\ x\in K\ \text{and}\ y\in Q\},$$
    \item and the dilatation of the set $K$ by the positive coefficient $t$ by $$t K:= \{t x\ |\ x\in K\}.$$ 
\end{itemize}
\end{definition}

\begin{theorem}[Steiner formula \cite{steiner}]\label{prop:steiner}
Let $\Om$ be a planar convex body and $B_1$ a disk of unit radius. We have 
$$A(\Om+t B_1) = A(\Om)+ t P(\Om)+ \pi t^2.$$
\end{theorem}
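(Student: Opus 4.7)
The plan is to prove the Steiner formula first for convex polygons by an explicit geometric decomposition of the parallel body, then extend to arbitrary planar convex bodies by approximation in the Hausdorff distance.

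For the polygonal case, let $P$ be a convex polygon with consecutive vertices $v_1,\dots,v_n$ and corresponding edges $e_1,\dots,e_n$. The key observation is that $P+tB_1$ admits a disjoint decomposition (up to measure-zero overlaps) into three families of pieces: (i) the polygon $P$ itself, contributing area $A(P)$; (ii) for each edge $e_i$, the rectangle of length $|e_i|$ and width $t$ obtained by translating $e_i$ in the direction of its outward unit normal $\nu_i$ by amounts in $[0,t]$; (iii) for each vertex $v_i$, the circular sector of radius $t$ centered at $v_i$ whose angular aperture equals the exterior angle $\alpha_i$ (the angle between $\nu_{i-1}$ and $\nu_i$). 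I would argue that every point $x\in (P+tB_1)\setminus P$ has a unique nearest point in $P$ (by convexity), and this nearest point either lies in the relative interior of an edge (placing $x$ in one of the rectangular strips) or coincides with a vertex (placing $x$ in one of the sectors). Summing contributions yields
\[
A(P+tB_1)=A(P)+t\sum_{i=1}^n|e_i|+\frac{t^2}{2}\sum_{i=1}^n\alpha_i=A(P)+tP(P)+\pi t^2,
\]
using that the exterior angles of a convex polygon sum to $2\pi$.

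To handle a general $\Omega\in\K$, I would approximate $\Omega$ by a sequence of convex polygons $(P_n)$ converging to $\Omega$ in the Hausdorff distance. Standard facts from convex geometry then yield: (a) continuity of perimeter and area on $\K$ with respect to Hausdorff convergence (which can be proved from the fact that $P(\Omega)$ and $A(\Omega)$ are continuous functionals on the space of convex bodies, or directly from Cauchy's formula for the perimeter and dominated convergence for the area); and (b) continuity of the Minkowski sum, namely $P_n+tB_1\to \Omega+tB_1$ in Hausdorff distance, which follows immediately from the identity $d_H(A_1+C,A_2+C)\le d_H(A_1,A_2)$. Applying the polygonal identity to each $P_n$ and passing to the limit gives the formula for $\Omega$.

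The main technical point—and the step I expect to require the most care—is the rigorous verification of the disjoint decomposition in the polygonal case, specifically showing that the map which sends a point $x\in P+tB_1$ to its nearest point in $P$ is well defined (uniqueness of the projection onto a convex set) and that its fibers correspond exactly to the strips and sectors described above, with no double counting across adjacent pieces. Once this partition is justified, the rest of the proof is essentially bookkeeping: the edge-strips contribute $tP(P)$ by summing lengths, the vertex-sectors contribute $\pi t^2$ because the exterior angles close up to a full turn, and the extension to general convex bodies is a routine continuity argument.
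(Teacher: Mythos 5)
Your proposal is correct, but note that the paper does not prove this statement at all: Theorem \ref{prop:steiner} is quoted as a classical result with a citation to the literature, so there is no internal proof to compare against. What you have written is essentially the standard textbook argument (the one found in references such as \cite{schneider}), and it is sound: the nearest-point projection onto a convex set is single-valued, the fibers over edge interiors and vertices give exactly the rectangular strips and circular sectors, the exterior angles sum to $2\pi$, and the extension to general $\Om\in\K$ follows from density of convex polygons in the Hausdorff metric together with continuity of $A$, $P$, and the Minkowski sum. Two small points deserve explicit mention in a fully rigorous write-up: first, the identity $P+tB_1=\{x\ |\ \operatorname{dist}(x,P)\leq t\}$ (with $B_1$ the \emph{closed} unit disk), which is what lets you organize $(P+tB_1)\setminus P$ by nearest points at all; second, that the boundaries between adjacent strips and sectors (the normal rays through the vertices) have measure zero, so the additivity of areas is legitimate. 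Also observe that in the limiting step you apply the polygonal identity for each \emph{fixed} $t>0$ and use that $A(P_n)\to A(\Om)$, $P(P_n)\to P(\Om)$, and $A(P_n+tB_1)\to A(\Om+tB_1)$; since all three limits exist, the quadratic identity in $t$ passes to the limit with no further uniformity needed. With those details filled in, your argument is a complete and correct proof of the statement the paper takes as known.
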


The following result is classical and holds for any dimension. It can be found for example in \cite[Theorem 7.1.1]{schneider}.  
\begin{theorem}[Brunn--Minkowski inequality]\label{th:brunn}
Let $\Om$ and $\Om'$ be two elements of $\K$ and $t\in [0,1]$. We have 
$$A(t\Om +(1-t)\Om')^{1/2} \ge tA(\Om)^{1/2} +(1-t)A(\Om')^{1/2},$$
with equality if and only if $\Om$ and $\Om'$ are homothetic. 
\end{theorem}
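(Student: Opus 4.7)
The plan is to prove Theorem \ref{th:main_2} in four steps: (a) the infimum and supremum defining $f(x)$ and $g(x)$ are attained; (b) the diagram $\D_3$ is vertically convex, which together with (a) yields the claimed description of $\D_3$ as the region between the graphs of $f$ and $g$; (c) $f$ and $g$ are continuous; (d) $f$ is strictly increasing. Non-emptiness of $\K_x := \{\Omega \in \K : A(\Omega)=1,\, d(\Omega)=x\}$ on the whole domain $[2/\sqrt{\pi}, +\infty)$ has already been checked via the stadium family.

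For (a), I would take a minimizing (resp.\ maximizing) sequence for $\lambda_1$ in $\K_x$; after translation all members sit in a ball of diameter $x$, so Blaschke's selection theorem produces a Hausdorff limit $\Omega^* \in \K$. The limit cannot degenerate, since $A$ is continuous and equal to $1$ along the sequence, and $A$, $d$, $\lambda_1$ are all continuous on $\K$ with respect to Hausdorff convergence, hence $\Omega^* \in \K_x$ realizes the extremum. Applied to sequences $x_n \to x$, the same argument yields the easy halves of the continuity in (c), namely the lower semicontinuity of $f$ and the upper semicontinuity of $g$.

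For (b), given $\Omega_0, \Omega_1 \in \K_x$, I would apply rigid motions so that both bodies have the horizontal segment $I_x := [(-x/2,0), (x/2,0)]$ as a diameter. The Minkowski combination $\Omega_t := (1-t)\Omega_0 + t\Omega_1$ satisfies $d(\Omega_t) \leq x$ by subadditivity of the diameter, while $(\pm x/2, 0) \in \Omega_t$ forces $d(\Omega_t) \geq x$, hence $d(\Omega_t) = x$; Theorem \ref{th:brunn} yields $A(\Omega_t) \geq 1$. Setting $\alpha_t := 1/A(\Omega_t) \in (0,1]$ and applying the vertical contraction $T_{\alpha_t}(u,v) := (u, \alpha_t v)$, the body $\tilde\Omega_t := T_{\alpha_t}\Omega_t$ still contains $I_x$ and has all vertical distances contracted, so $A(\tilde\Omega_t) = 1$ and $d(\tilde\Omega_t) = x$. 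The path $t \mapsto \tilde\Omega_t$ is continuous in Hausdorff distance, hence so is $t \mapsto \lambda_1(\tilde\Omega_t)$, and applying the intermediate value theorem between a minimizer and a maximizer at abscissa $x$ fills the entire interval $[f(x), g(x)]$, giving both vertical convexity and the claimed description of $\D_3$. The remaining halves of continuity in (c)—upper semicontinuity of $f$ and lower semicontinuity of $g$—follow from small Minkowski perturbations of an extremal set $\Omega^*$ in the direction of its diameter (adding a short parallel segment), followed by the contraction trick above to match any nearby diameter exactly, with $\lambda_1$ staying close to $\lambda_1(\Omega^*)$ by continuity.

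Step (d), strict monotonicity of $f$, is the main obstacle and requires more than these soft tools. The case $x = 2/\sqrt{\pi}$ is immediate: the disk is the unique element of $\K_{2/\sqrt{\pi}}$ by the isodiametric inequality, and Faber--Krahn then forces $f(2/\sqrt{\pi}) < f(x)$ for every $x > 2/\sqrt{\pi}$. For general $x > 2/\sqrt{\pi}$, starting from a minimizer $\Omega^*$ one must construct an admissible convex perturbation $\Omega^*_\varepsilon$ with $A(\Omega^*_\varepsilon) = 1$, $d(\Omega^*_\varepsilon) < x$, and $\lambda_1(\Omega^*_\varepsilon) < \lambda_1(\Omega^*)$: morally, one enlarges $\Omega^*$ transverse to its diameter (which strictly decreases $\lambda_1$ by domain monotonicity) and then contracts to restore unit area (which strictly decreases the diameter). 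The delicate point is to verify that the first-order variations of $A$, $d$, and $\lambda_1$ have consistent signs along a perturbation that preserves convexity; this is precisely the content of the perturbation lemma (Lemma \ref{lem:perturbation}), and is also why the same method does not give strict monotonicity of $g$, since for $g$ one would need a perturbation that strictly increases $\lambda_1$ while decreasing $d$ under the convexity constraint—a much harder object to produce locally.
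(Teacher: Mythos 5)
Your proposal does not prove the statement it was assigned. The statement is Theorem \ref{th:brunn}, the Brunn--Minkowski inequality $A(t\Om+(1-t)\Om')^{1/2}\ge tA(\Om)^{1/2}+(1-t)A(\Om')^{1/2}$ together with its equality case (equality if and only if $\Om$ and $\Om'$ are homothetic). What you have written is instead a proof sketch of Theorem \ref{th:main_2}, the description of the diagram $\D_3$ --- and in step (b) of that sketch you explicitly \emph{invoke} Theorem \ref{th:brunn} to get $A(\Omega_t)\ge 1$ along the Minkowski path. So with respect to the assigned statement the attempt is vacuous: nowhere do you establish the concavity of $A^{1/2}$ under Minkowski combinations, and nowhere do you touch the equality case, which is the genuinely delicate part (one must show that equality forces homothety, not merely that it holds for homothetic pairs). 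None of the soft tools you deploy (Blaschke selection, continuity of $A$, $d$, $\lambda_1$ in Hausdorff distance, intermediate value theorem) can substitute for this: Brunn--Minkowski is an input to such compactness-and-path arguments, not an output of them. For the record, the paper does not prove this theorem either; it quotes it as a classical result with the reference \cite[Theorem 7.1.1]{schneider}. An actual proof in the planar setting would proceed, for instance, by reduction to the inequality $A(K+L)\ge A(K)+2\sqrt{A(K)A(L)}+A(L)$ for convex bodies (via approximation by polygons or the Pr\'ekopa--Leindler inequality), with a separate argument identifying the equality case.

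As a side remark: read as a blind attempt at Theorem \ref{th:main_2}, your sketch is interesting and in part diverges from the paper, which proves vertical convexity of $\D_3$ by a contradiction argument using winding numbers of closed curves built from Minkowski paths (following \cite{ftouh}), whereas your step (b) proposes a direct construction --- Minkowski interpolation followed by the vertical contraction $T_{\alpha_t}$ to restore both the area and the diameter constraints exactly, then the intermediate value theorem. If the details are checked (in particular that $T_{\alpha_t}$, being $1$-Lipschitz, cannot increase the diameter while the pinned diameter segment $I_x$ keeps it at $x$), this would be a simpler route to simple connectedness. But that observation does not repair the central defect: the proposal proves the wrong theorem and assumes the right one.
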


\subsection{The diagram \texorpdfstring{$\D_3$}{D3} }\label{ss:proof_dla}
Unlike for $\D_1$, the diagram $\D_3$ does not involve two functionals that are linear with respect to the Minkowski sums and dilatation. This makes the strategy used in Section \ref{ss:proof_pad} impossible to adopt. The idea here is to adapt the strategy introduced and developed in \cite{ftouh} for the diagram $\D_2$ involving the triplet $(P,\lambda_1,A)$ to the case of the triplet $(d,\lambda_1,A)$. 

\subsubsection{A perturbation Lemma}
Before presenting the proof of Theorem \ref{th:main_2} et us state and prove a perturbation lemma corresponding to the functionals studied in the diagram $\D_3$. 

\begin{lemma}\label{lem:perturbation}
(Perturbation Lemma) We denote by $\K_1$ the class of planar convex bodies of unit area endowed with the Hausdorff distance. We have 
\begin{enumerate}
    \item the disk is the only local minimizer of the diameter in $\K_1$.
    \item There is no local maximizer of the diameter in $\K_1$.
    \item The disk is the only local minimizer of $\lambda_1$ in $\K_1$.
\end{enumerate}
\end{lemma}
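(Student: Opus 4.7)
The plan is to derive a contradiction in each of the three assertions by exhibiting, around a hypothetical counterexample $\Omega\in\K_1$, a continuous one-parameter family in $\K_1$ converging to $\Omega$ in the Hausdorff sense along which the target functional strictly decreases (resp.\ strictly increases, for part 2). For parts 1 and 3 the perturbation I consider is the renormalized Minkowski convex combination with the unit-area disk $D$,
$$\Omega_t := \frac{(1-t)\Omega + tD}{\sqrt{A((1-t)\Omega + tD)}},\qquad t\in(0,1],$$
while part 2 uses instead an area-preserving affine stretch along the diameter direction.

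For part 1, the Brunn--Minkowski inequality (Theorem~\ref{th:brunn}) yields $A((1-t)\Omega+tD)^{1/2}\ge (1-t)A(\Omega)^{1/2}+tA(D)^{1/2}=1$, while the sub-additivity of the diameter with respect to Minkowski sums gives $d((1-t)\Omega+tD)\le (1-t)d(\Omega)+td(D)$. Dividing the second inequality by the square root from the first and invoking the strict isodiametric inequality $d(D)<d(\Omega)$ (which follows from $\Omega\ne D$), I obtain $d(\Omega_t)<d(\Omega)$ for every $t\in(0,1]$; since $\Omega_t\to\Omega$ in Hausdorff distance as $t\to 0^+$, this contradicts the local minimality of $\Omega$.

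For part 2, let $[M_1,M_2]$ realize the diameter of $\Omega$ and, after a rotation, place this segment on the horizontal axis. The linear map $T_t(x,y):=((1+t)x,\,y/(1+t))$ has unit Jacobian, so $T_t(\Omega)\in\K_1$, while $|T_t(M_1)-T_t(M_2)|=(1+t)d(\Omega)>d(\Omega)$ gives $d(T_t(\Omega))>d(\Omega)$. Since $T_t(\Omega)\to\Omega$ in Hausdorff distance as $t\to 0^+$, no $\Omega\in\K_1$ can be a local maximizer.

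For part 3, the same family $\Omega_t$ is used, with the role of sub-additivity of $d$ replaced by the Brunn--Minkowski inequality for the first Dirichlet eigenvalue (Brascamp--Lieb),
$$\lambda_1((1-t)\Omega+tD)^{-1/2}\ge (1-t)\lambda_1(\Omega)^{-1/2}+t\lambda_1(D)^{-1/2},$$
which is strict whenever $\Omega$ is not homothetic to $D$. The scale invariance of the product $\lambda_1\cdot A$ yields the identity $\lambda_1(\Omega_t)=A((1-t)\Omega+tD)\cdot\lambda_1((1-t)\Omega+tD)$, and the goal is to combine this with the strict Faber--Krahn inequality $\lambda_1(D)<\lambda_1(\Omega)$ and the explicit quadratic expression $A((1-t)\Omega+tD)=1+2t(1-t)(V(\Omega,D)-1)$, where $V(\Omega,D)$ is the mixed area, to deduce $\lambda_1(\Omega_t)<\lambda_1(\Omega)$ for $t$ small enough. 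The main obstacle is that, in contrast with the Minkowski-linear perimeter used in \cite{ftouh} for the analogous assertion concerning $\D_2$, the normalization factor $A((1-t)\Omega+tD)\ge 1$ can in principle offset the strict decrease of $\lambda_1$ along the Minkowski path; reconciling these two effects, using the strict form of both Brunn--Minkowski and Faber--Krahn, is the technical heart of the argument.
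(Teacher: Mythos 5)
Your parts 1 and 2 are correct but follow genuinely different routes from the paper. For part 1 the paper perturbs by $\Om_t=(\Om+tB_1)/\sqrt{A(\Om+tB_1)}$, expands the area with Steiner's formula (Theorem \ref{prop:steiner}) and concludes from the first-order coefficient via the inequality $4A(\Om)<P(\Om)d(\Om)$ for non-disks; your convex combination $(1-t)\Om+tD$ with the unit-area disk, combined with Brunn--Minkowski (Theorem \ref{th:brunn}), the exact additivity of the diameter under Minkowski addition of a disk, and the strict isodiametric inequality, gives $d(\Om_t)<d(\Om)$ for \emph{every} $t\in(0,1]$ without any asymptotic expansion, which is arguably cleaner. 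For part 2 the paper truncates $\Om$ by two horizontal half-planes shrinking toward the diameter line (keeping $d$ fixed, strictly decreasing $A$, then rescaling), whereas your unimodular stretch $T_t(x,y)=((1+t)x,y/(1+t))$ preserves the area exactly and visibly increases the distance between the two diameter endpoints; both are valid, and yours avoids having to check that the truncation strictly decreases the area while preserving the diameter.

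Part 3 is where your proposal has a genuine gap, and the obstacle you flag at the end is not merely technical: it cannot be closed with the tools you invoke. The paper does not prove this assertion at all; it cites \cite[Lemma 3.5]{ftouh}. Your strategy bounds $\lambda_1(\Om_t)=A((1-t)\Om+tD)\,\lambda_1((1-t)\Om+tD)$ from above using the Brascamp--Lieb concavity of $\lambda_1^{-1/2}$ together with Faber--Krahn and Minkowski's first inequality. Differentiating that upper bound at $t=0$ gives the sign of $V(\Om,D)-\sqrt{\lambda_1(\Om)/\lambda_1(D)}$, and since $V(\Om,D)=P(\Om)/(2\sqrt{\pi})$ and $\lambda_1(D)=\pi j_{0,1}^2$, the upper bound decreases to first order if and only if $\lambda_1(\Om)>\tfrac{j_{0,1}^2}{4}P(\Om)^2$. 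This fails for thin triangles of unit area: by the sharpness of Makai's inequality \eqref{eq:inequalities_polya_makai} one has $\lambda_1(\Om)\sim\tfrac{\pi^2}{16}P(\Om)^2$ along such a family, and $\tfrac{\pi^2}{16}\approx 0.62<\tfrac{j_{0,1}^2}{4}\approx 1.45$. So for these sets the area gain from the normalization dominates the eigenvalue decrease guaranteed by Brascamp--Lieb, the upper bound \emph{increases} to first order, and your argument is inconclusive precisely for a class of non-disks. A correct proof requires a different mechanism (the cited lemma in \cite{ftouh} does not proceed this way); as written, assertion 3 is not established.
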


\begin{proof}
\begin{enumerate}
    \item In order to prove that the disk is the only local minimizer of the diameter in the class  $\K_1$, we exhibit a small perturbation which consists in adding a small disk (in the Minkowski sense) to the set and then re-scaling in order preserve the area constraint. We then show that this process allows to strictly decrease the diameter locally when the set is not a disk.  
    
    By the isodiametric inequality, the disk is the only global minimizer of the diameter in $\K_1$ (it is then a local minimizer). We denote by $B_1$ a disk of radius $1$. Let us consider $\Om\in\K_1$ different from the disk. For every $t\ge 0$, we consider 
    $$\Om_t:= \frac{\Om+t B_1}{\sqrt{A(\Om+t B_1)}}\in \K_1.$$
   The sequence $(\Om_t)$ converges to $\Om$ with respect to the Hausdorff distance  when $t$ goes to $0$ and we have
    \begin{eqnarray*}
d(\Om_t) &=& d\left(\frac{\Om+t B_1}{\sqrt{A(\Om+t B_1)}}\right)\\
& =& \frac{d(\Om)+2t}{\sqrt{A(\Om+t B_1)}}\\
& =& \frac{d(\Om)+2t}{\sqrt{A(\Om)+P(\Om)t+\pi t^2}}\\
& =& \frac{d(\Om)+2t}{\sqrt{1+P(\Om)t+{o}(t)}}\\
& =& d(\Om) +\frac{1}{2}\big(4-P(\Om)d(\Om)\big)t+{o}(t). 
\end{eqnarray*}
Since $\Om$ is not a disk, we have $4A(\Om)<P(\Om)d(\Om)$ (see \cite{inequalities_convex} and the references therein). Thus, $$4-P(\Om)d(\Om)<0.$$ This shows that the set $\Om$ is not a local minimizer of the diameter in $\K_1$.  

\item Let us consider $\Om\in \K_1$. Without loss of generality, we assume that a diameter of $\Om$ is colinear to the $x$-axis and contains the origin $(0,0)$. We consider $$a:=\inf\{y\ |\ \exists x\in \R,\ (x,y)\in \Om\}\ \text{and}\ b:=\sup\{y\ |\ \exists x\in \R,\ (x,y)\in \Om\},$$
and for every $t\in [0,1)$:
$$\Om_t := \frac{\Om\cap \{y\ge (1-t)a\} \cap \{y\leq (1-t)b\}}{\sqrt{A(\Om\cap \{y\ge (1-t)a\} \cap \{y\leq (1-t)b\})}}\in \K_1.$$
We have for every $t\in (0,1)$
$$d(\Om\cap \{y\ge (1-t)a\} \cap \{y\leq (1-t)b\})=d(\Om)\ \ \text{and}\ \ A(\Om\cap \{y\ge (1-t)a\} \cap \{y\leq (1-t)b\})<A(\Om)=1,$$
thus $$\forall t\in(0,1),\ \ \  d(\Om_t)>d(\Om).$$
Finally, since the sequence (of elements of $\K_1$) $(\Om_t)_{t\in(0,1)}$ converges with respect to the Hausdorff distance to $\Om$ when $t\rightarrow 0^+$, the domain $\Om$ is not a local maximizer of the diameter in $\K_1$. 
\item The last assertion is stated and proved in \cite[Lemma 3.5]{ftouh}. 
\end{enumerate}
\end{proof}

Now, that a perturbation lemma is proved, we will apply the same strategy in \cite{ftouh} by replacing the perimeter by the diameter. We note that as in the proof of \cite[Theorem 3.9]{ftouh}, the monotonicity of the function $f$ is a consequence of the third assertion of Lemma \ref{lem:perturbation}. 

\subsubsection{Elements of proof of Theorem \ref{th:main_2}}\label{ss:proof_D3}\color{black}

The description of the diagram is done in three steps: 
\begin{itemize}
    \item First, we note that the diagram $\D_3$ is closed. This follows directly from the Blaschke Selection Theorem (see for example \cite[Theorem 1.8.7]{schneider}) and the continuity of the involved functionals.
    \item first, we focus on the study of its boundary and prove that it is given by the union of the graphs of two continuous functions $f\leq g$. We prove, in particular, that $f$ is strictly increasing. The monotonicity of $g$ seems numerically to hold, nevertheless, we are not able to prove it as our strategy requires to prove that there is no local maximizer of the first Dirichlet eigenvalue among convex sets of given measure. 
    \item Then, we show that the diagram contains no holes and thus exactly corresponds to the set of points located between the two curves. 
\end{itemize}

\textit{\textbf{Closedness of the diagram:}} Using the Blaschke selection Theorem (see, for example \cite[Theorem 1.8.7]{schneider}) and the continuity of $d$, $\lambda_1$ and $A$ with respect to the Hausdorff distance, it is straightforward by reproducing the same steps as the proofs of \cite[Proposition 3.2 and Corollary 3.3]{ftouh} to prove that the set $\D_3$ is closed. 

\textit{\textbf{Continuity:}} We start by proving the continuity of $f$. Let $d_0\in [d(B),+\infty)$. Since the diagram $\D_3$ is closed, for every $d\in [d(B),+\infty)$,  there exists $\Om_d$ solution of the following minimization problem: $$\min \left\{ \lambda_1(\Omega)\ |\ \Omega\in \K_1\ \text{and}\ d(\Omega)= d\ \right\}.$$ 

\begin{itemize}
\item Let us show an \textit{inferior limit inequality}. Let $(d_n)_{n\ge 1}$ be a real sequence converging to $d_0$ such that $$\liminf\limits_{p\rightarrow d_0} \lambda_1(\Om_d)=\lim\limits_{n\rightarrow +\infty} \lambda_1(\Omega_{d_n}).$$
Up to translations, as the diameters of $(\Om_{d_{n}})_{n\in\N^*}$ are uniformly bounded, one may assume that the domains $(\Omega_{d_n})_{n\in \mathbb{N}^*}$ are included in a fixed disk: then by Blaschke's selection Theorem, $(\Omega_{d_n})$ converges to a convex set $\Om^*$ with respect to the Hausdorff distance, up to a subsequence that we also denote by $d_{n}$ for simplicity.

By the continuity of the diameter, the area and $\lambda_1$ with respect to the Hausdorff distance among convex sets, we have
$$
\left \{
\begin{array}{l}
    |\Omega^*| = \lim\limits_{n\rightarrow+\infty}  |\Omega_{d_n}| =1,\\[3mm]
    d(\Omega^*)=  \lim\limits_{n\rightarrow+\infty}  d(\Omega_{d_n}) = \lim\limits_{n\rightarrow+\infty}  d_n=d_0,\\[3mm]
    \lambda_1(\Omega^*)=\lim\limits_{n\rightarrow +\infty} \lambda_1(\Omega_{d_n})=\liminf\limits_{p\rightarrow d_0} \lambda_1(\Om_d).
\end{array}
\right.
    $$  
    
 Then by definition of $f$ (since $\Omega^*\in \K_1$ and $d(\Omega^*)=d_0$), we obtain

$$f(d_0)\leq\ \lambda_1(\Omega^*)=\lim\limits_{n\rightarrow +\infty} \lambda_1(\Omega_{d_n})=\liminf\limits_{d\rightarrow d_0} \lambda_1(\Om_d)=\liminf\limits_{d\rightarrow d_0} f(d).$$

\item It remains to prove \textbf{a superior limit inequality}. Let $(d_n)_{n\ge1}$ be a real sequence converging to $d_0$ such that
$$\limsup\limits_{d\rightarrow d_0} f(d) = \lim\limits_{n\rightarrow +\infty} f(d_n).$$
By Lemma \ref{lem:perturbation}, there exists a sequence $(K_n)_{n\ge1}$ of $\K_1$ converging to $\Omega_{d_0}$ for the Hausdorff distance, and such that $d(K_n)=d_n$ for every $n\in \mathbb{N}^*$.

Using the definition of $f$, one can write $$\forall n\in \mathbb{N}^*,\ \ \ \ f(d_n)\leq \lambda_1(K_n).$$

Passing to the limit, we get 
$$\limsup\limits_{p\rightarrow d_0} f(d) = \lim\limits_{n\rightarrow +\infty} f(d_n)\leq \lim\limits_{n\rightarrow +\infty} \lambda_1(K_n)=\lambda_1(\Omega_{d_0})=f(d_0).$$
\end{itemize}
As a consequence, we finally get $\lim\limits_{d\rightarrow d_0} f(d) = f(d_0)$, so $f$ is continuous on $[d(B),+\infty)$. 

The same method can be applied to prove the continuity of $g$.

\textit{\textbf{Monotonicity:}}  We now prove that \textit{$f$ is strictly increasing}. Let us assume by contradiction that this is not the case. Then, by the continuity of $f$ and the fact that $\lim\limits_{+\infty} f=+\infty$ (see \eqref{eq:equivalence-1}), we deduce the existence of a local minimum of $f$ at a point $d_{0}> d(B)$. Therefore, there exists $\Om^*\in \K_1$ and $\eps>0$ such that
$$d(\Om^*)=d_{0}\;\;\;\textrm{ and }\ \ \ \forall d\in(d_{0}-\eps,d_{0}+\eps), \;\;\lambda_{1}(\Om^*)=f(d_{0})\leq f(d),$$
which implies
$$\forall \Om\in\K_{1}\textrm{ such that }d(\Om)\in(d_{0}-\eps,d_{0}+\eps), \;\;\lambda_{1}(\Om^*)\leq \lambda_{1}(\Om).$$
Because of the continuity of the diameter in $\K_{1}$, this would imply that $\Om^*$ is a local minimum (for the Hausdorff distance) of $\lambda_{1}$ in $\K_{1}$, which, from the third assertion of Lemma \ref{lem:perturbation} implies that $\Om^*$ must be a disk, contradicting $d(\Om^*)>d(B)$.

\textit{\textbf{Simple connectedness of the diagram:}}
The simple connectedness of the diagram $\D_3$ follows the same steps as the proof of simple connectedness of $\D_2$ proved in \cite{ftouh}. As the proof is much involved but very similar to the case treated in \cite{ftouh}, we limit ourself to describing the main steps of the proof: 
\begin{itemize}
    \item First, we assume by contradiction that the diagram $\D_3$ is not simply connected. Therefore, there exists a point $Q(x_Q,y_Q)$ such that 
    $$x_Q>d(B)\ \ \ \ \ \text{and}\ \ \ \ \ f(x_Q)<y_Q<g(x_Q).$$
    Since $\D_3$ is closed, we deduce that there exists $r>0$ such that the open disk $B(x_Q,r)$ is not included in $\D_3$. 
    \item The second step is to construct and study the properties of relevant continuous paths inside $\D_3$ constructed via Minkowski sums of convex sets. More precisely, for every $\Om_1,\Om_2\in\K_1$ such that $d(\Om_1) = d(\Om_2)$ and $\lambda_1(\Om_1)<\lambda_1(\Om_2)$, we introduce the path $$\C_{\Om_1,\Om_2}:=\left\{\frac{1}{\sqrt{A(t\Om_1+(1-t)\Om_2)}}(t\Om_1+(1-t)\Om_2)\ \Big|\ t\in[0,1]\right\}\subset \D_3.$$
    By using Brunn--Minkowski inequality (see for example \cite[Theorem 7.1.1]{schneider}) for the area, we have 
    $$\forall t\in[0,1],\ \ \ A(t\Om_1+(1-t)\Om_2)^{1/2}\ge tA(\Om_1)^{1/2}+(1-t)A(\Om_2)^{1/2} = 1.$$
    On the other hand, since the involved shape functionals are invariant for rigid motions of the sets, we can assume without loss of generality that the diameters of the sets $\Om_1$ and $\Om_2$ are colinear, leading to 
    $$\forall t\in[0,1],\ \ \ d(t\Om_1+(1-t)\Om_2) = td(\Om_1)+(1-t)d(\Om_2).$$
    We then conclude that 
    $$\forall t\in[0,1],\ \ \ d\left(\frac{1}{\sqrt{A(t\Om_1+(1-t)\Om_2)}}(t\Om_1+(1-t)\Om_2)\right)\leq d(\Om_1) = d(\Om_2),$$
    showing that the arc (which belongs to $\D_3$) passes through the left side of the (vertical) segment $\mathcal{S}_{\Om_1,\Om_2}:= \{(d(\Om_1),y)\ |\ y\in [\lambda_1(\Om_1),\lambda_1(\Om_2)]\}$. 
    
    To be able to use the winding number tools, we are going to consider closed paths $\Gamma_{\Om_1,\Om_2}:=  \C_{\Om_1,\Om_2}\cup\mathcal{S}_{\Om_1,\Om_2}$ composed with two parts: one (real) part $\C_{\Om_1,\Om_2}$ included in the diagram $\D_3$ and one (artificial) part $\mathcal{S}_{\Om_1,\Om_2}$, that we do not claim (yet) to be included in $\D_3$, added to obtain a closed curve. 
    \item Now, that we have constructed closed curves $\Gamma_{\Om_1,\Om_2}$, we study their properties:
    \begin{itemize}
        \item We first prove that they are stable under perturbations of the domains $\Om_1$ and $\Om_2$ in the sense that if $(\Om_1^n)$ and $(\Om_2^n)$ are sequences of elements of $\K_1$ converging to $\Om_1$ and $\Om_2$ then we have that the sequence of paths $(\Gamma_{\Om_1^n,\Om_2^n})$ converges, with respect to the Hausdorff distance of sets, to $(\Gamma_{\Om_1,\Om_2})$.
        \item We then show some placement properties of the paths with respect to the point $Q$. We show that 
        \begin{itemize}
            \item There exists $d>d(B)$ and $\Om_1,\Om_2$ such that $d(\Om_1)=d(\Om_2) = d$ and the point $Q$ is "in the interior" of the closed curve $\Gamma_{\Om_1,\Om_2}$, where the notion of interior and exterior of a closed curve is defined via the winding number.
            \item There exists $d_0>d(B)$ such that for all $d\ge d_0$ and for all $\Om_1,\Om_2$ such that $d(\Om_1)=d(\Om_2) = d$, the point $Q$ is in the exterior of the closed curve $\Gamma_{\Om_1,\Om_2}$.
        \end{itemize}
    \end{itemize}
    \item The last step is to conclude by introducing the well defined quantity:
    $$d^* := \sup\{d\ |\ \exists \Om_1,\Om_2\in\K_1,\ \ \text{such that $d(\Om_1)=d(\Om_2)=d$ and $Q$ is in the interior of $\Gamma_{\Om_1,\Om_2}$}\},$$
    which provides a threshold between two different regimes: one where there exist paths having $Q$ in their interior and one where this is not the case. One then can find an absurdity by using the continuity of the index (i.e., winding number) of a closed curve in the same way as in the fourth step of the  proof of \cite[Theorem 3.14]{ftouh}. 
\end{itemize}
\color{black}

\subsubsection{A symmetry and regularity result}
In this section, we prove a symmetry and regularity result on the domains that fill the lower boundary of the diagram $\D_3$, i.e., those solving the problem 
\begin{equation}\label{prob:min__dla}
\min\{\lambda_1(\Om)\ |\ \text{$\Om\subset\R^2$ is convex},\  d(\Om)=d_0\ \text{and}\ A(\Om)=1\},
\end{equation}
where $d_0> \frac{2}{\sqrt{\pi}}$ (we recall that if $d_0=\frac{2}{\sqrt{\pi}}$, then, by the isodiametric inequality, the only solution of the problem is the disk). 

Our result is stated as follows:
\begin{theorem}\label{th:symmetry}
If $\Om^*$ is a solution of the problem \eqref{prob:min__dla}, with $d_0> \frac{2}{\sqrt{\pi}}$, then: 
\begin{enumerate}
    \item $\Om^*$ admits two orthogonal axes of symmetry. 
    \item Apart from two diametrically opposed points (i.e., points of the boundary of the domain that are on opposite ends of a diameter), the boundary of $\Om^*$ is (at least) $C^1$ at every point.  
\end{enumerate}
\end{theorem}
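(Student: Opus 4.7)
The plan is to address the two assertions separately: for the symmetry part, I would apply Steiner symmetrization twice, and for the regularity part, I would argue by contradiction using a first-order perturbation that exploits the vanishing of the Dirichlet eigenfunction's gradient at a convex corner.

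For the symmetry, place a diameter $[A,B]$ of $\Om^*$ along the $x$-axis with $A=(-d_0/2,0)$ and $B=(d_0/2,0)$. The technical core is to verify that Steiner symmetrizations both parallel and perpendicular to this diameter preserve the diameter itself. Denoting the vertical slices of $\Om^*$ by $\{x\}\times[y^-(x),y^+(x)]$ with width $h(x)$ and the horizontal slices by $[a(y),b(y)]\times\{y\}$ with width $L(y)$, the pairing identities
\begin{equation*}
(y^+(x_1)-y^-(x_2))+(y^+(x_2)-y^-(x_1))=h(x_1)+h(x_2),\quad(b(y_1)-a(y_2))+(b(y_2)-a(y_1))=L(y_1)+L(y_2)
\end{equation*}
yield the upper bounds $d(\mathrm{St}_x\Om^*)\leq d_0$ and $d(\mathrm{St}_y\Om^*)\leq d_0$. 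For $\mathrm{St}_x$, the points $A,B$ lie on the axis of symmetrization and therefore remain in the symmetrized set, giving the reverse inequality. For $\mathrm{St}_y$, the horizontal slice of $\Om^*$ at $y=0$ has length exactly $d_0$ (it contains $[A,B]$ and, by definition of $d_0$, no horizontal slice can exceed $d_0$), so after centering it produces two points at distance $d_0$ and the reverse inequality follows. Area is trivially preserved, and by the Polya--Szego inequality $\lambda_1$ does not increase. Optimality of $\Om^*$ then forces equality in Polya--Szego, whose rigidity case on convex bodies implies that $\Om^*$ coincides with its Steiner symmetrization up to a translation orthogonal to the symmetrization axis; both translations are pinned to zero by the requirement that the diameter lie on the $x$-axis. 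Hence $\Om^*$ is symmetric about both the $x$-axis and the $y$-axis.

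For the regularity, assume for contradiction that $\partial\Om^*$ has a corner at some $P\neq A,B$ with interior angle $\alpha\in(0,\pi)$. The classical asymptotic expansion of the first Dirichlet eigenfunction near a convex corner gives $u(r,\theta)\sim r^{\pi/\alpha}\sin(\pi\theta/\alpha)$ as $r\to 0$, whence $\nabla u(P)=0$ since $\pi/\alpha>1$. I would construct a one-parameter family $(\Om_\eps)_{\eps>0}$ of convex competitors by simultaneously cutting off a small triangle of area $\eps$ at $P$ (and at each of its images under the orthogonal symmetries from the first part, to preserve the symmetry) and adding a compensating outward bulge of total area $\eps$ at a strictly convex smooth boundary point $Q$ chosen away from $\{A,B\}$ (also symmetrically replicated). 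Since $P$ and $Q$ are uniformly separated from $A$ and $B$, the diameter remains $d_0$ for $\eps$ small, and area is preserved by construction. The Hadamard shape-derivative formula then yields
\begin{equation*}
\lambda_1(\Om_\eps)-\lambda_1(\Om^*)=-\eps\,\bigl(|\nabla u(Q)|^2-|\nabla u(P)|^2\bigr)+o(\eps)=-\eps\,|\nabla u(Q)|^2+o(\eps)<0,
\end{equation*}
because $|\nabla u(P)|^2=0$ and $|\nabla u(Q)|^2>0$ by Hopf's lemma. This contradicts the optimality of $\Om^*$.

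The main obstacle is twofold. First, one must exhibit a strictly convex smooth point $Q$ on $\partial\Om^*\setminus\{A,B\}$ at which an outward bulge preserves convexity; in the hypothetical case where $\Om^*$ is polygonal, the argument has to be adapted to translating an edge outward and controlling the coupling with the adjacent vertices, which is technically more delicate. Second, the shape-derivative formula has to be justified at the singular point $P$; this is possible since for $\alpha<\pi$ the eigenfunction is locally regular enough at $P$ that $|\nabla u|$ is continuous and vanishes there, but the $O(\eps)$ expansion must be carried out carefully. The symmetries established in the first part are used throughout the second part to keep the competitor in $\K_1$ and to pair the cuts at all corners in the orbit of $P$ simultaneously.
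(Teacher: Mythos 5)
Your symmetry argument is essentially the paper's: the paper likewise places a diameter on the $x$-axis and applies Steiner symmetrization with respect to both coordinate axes, using the strict decrease of $\lambda_1$ under symmetrization of a non-symmetric convex set. Your pairing-identity verification that the diameter is preserved under both symmetrizations supplies details the paper leaves implicit, but the route is the same.

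The regularity argument is where you genuinely diverge, and where there is a gap. The paper never adds material: it removes a small cap at the corner $x_0$ (so the diameter cannot increase, and is unchanged because a diametral pair away from $x_0$ survives), invokes the cap-cutting lemma of Henrot to get $|\Om_\eps|\lambda_1(\Om_\eps)<\lambda_1(\Om^*)$ together with $|\Om_\eps|<1$, then rescales to unit area --- which strictly increases the diameter --- and contradicts the strict monotonicity of $f$ established in Theorem \ref{th:main_2}. Your cut-and-compensate scheme instead requires an outward bulge at a boundary point $Q$, and this is problematic on two counts. First, you need $Q$ to be a smooth, strictly convex point that is not the endpoint of \emph{any} diameter; otherwise the bulge, however small, pushes the diameter above $d_0$. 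The existence of such a $Q$ is not established: a priori $\Om^*$ could have a large set of diametral endpoints, or be polygonal, cases you mention as ``obstacles'' but do not resolve, whereas the paper's removal-only perturbation makes the question moot. Second, the first-order Hadamard expansion at the singular point $P$ is precisely the delicate content of the lemma the paper cites; the corner asymptotics giving $\nabla u(P)=0$ are correct, but converting this into a valid $o(\eps)$ estimate for a non-smooth, non-vector-field perturbation is the technical heart of the step and cannot simply be asserted. In short, your approach would work only after supplying the existence of an admissible $Q$ and the rigorous expansion at $P$; the paper's combination of cap removal, rescaling, and the strict monotonicity of $f$ avoids both difficulties.
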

\begin{proof}
Let us prove each assertion:
\begin{enumerate}\color{black}
    \item Without loss of generality, we assume that there is a diameter of $\Om^*$ which is colinear to the $x$-axis and the origin $(0,0)$ is its midpoint. If we assume that the $x$-axis is not an axis of symmetry of $\Om^*$, then by performing Steiner symmetrization (see for example \cite[Section 2.2]{henrot}) on $\Om^*$ with respect to the $x$-axis, we obtain a set $\Om'$ which is symmetric with respect to the $x$-axis and such that 
    $$\lambda_1(\Om^*)<\lambda_1(\Om'),\ \ \  A(\Om^*)=A(\Om')\ \ \text{and}\ \ \ d(\Om^*)=d(\Om')=d_0,$$
    which contradicts the fact that $\Om^*$ is a solution of Problem \eqref{prob:min__dla}. 

    By repeating the same arguments for the $y$-axis, we deduce that the optimal set $\Om^*$ is also symmetric with respect to the $y$-axis. 
    
    \color{black}
   \item We recall that the lower boundary of the diagram $\D_3$ is given by the graph of the function $f$ defined in \eqref{eq:f} as follows:
$$
\begin{array}{cccl}
   f : &[d(B),+\infty)& \longrightarrow&\mathbb{R}\\
   & x &\longmapsto &\min\left\{\lambda_1(\Omega)\ |\ \Omega\in \mathcal{K}, A(\Om)=1\ \text{and}\ d(\Omega)= x\right\}
\end{array}
$$
    
    Let us assume, by contradiction, that $\Om^*$ has two distinct supporting lines at some point $x_0$ of its boundary, see Figure \ref{fig:cusp}, such that there exist two diametrically opposed points both different from $x_0$. By removing a small cup as it is done in \cite[Section 3.1]{henrot_ouder}, see Figure \ref{fig:cusp}, we construct a family of convex sets $(\Om_\eps)_{\eps>0}$ converging with respect to the Hausdorff distance to $\Om^*$ when $\eps$ goes to $0^+$. By reproducing the same steps of the proof of \cite[Lemma 3.3.2]{henrot}, we show that for sufficiently small values of $\eps>0$, we have:
    \begin{itemize}
        \item $|\Om_\eps|\lambda_1(\Om_\eps)<|\Om^*|\lambda_1(\Om^*)=\lambda_1(\Om^*)$.
        \item $|\Om_\eps|<|\Om^*|=1\ \ \ \ \ \text{(because of the strict inclusion $\Om_\eps\subset \Om^*$)}$.
        \item $d(\Om_\eps)=d(\Om^*)\ \ \ \ \ \text{(there exist two diametrically opposed points both different from $x_0$)}.$
    \end{itemize}

    \begin{figure}[!ht]
 \centering
    \includegraphics[scale=3]{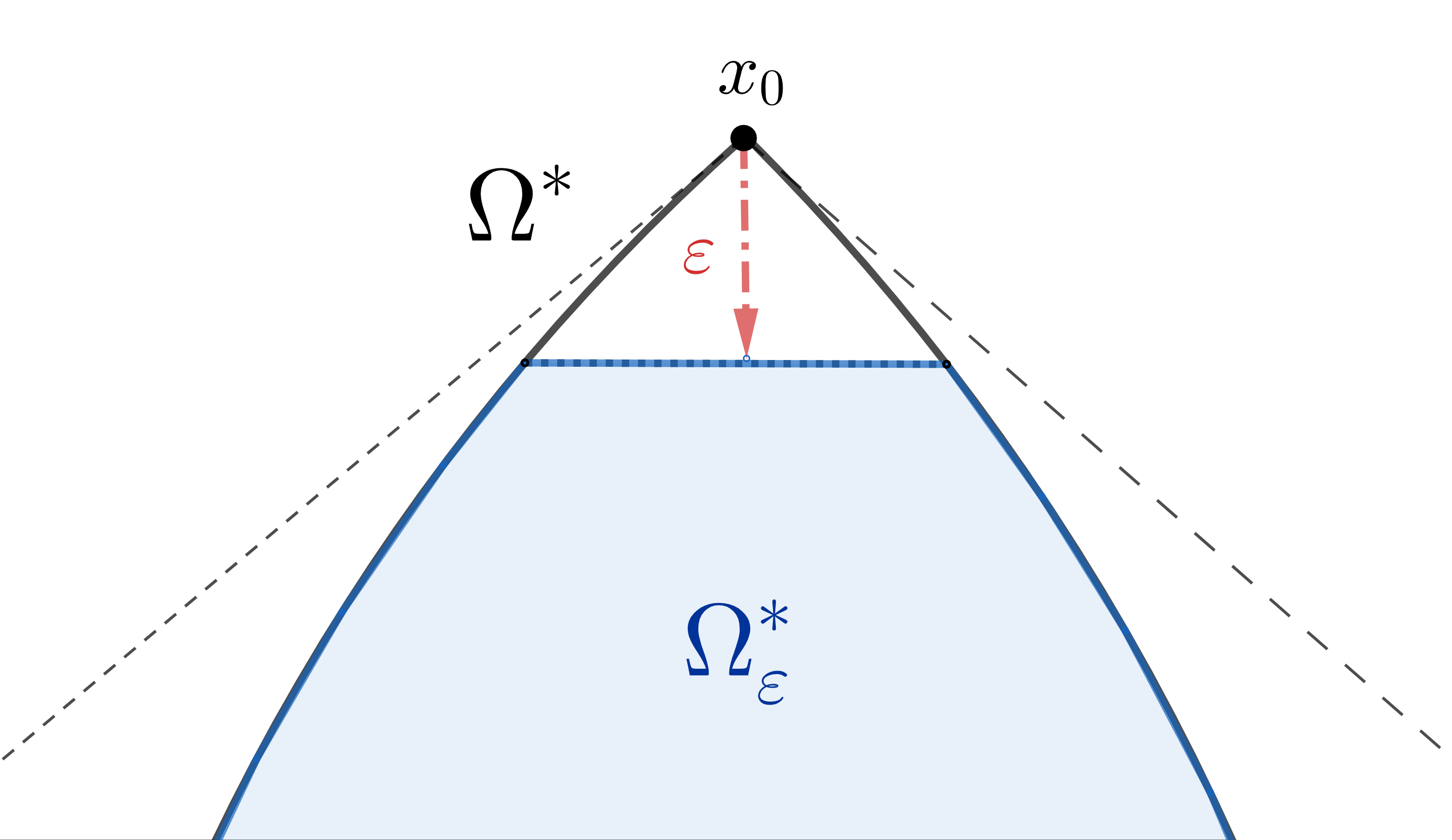}
    \caption{Removing a cap from $\Om^*$ to obtain $\Om^*_\eps$.}
    \label{fig:cusp}
\end{figure}
 
Thus, by considering the normalized family 
$\Om_\eps^*:=\frac{\Om_\eps}{\sqrt{|\Om_\eps|}}$, we  obtain a sequence of convex sets $(\Om_\eps^*)$ of unit area ($|\Om_\eps^*|=1$) converging to $\Om^*$ with respect to the Hausdorff distance such that 
$$d(\Om_\eps^*)=d\left(\frac{\Om_\eps}{\sqrt{|\Om_\eps|}}\right)=\frac{d(\Om_\eps)}{\sqrt{|\Om_\eps|}}>\frac{d(\Om^*)}{\sqrt{|\Om^*|}}=d(\Om^*),$$
and 
$$f(d(\Om_\eps^*))\leq \lambda_1(\Om_\eps^*)=\lambda_1\left(\frac{\Om_\eps}{\sqrt{|\Om_\eps|}}\right)=|\Om_\eps|\lambda_1(\Om_\eps)<\lambda_1(\Om^*) = f(d(\Om^*)).$$
Thus, we have 
$$
d(\Om_\eps^*)>d(\Om^*)\ \ \text{and}\ \  f(d(\Om_\eps^*))< f(d(\Om^*)), 
$$
which is a contradiction with the monotonicity of the function $f$ proved in Theorem \ref{th:main_2}. This ends the proof of the regularity assertion.
\end{enumerate}
\end{proof}

\subsubsection{Further results and comments}
We end this section by stating some interesting results and remarks: 
\begin{itemize}
    \item The result of monotonicity of the function $f$ is quite interesting and useful as it yields the (non-trivial) equivalence between four shape optimization problems, as stated in the following corollary:
    \begin{corollary}
 Let $d_0>d(B)=\frac{2}{\sqrt{\pi}}$. The following problems are equivalent:
 
\begin{minipage}{7cm}
\begin{enumerate}
    \item\label{item:1}  $ \min\{\lambda_1(\Omega)\ |\ \Omega \in \K_1\ \text{ and }\ d(\Omega)= d_0\}$
    \item\label{item:2}  $\min\{\lambda_1(\Omega)\ |\ \Omega \in \K_1\ \text{ and }\ d(\Omega)\ge d_0\}$
\end{enumerate}
\end{minipage}
\begin{minipage}{7cm}
\begin{enumerate}
  \setcounter{enumi}{2}
    \item\label{item:3} $\max\{d(\Omega)\ |\ \Omega \in \K_1\ \text{ and }\ \lambda_1(\Omega)=f(d_0)\}$
    \item\label{item:4} $\max\{d(\Omega)\ |\ \Omega \in \K_1\ \text{ and }\ \lambda_1(\Omega)\leq f(d_0)\}$,
\end{enumerate}
\end{minipage}\\[2mm]  
and, any solution satisfies the symmetry and regularity properties stated in Theorem \ref{th:symmetry}.
    \end{corollary}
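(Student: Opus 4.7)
The plan is to derive the four equivalences as immediate consequences of the strict monotonicity of $f$ proved in Theorem \ref{th:main_2}, combined with the defining inequality $\lambda_1(\Omega)\geq f(d(\Omega))$ valid for every $\Omega\in\K_1$. By ``equivalence'' I mean that the optimal values and the sets of optimizers coincide across the four problems; I will verify that the common optimal values are $f(d_0)$ for (1)--(2) and $d_0$ for (3)--(4), and that all four problems share the same set of optimizers as (1).

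First I would treat the pair (1)$\Leftrightarrow$(2). Problem (2) is a relaxation of (1), so its infimum is at most $f(d_0)$; conversely, for any $\Omega$ admissible in (2) one has $\lambda_1(\Omega)\geq f(d(\Omega))\geq f(d_0)$ by the definition of $f$ and by monotonicity, so the two infima agree. Moreover, an optimizer $\Omega^*$ of (2) realizes equality throughout, and strict monotonicity of $f$ then forces $d(\Omega^*)=d_0$, so the sets of minimizers coincide. The pair (3)$\Leftrightarrow$(4) is handled symmetrically: for any $\Omega$ admissible in (4) the chain $f(d(\Omega))\leq\lambda_1(\Omega)\leq f(d_0)$ together with the strict monotonicity of $f$ yields $d(\Omega)\leq d_0$, so the supremum in (4) is at most $d_0$ and is attained exactly at sets with $d(\Omega)=d_0$ and $\lambda_1(\Omega)=f(d_0)$, which are precisely the admissible sets for (3) achieving the value $d_0$.

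The cross-equivalence (1)$\Leftrightarrow$(3) follows at once: a minimizer of (1) satisfies $d(\Omega)=d_0$ and $\lambda_1(\Omega)=f(d_0)$, hence is admissible in (3) with the extremal value $d_0$; conversely, a maximizer of (3) satisfies $\lambda_1=f(d_0)$ and, by the argument above for (4), $d=d_0$, so it solves (1). The symmetry and regularity claim then comes for free, because each of (2), (3), (4) shares its optimizer set with (1), to which Theorem \ref{th:symmetry} applies directly.

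I do not foresee any substantive obstacle: every step reduces to an application of the strict monotonicity of $f$, and the only mildly delicate point is to use \emph{strict} (rather than weak) monotonicity to pin down $d(\Omega)=d_0$ at every optimizer. Existence of an optimizer for (1) — needed so that $f(d_0)$ and $d_0$ are actually attained — is secured by the closedness of $\D_3$ already established in the proof of Theorem \ref{th:main_2}.
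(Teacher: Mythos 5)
Your proof is correct and follows essentially the route the paper intends: the paper omits the argument (referring to the identical proof of Corollary 3.13 in the cited work on the diagram $\D_2$) but explicitly attributes the equivalence to the strict monotonicity of $f$, which is precisely the engine of your chain of inequalities $f(d(\Omega))\leq\lambda_1(\Omega)$ combined with attainment via closedness of $\D_3$. Your identification of the common optimizer sets with those of problem (1), from which the symmetry and regularity claims follow by Theorem \ref{th:symmetry}, is exactly the intended conclusion.
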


The proof of this result is omitted as it is identical to the proof of \cite[Corollary 3.13]{ftouh}. 
\item We also note that the monotonicity of $f$ is also used in the proof of the regularity result of Theorem \ref{th:symmetry}. 

\item It is possible to obtain some sharp estimations on the functions $f$ and $g$ by using some classical inequalities. Indeed, we recall that by Makai's \cite{Makai} and Polya's \cite{Polya} inequalities, we have for every $\Om\in\K$
\begin{equation}\label{eq:inequalities_polya_makai}
    \frac{\pi^2}{16}\left(\frac{P(\Om)}{A(\Om)}\right)^2<\lambda_1(\Om)<\frac{\pi^2}{4}\left(\frac{P(\Om)}{A(\Om)}\right)^2.
\end{equation}
Both estimates are sharp as the lower one is asymptotically attained by any family of thin vanishing triangles and the second one is asymptotically attained by any family of thin vanishing rectangles. By using the following inequalities 
$$2 d(\Om)< P(\Om)< \frac{A(\Om)}{d(\Om)}+2 d(\Om),$$
that are both sharp as the equality is asymptotically attained by any family of thin vanishing domains, see \cite{inequalities_convex} and the references therein, we have 
\begin{equation}\label{eq:ilias_makai}
\frac{\pi^2}{4}\left(\frac{d(\Om)}{A(\Om)}\right)^2< \lambda_1(\Om)< \frac{\pi^2}{4}\left(\frac{4\sqrt{A(\Om)}}{d(\Om)}+\frac{2d(\Om)}{\sqrt{A(\Om)}}\right)^2,    
\end{equation}
where both estimates are sharp as for \eqref{eq:inequalities_polya_makai}. Finally, by using the definition of $f$ and $g$, we have the following inequalities
$$\forall x\ge d(B)=\frac{2}{\sqrt{\pi}},\ \ \ \frac{\pi^2}{4}x^2< f(x)\leq g(x) < \frac{\pi^2}{4}\left(\frac{4}{x}+2x\right)^2.$$
Moreover, by the sharpness of the estimates \ref{eq:ilias_makai}, we have the following asymptotics 
\begin{equation}\label{eq:equivalence-1}
f(x)\underset{x\rightarrow +\infty}{\sim}\frac{\pi^2}{4}x^2\ \ \ \text{and}\ \ \ g(x)\underset{x\rightarrow +\infty}{\sim}\pi^2x^2. 
\end{equation}

\end{itemize}

\subsection{The diagram \texorpdfstring{$\D_1$}{D1} }\label{ss:proof_pad}
\color{black}
\textbf{\textit{The continuity and the monotonicity:}} The proof of the continuity and the monotonicity of the functions $\phi$ and $\psi$ defined in \eqref{eq:psi} and \eqref{eq:phi} follows the same scheme presented in Section \ref{ss:proof_D3} for the functions $f$ and $g$ defined in \eqref{eq:f} and \eqref{eq:g}. Therefore, it only remains to prove the following perturbation lemma: 
\begin{lemma}
    We denote by $\K^{\text{diam}}_1$ the class of planar convex bodies of unit diameter endowed with the Hausdorff distance. We have 
\begin{enumerate}
    \item The sets of constant width are the only local maximizers of the perimeter in $\K^{\text{diam}}_1$.
    \item There is no local minimizer of the perimeter in $\K^{\text{diam}}_1$.
    \item Disks are the only local maximizers of the area in $\K^{\text{diam}}_1$.
    \item There is no local minimizer of the area in $\K^{\text{diam}}_1$.
\end{enumerate}
\end{lemma}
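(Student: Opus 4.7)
I would handle assertions 1 and 3 with a Minkowski-sum perturbation, and assertions 2 and 4 with a cap-cutting perturbation, following exactly the philosophy of Lemma \ref{lem:perturbation} but adapted to the diameter constraint.

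For assertions 1 and 3, fix $\Om\in\K^{\text{diam}}_1$ and define
$$\Om_t:=\frac{\Om+tB_1}{1+2t}\in \K^{\text{diam}}_1,\qquad t\ge 0,$$
which is a Hausdorff-continuous family through $\Om_0=\Om$, with diameter one for every $t$ since $d(\Om+tB_1)=d(\Om)+2t=1+2t$. Using Steiner's formula (Proposition \ref{prop:steiner}) together with the $1$-homogeneity of $P$ and the $2$-homogeneity of $A$, a direct Taylor expansion yields
$$P(\Om_t)=P(\Om)+2\bigl(\pi-P(\Om)\bigr)\,t+o(t),\qquad A(\Om_t)=A(\Om)+\bigl(P(\Om)-4A(\Om)\bigr)\,t+o(t).$$
For assertion 1, invoke the Cauchy--Barbier inequality $P(\Om)\le \pi d(\Om)=\pi$ with equality if and only if $\Om$ has constant width: if $\Om$ is not of constant width, the first-order coefficient $2(\pi-P(\Om))$ is strictly positive, so $P(\Om_t)>P(\Om)$ for small $t>0$ and $\Om$ is not a local maximizer, while every constant-width set achieves the global maximum $P=\pi$ and is therefore a local maximizer. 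For assertion 3, multiply the isoperimetric inequality $P^2\ge 4\pi A$ and the isodiametric inequality $4A\le \pi d^2$ to obtain $P(\Om)\,d(\Om)\ge 4A(\Om)$, with equality if and only if $\Om$ is a disk; on $\K^{\text{diam}}_1$ this reads $P(\Om)-4A(\Om)\ge 0$, strict whenever $\Om$ is not a disk, so $A(\Om_t)>A(\Om)$ for small $t>0$ in that case, while the disk is the global (hence local) maximizer by the isodiametric inequality.

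For assertions 2 and 4, fix $\Om\in\K^{\text{diam}}_1$ and pick a diameter pair $p,q\in\Om$ with $|p-q|=1$. Choose a boundary point $x_0\in\partial\Om\setminus\{p,q\}$ and cut off from $\Om$ a small cap near $x_0$ by a chord $[a_\eps,b_\eps]$ disjoint from $\{p,q\}$, yielding a convex body $\Om_\eps\subsetneq \Om$. For $\eps$ small enough $\{p,q\}\subset \Om_\eps$, so $d(\Om_\eps)\ge 1$; and since $\Om_\eps\subset\Om$ we also have $d(\Om_\eps)\le 1$, hence $\Om_\eps\in\K^{\text{diam}}_1$. Strict inclusion yields $A(\Om_\eps)<A(\Om)$, and by convexity the excised boundary arc is strictly longer than the chord $[a_\eps,b_\eps]$ that replaces it, so $P(\Om_\eps)<P(\Om)$. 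Since $\Om_\eps\to\Om$ in Hausdorff distance as $\eps\to 0^+$, $\Om$ is neither a local minimizer of area nor of perimeter in $\K^{\text{diam}}_1$.

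The only delicate ingredients are the classical inequalities $P\le\pi d$ and $P d\ge 4A$ together with their equality cases, which precisely isolate the extremal shapes (constant-width sets and disks) from generic convex bodies; once these are in place the first-order computations are routine and mimic those already carried out in Lemma \ref{lem:perturbation}.
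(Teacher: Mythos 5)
Your proof is correct and follows essentially the same route as the paper: assertions 1 and 3 are handled by the Minkowski perturbation $\Om+tB_1$ together with the inequalities $P\leq \pi d$ and $Pd\geq 4A$ with their equality cases, exactly as in the paper (which phrases the first-order expansion in terms of the scale-invariant ratios $P/d$ and $A/d^2$ rather than the renormalized sets, an immaterial difference). For assertions 2 and 4 you excise a small cap away from a diameter pair where the paper instead reuses the strip-compression of Lemma \ref{lem:perturbation}; both are the same diameter-preserving shrinking idea, and your variant only needs the implicit precaution that the cap be taken at an extreme point of $\partial\Om$ distinct from the diameter endpoints so that it genuinely removes positive area.
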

\begin{proof}
    \begin{enumerate}
        \item We have for every $\Om\in\K$, 
        $$\frac{P(\Om)}{d(\Om)} \leq \pi,$$
        with equality if and only if $\Om$ is a set of constant width. Let us now consider $\Om\in\K$, which is not a set of constant width, i.e., such that $\frac{P(\Om)}{d(\Om)} < \pi$ and $B_1$ the unit disk. We have for $t\ge 0$
        $$\frac{P(\Om+tB_1)}{d(\Om+tB_1)} = \frac{P(\Om)+2\pi t}{d(\Om)+2t} = \frac{P(\Om)}{d(\Om)}\cdot\left(1+\left(\frac{\pi}{P(\Om)}-\frac{1}{d(\Om)}\right)t +o(t)\right).$$
        Therefore, for sufficiently small $t>0$, we have 
        $$\frac{P(\Om+tB_1)}{d(\Om+tB_1)}>\frac{P(\Om)}{d(\Om)},$$
        which proves the first assertion.
        \item For the second assertion, we consider the same construction as in the second assertion of Lemma \ref{lem:perturbation}. 
        \item By the isodiametric inequality, disks are global maximizers of the area in $\K^{\text{diam}}_1$. If $\Om$ is not a disk then we can combine the isoperimetric inequality $4A(\Om)<\frac{P(\Om)^2}{\pi}$ with the inequality $P(\Om)\leq \pi d(\Om)$ and obtain the inequality $4 A(\Om)<P(\Om) d(\Om)$.

        On the other hand, we have by using Steiner's formula (c.f., Theorem  \ref{prop:steiner}) for $t\ge 0$:
        $$\frac{A(\Om+tB_1)}{d(\Om+t B_1)^2} = \frac{A(\Om)+P(\Om)t+\pi t^2}{(d(\Om)+2t)^2} = \frac{A(\Om)}{d(\Om)}\left(1+\left(\frac{P(\Om)}{A(\Om)}-\frac{4}{d(\Om)}\right)t +o(t)\right).$$
        Therefore, for sufficiently small values of $t>0$, we have 
        $$\frac{A(\Om+tB_1)}{d(\Om+t B_1)^2}>\frac{A(\Om)}{d(\Om)^2}.$$
        \item The fourth assertion is equivalent to the third assertion of Lemma \ref{lem:perturbation}. 
    \end{enumerate}
\end{proof}

\textbf{\textit{The simple connectedness:}}
By linearity properties of $P$ and $d$ with respect to the Minkowski sums and dilatation, the proof of the simple connectedness is quite standard. Let us consider two convex bodies $K_0$ and $K_1$ that are respectively solutions of the problems 
$$\min\left\{A(\Om)\ |\ \Omega\in \mathcal{K},\ d(\Om)=1\ \text{and}\ P(\Omega)= p_0\right\}$$
and 
$$\max\left\{A(\Om)\ |\ \Omega\in \mathcal{K},\ d(\Om)=1\ \text{and}\ P(\Omega)= p_0\right\}.$$

 For every $t\in [0,1]$, we define 
$$K_t = (1-t)K_0+t K_1.$$
Since the involved functionals are invariant by rotations, we assume without loss of generality that the diameters of $K_0$ and $K_1$ are colinear. We then have for every $t\in[0,1]$,
$$d(K_t) = (1-t)d(K_0)+t d(K_1)= (1-t)+t=1$$
and 
$$P(K_t) = (1-t)P(K_0)+t P(K_1)= (1-t)p_0+t p_0=p_0.$$

Then, since the perimeter and the area are continuous with respect to the Hausdorff distance in the class $\K$, the curve $t\in[0, 1]\longmapsto (P(K_t),A(K_t))$ is continuous in $\R^2$. Thus, the set $\{(P(K_t),A(K_t))\ |\ t\in [0,1]\}$ (which is included in the diagram $\D_1$) coincides with the vertical segment connecting the points $(p_0,A(K_0))$ and $(p_0,A(K_1))$, which proves that the diagram is exactly given by the set of points located between the graphs of the functions $\phi$ and $\psi$. 
\color{black}

\section{Parametrizations of convex sets and numerical setting}\label{s:numerical_shape_optimization_methods}

Before describing the parametrizations used in the present paper, let us recall the definition of (directional) first order shape derivative that is very important in numerical shape optimization. 
\begin{definition}
Let us take a shape depending functional $J:\Om\subset \R^2\longrightarrow \R$, and let $V:\R^2\rightarrow \R^2$ be a perturbation vector field. For $\Om\subset \R^2$, we take $\Om_t:=(I+tV)(\Om)$ where $I:x\in\R^n\longmapsto x$ is the identity map and $t$ a positive number. We say that the functional $J$ admits a directional shape derivative at $\Om$ in the direction $V$ if the following limit $\lim\limits_{t\rightarrow 0^+}\frac{J(\Om_t)-J(\Om)}{t}$ exists. In this case, we write
$$J'(\Om,V):= \lim\limits_{t\rightarrow 0^+}\frac{J(\Om_t)-J(\Om)}{t}.$$
\end{definition}

We recall that when $\Om$ is convex (or sufficiently smooth), the shape derivative of the area $A$ in the direction $V:\R^2\longrightarrow \R^2$ is given by the formula 
\begin{equation}\label{eq:shape_derivative_area}
    A'(\Om,V) = \int_{\partial \Om} \langle V,n\rangle d\mathcal{H}^1,
\end{equation}
and for the first Dirichlet eigenvalue $\lambda_1$, the shape derivative is given by
\begin{equation}\label{eq:shape_derivative_dirichlet}
\lambda_1'(\Om,V)=-\int_{\partial \Om} |\nabla u_1|^2\langle V,n\rangle d\mathcal{H}^1,
\end{equation}
where $n$ stands for the exterior unit normal vector to $\partial \Om$ and $u_1\in H^1_0(\Om)$ corresponds to a normalized eigenfunction (i.e., $\|u_1\|_2=1$) corresponding to the first eigenvalue $\lambda_1(\Om)$. \vspace{2mm}

Let us now present the four parametrizations considered in the paper and show how the convexity and the other constraints are implemented.  
\subsection{Parametrization via the support function }\label{ss:support_function}
\subsubsection{Definitions and main properties}
The support function is a useful tool to parametrize a convex set by a function defined on the unit sphere. It allows to formulate geometric problems into analytical ones. Thus, one can use tools from calculus of variations to tackle geometric questions. 


Let us now recall the definition of the support function: 
\begin{definition}
Let $\Om \in \K$  be a planar convex body. The support function $h_\Om$ is defined on $\R^n$ by 
$$\forall x \in \R^2,\ \ \  h_\Om(x) := \sup_{y\in \Om} \langle x,y \rangle.$$
The support function is positively $1$-homogeneous, so we can equivalently consider the restriction of $h_\Om$ to the interval $[0,2\pi)$. The support function of a planar set $\Om \in \K$ is  then defined by 
$$\forall \theta \in [0,2\pi),\ \ \ h_\Om(\theta)= \sup_{x\in \Om} \left\langle x,\binom{\cos \theta}{\sin \theta} \right\rangle= \sup_{(x_1,x_2)\in \Om} (x_1 \cos{\theta}+ x_2 \sin{\theta}).$$
\end{definition}



The support function has various interesting properties as linearity for Minkowski sums, characterizing a convex set and quite practical formulations of different geometrical quantities such as the perimeter, diameter, area and width. There are many other properties that enhance the popularity of this parametrization; we refer to \cite{support_properties_bayen} for a complete survey and detailed proofs. 

Let us state the main properties of the support function used in the present paper. \begin{Proposition}
Let $\Om_1,\Om_2\in \K$ and $h_{\Om_1},h_{\Om_2}$ the corresponding support functions, we have the following properties:
\begin{enumerate}
    \item $\Om_1\subset \Om_2\Leftrightarrow h_{\Om_1}\leq h_{\Om_2}$. 
    \item $h_{\lambda_1\Om_1+\lambda_2\Om_2}= \lambda_1 h_{\Om_1}+ \lambda_2 h_{\Om_2}$, where $\lambda_1,\lambda_2>0$. 
    \item $d^H(\Om_1,\Om_2) = \|h_{\Om_1}-h_{\Om_2}\|_{\infty}:= \sup\limits_{\theta\in[0,2\pi)} |h_{\Om_1}(\theta)-h_{\Om_2}(\theta)|$. 
\end{enumerate}
\end{Proposition}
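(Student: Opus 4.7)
The three statements are classical but have different flavours, so I would prove them in the listed order, since the third relies on the first two.

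For the first property, the forward implication $\Om_1\subset\Om_2\Rightarrow h_{\Om_1}\leq h_{\Om_2}$ is immediate: taking the supremum of $\langle\,\cdot\,,e_\theta\rangle$ (with $e_\theta=(\cos\theta,\sin\theta)$) over the smaller set yields a smaller value. The converse is the substantive direction and rests on the fact that a closed convex set equals the intersection of the closed half-planes containing it. Writing $\Om_2=\bigcap_{\theta\in[0,2\pi)}\{x\in\R^2\mid\langle x,e_\theta\rangle\leq h_{\Om_2}(\theta)\}$, the hypothesis $h_{\Om_1}\leq h_{\Om_2}$ gives $\Om_1\subset\{x\mid\langle x,e_\theta\rangle\leq h_{\Om_2}(\theta)\}$ for every $\theta$, hence $\Om_1\subset\Om_2$. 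The key fact used here is the Hahn--Banach separation theorem applied to a closed convex set and an exterior point, which produces the required supporting half-plane; this is the only real analytical input of the proof and is the step most likely to be cited rather than reproved.

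For the second property (positive linearity with respect to Minkowski sums and positive dilations), I would write out the supremum directly:
\[
h_{\lambda_1\Om_1+\lambda_2\Om_2}(\theta)=\sup_{x\in\Om_1,\,y\in\Om_2}\bigl\langle \lambda_1 x+\lambda_2 y,e_\theta\bigr\rangle=\lambda_1\sup_{x\in\Om_1}\langle x,e_\theta\rangle+\lambda_2\sup_{y\in\Om_2}\langle y,e_\theta\rangle,
\]
where the separation of the supremum over the Cartesian product is legitimate because $\lambda_1,\lambda_2>0$ and the two suprema are taken over independent variables. The result then follows by the definition of the support function.

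For the third property, I would combine the previous two with the standard characterization of the Hausdorff distance
\[
d^H(\Om_1,\Om_2)=\inf\{r>0\mid \Om_1\subset \Om_2+rB_1\text{ and }\Om_2\subset\Om_1+rB_1\},
\]
where $B_1$ is the closed unit disk (so $h_{B_1}\equiv1$). By properties 1 and 2, the inclusion $\Om_1\subset\Om_2+rB_1$ is equivalent to $h_{\Om_1}\leq h_{\Om_2}+r$, i.e.\ to $h_{\Om_1}-h_{\Om_2}\leq r$ pointwise on $[0,2\pi)$, and symmetrically for the other inclusion. Taking the infimum over admissible $r$ yields exactly $\sup_\theta|h_{\Om_1}(\theta)-h_{\Om_2}(\theta)|=\|h_{\Om_1}-h_{\Om_2}\|_\infty$. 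The only point that deserves care is to justify that the infimum is attained (so that both inclusions hold simultaneously at $r=\|h_{\Om_1}-h_{\Om_2}\|_\infty$); this follows from the continuity of $\theta\mapsto h_{\Om_i}(\theta)$ on the compact interval $[0,2\pi)$ extended periodically, which ensures the supremum of their difference is actually a maximum.
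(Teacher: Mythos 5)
Your proof is correct, but it is genuinely more than what the paper does: the paper's ``proof'' of this Proposition consists entirely of citations to standard references (\cite{convex_geometry} and \cite{schneider}), whereas you give a self-contained argument. Your route is the standard one those references follow: Hahn--Banach separation (equivalently, the representation of a closed convex set as the intersection of its supporting half-planes) for the nontrivial direction of item 1, direct manipulation of suprema over independent variables for item 2, and the $r$-fattening characterization $d^H(\Om_1,\Om_2)=\inf\{r>0 \mid \Om_1\subset\Om_2+rB_1,\ \Om_2\subset\Om_1+rB_1\}$ combined with items 1 and 2 (and $h_{B_1}\equiv 1$) for item 3; this is a clean reduction and is exactly how \cite[Lemma 1.8.14]{schneider} proceeds. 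Two minor remarks. First, the converse in item 1 produces a priori only $\Om_1\subset\overline{\Om_2}$, since the intersection of supporting half-planes recovers the closure; this is harmless here because the elements of $\K$ are convex bodies, i.e.\ compact, but it is worth saying. Second, your closing worry about the infimum being attained is unnecessary: the set of admissible $r$ in the fattening characterization is exactly the interval $[\,\|h_{\Om_1}-h_{\Om_2}\|_\infty,+\infty)$ by the pointwise inequalities alone, so its infimum equals the sup norm whether or not one checks attainment. Net effect: your argument buys the reader a complete, elementary verification at the cost of a few lines, while the paper buys brevity by outsourcing to the literature; both are legitimate for such classical facts.
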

\begin{proof}
For the first assertion we refer to \cite[Theorem 3.3.1]{convex_geometry}. For the second, we refer to \cite[Theorem 3.3.2]{convex_geometry}. As for the last one, we refer to \cite[Theorem 3.3.6]{convex_geometry} or \cite[Lemma 1.8.14]{schneider}.
\end{proof}

It is now natural to wonder how can the support function describe a convex shape (or more precisely its boundary). The following proposition provides an efficient parametrization of strictly convex planar domains, which are considered in numerical simulations to approximate the optimal shapes.  
\begin{Proposition}[]\label{prop:boundary_support}
Let $\Om\in \K$. The support function $h_\Om$ of the convex $\Om$ is of class $C^1$ on $\R$ if and only if $\Om$ is strictly convex, in which case its boundary $\partial \Om$ will be parametrized as follows: 
    \begin{equation}\label{eq:change_of_variables}
\left\{
\begin{array}{l}
  x_\theta = h_\Om(\theta) \cos \theta - h'_\Om(\theta) \sin{\theta},\vspace{2mm}\\
  y_\theta = h_\Om(\theta)\sin{\theta} + h'_\Om(\theta) \cos{\theta},\\
\end{array}
\right.
\end{equation}
where $\theta\in [0,2\pi)$. 
\end{Proposition}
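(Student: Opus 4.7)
The plan is to treat the two assertions in turn, relying on classical convex-analytic facts about the support function and then extracting the explicit parametrization via the envelope theorem.

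\textbf{Step 1 (differentiability $\Leftrightarrow$ strict convexity).} For every $\theta\in[0,2\pi)$, the supremum defining $h_\Om(\theta)$ is attained on the non-empty compact face $F(\theta):=\arg\max_{p\in\overline\Om}\langle p,(\cos\theta,\sin\theta)\rangle\subset\partial\Om$. My plan is to show that $h_\Om$ is differentiable at $\theta$ if and only if $F(\theta)$ is a single point. The forward direction follows because any two distinct points of $F(\theta)$ produce two distinct linear functions of $\theta$ that coincide with $h_\Om$ to first order on opposite sides, giving different left- and right-derivatives. The backward direction is the standard smoothness criterion for sublinear functions: $h_\Om$ is a convex, positively $1$-homogeneous function and the unique maximizer provides a unique element of the subdifferential, which forces differentiability (this is the statement of Theorem 1.7.2 in Schneider). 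Strict convexity of $\Om$ is by definition the statement that $\partial\Om$ contains no segment, i.e.\ that every face $F(\theta)$ is a single point; this gives the equivalence.

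\textbf{Step 2 (explicit parametrization).} Assume $\Om$ is strictly convex and fix $\theta$. By Step 1 there is a unique boundary point $p(\theta)=(x_\theta,y_\theta)\in\partial\Om$ with outward unit normal $(\cos\theta,\sin\theta)$, satisfying
\begin{equation*}
h_\Om(\theta)=x_\theta\cos\theta+y_\theta\sin\theta.
\end{equation*}
Differentiating the identity $h_\Om(\theta)=\max_{p\in\overline\Om}\bigl(p_1\cos\theta+p_2\sin\theta\bigr)$ via the envelope theorem (Danskin's theorem, applicable precisely because the maximizer is unique) yields
\begin{equation*}
h'_\Om(\theta)=-x_\theta\sin\theta+y_\theta\cos\theta.
\end{equation*}
Solving this $2\times 2$ linear system (whose determinant equals $\cos^2\theta+\sin^2\theta=1$) for $(x_\theta,y_\theta)$ produces exactly the announced formulas \eqref{eq:change_of_variables}. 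To finish, one checks that as $\theta$ runs through $[0,2\pi)$ the map $\theta\mapsto p(\theta)$ covers $\partial\Om$: at each $p\in\partial\Om$ strict convexity together with the supporting hyperplane theorem provides an outward unit normal direction, hence some $\theta$ with $p=p(\theta)$.

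\textbf{Main obstacle.} The parametrization half of the proof is essentially a one-line application of the envelope theorem once uniqueness of the maximizer is in hand, so the real work is in Step~1. The most delicate point is the forward implication there: one must argue carefully that the one-sided derivatives of $h_\Om$ at a non-smooth direction $\theta$ coincide with $\langle q_\pm,(\cos\theta,\sin\theta)^\perp\rangle$ for the two extreme endpoints $q_\pm$ of the face $F(\theta)$, and hence differ whenever $F(\theta)$ is non-trivial. This is well known in convex geometry, so the cleanest strategy is simply to invoke the relevant statements from Schneider's monograph (Theorems 1.7.2 and 1.8.14) rather than reprove them, keeping the argument short and self-contained.
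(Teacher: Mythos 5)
Your proposal is correct, but it is worth noting that the paper does not actually prove this proposition: it simply cites Propositions 2.1 and 2.2 of the reference on support function properties. You therefore supply a genuine argument where the paper outsources one. Your route — characterizing differentiability of $h_\Om$ at a direction $\theta$ by the face $F(\theta)$ being a singleton, then obtaining \eqref{eq:change_of_variables} by pairing the defining identity $h_\Om(\theta)=x_\theta\cos\theta+y_\theta\sin\theta$ with the envelope (Danskin) identity $h'_\Om(\theta)=-x_\theta\sin\theta+y_\theta\cos\theta$ and inverting the resulting rotation matrix — is the standard convex-geometric proof and is sound, including the surjectivity check via the supporting hyperplane theorem. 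Two small points deserve tightening. First, the statement asserts $C^1$ regularity, not mere differentiability; you should add the (one-line) observation that a convex function differentiable on an open set is automatically continuously differentiable there, or equivalently that $\theta\mapsto p(\theta)$ is continuous because any limit of maximizers is a maximizer and the maximizer is unique, so $h'_\Om(\theta)=-x_\theta\sin\theta+y_\theta\cos\theta$ is continuous. Second, in the forward implication of Step 1 the precise fact you need is that the one-sided derivatives of $h_\Om$ in a direction $u$ equal the support function of the face $F(u)$ evaluated at the corresponding tangential directions, which is nonlinear exactly when $F(u)$ is not a point; since you invoke Schneider for this anyway, the gap is bibliographic rather than mathematical.
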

\begin{proof}
    See Propositions 2.1 and 2.2 of \cite{support_properties_bayen}.
\end{proof}

Now that we know that for any convex set one can associate a support function, it is natural to seek for conditions that a function should satisfy in order to be the support function of a convex set. The answer is tightly related to the fact that the convexity of a set is equivalent to the positivity of the radius of curvature at any point of its boundary. 

\begin{Proposition}
Let $\Om$ a strictly convex planar set, we assume that its support function $h_\Om$ is $C^{1,1}$, then the geometric radius of curvature of $\partial \Om$ is given by $R_\Om = h_\Om''+h_\Om$ and we have 
\begin{equation}\label{eq:convexite_support}
    \forall \theta\in [0,2\pi),\ \ \ \ \ R_\Om(\theta) = h_\Om''(\theta)+h_\Om(\theta)\ge 0.
\end{equation}
Reciprocally, if $h$ is a $C^{1,1}$, $2\pi$ periodic function satisfying \eqref{eq:convexite_support}, then there exists a convex set $\Om\in \K$ such that $h=h_\Om$.
\end{Proposition}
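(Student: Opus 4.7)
My plan is to prove the forward direction via a direct curvature computation starting from the boundary parametrization, and the reciprocal direction by constructing $\Om$ as an intersection of half-planes and verifying that its support function is exactly $h$.

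For the forward implication I would start from the boundary parametrization established in Proposition \ref{prop:boundary_support}, namely $M_\theta = h(\theta)(\cos\theta,\sin\theta)+h'(\theta)(-\sin\theta,\cos\theta)$, and differentiate it with respect to $\theta$. The $h'$ and $-h$ contributions should cancel to yield $\frac{dM_\theta}{d\theta}=(h(\theta)+h''(\theta))(-\sin\theta,\cos\theta)$, so that the arc-length element satisfies $ds/d\theta=|h+h''|$. Because $\theta$ is, by construction of the support function, the angle of the outward unit normal, that normal rotates at unit speed with respect to $\theta$, so the (signed) radius of curvature is $R_\Om(\theta)=ds/d\theta=h(\theta)+h''(\theta)$. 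Convexity of $\Om$ then forces $R_\Om\ge 0$, which is exactly \eqref{eq:convexite_support}.

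For the reciprocal direction, given a $C^{1,1}$, $2\pi$-periodic $h$ with $h+h''\ge 0$, I would set
\begin{equation*}
\Om := \bigcap_{\theta\in[0,2\pi)} H_\theta, \qquad H_\theta := \left\{x\in\R^2\;:\;\langle x,(\cos\theta,\sin\theta)\rangle \le h(\theta)\right\}.
\end{equation*}
Being an intersection of closed half-planes, $\Om$ is automatically closed and convex, and continuity plus periodicity of $h$ give boundedness; one immediately has $h_\Om\le h$ pointwise. The main task is the reverse inequality $h_\Om \ge h$. For this it suffices to check that the candidate boundary point $P_\theta := h(\theta)(\cos\theta,\sin\theta)+h'(\theta)(-\sin\theta,\cos\theta)$ belongs to $\Om$, because then $h_\Om(\theta)\ge \langle P_\theta,(\cos\theta,\sin\theta)\rangle=h(\theta)$. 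Expanding the membership condition $P_\theta\in H_{\theta'}$ reduces to the trigonometric inequality
\begin{equation*}
h(\theta')\ge h(\theta)\cos(\theta'-\theta)+h'(\theta)\sin(\theta'-\theta), \qquad \forall\,\theta,\theta'\in\R.
\end{equation*}

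The hard part will be establishing this last inequality, which is where the assumption $h+h''\ge 0$ really bites. Fixing $\theta$ and setting $\phi(s):=h(\theta+s)-h(\theta)\cos s-h'(\theta)\sin s$, one has $\phi(0)=\phi'(0)=0$ and $\phi''+\phi=F$ with $F(s):=h''(\theta+s)+h(\theta+s)\ge 0$. Duhamel's formula then gives $\phi(s)=\int_0^{s}F(t)\sin(s-t)\,dt$, which is non-negative on $[0,\pi]$ but whose integrand changes sign on $[\pi,2\pi]$. To cover the second half-period I plan to exploit the $2\pi$-periodicity of $h$: the boundary conditions $g(2\pi)=g(0)$ and $g'(2\pi)=g'(0)$ for $g:=h(\theta+\cdot)$ are equivalent to the two orthogonality relations $\int_0^{2\pi}F(t)\cos t\,dt=\int_0^{2\pi}F(t)\sin t\,dt=0$. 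Using $\sin(s-t)=\sin s\cos t-\cos s\sin t$ together with these identities, one rewrites $\phi(s)=\int_s^{2\pi}F(t)\sin(t-s)\,dt$, whose integrand is non-negative on $[\pi,2\pi]$. This gives $\phi\ge 0$ globally and closes the proof. I expect this splitting to be the genuine technical obstacle, since the naive ODE estimate yields positivity only on a half-period and the periodicity of $h$ is indispensable to recover the full inequality.
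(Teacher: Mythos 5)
The paper does not actually prove this proposition: it simply cites Propositions 2.3 and Corollary 2.1 of the reference on support functions, so your self-contained argument is necessarily a different route, and it is correct. The forward computation $\frac{dM_\theta}{d\theta}=(h+h'')(-\sin\theta,\cos\theta)$ is the standard one, and your converse construction $\Om=\bigcap_{\theta}H_\theta$ together with the verification that the candidate boundary point $P_\theta$ lies in every $H_{\theta'}$ is the classical way to show that a function with $h+h''\ge 0$ is a support function. The genuinely delicate step --- extending $\phi\ge 0$ from $[0,\pi]$ to $[\pi,2\pi]$ --- is handled correctly: $\phi(2\pi)=\phi'(2\pi)=0$ follows from the $2\pi$-periodicity of $h$, Duhamel's formula turns these into the orthogonality relations $\int_0^{2\pi}F\cos t\,dt=\int_0^{2\pi}F\sin t\,dt=0$, and expanding $\sin(s-t)$ these relations do convert $\int_0^{s}F(t)\sin(s-t)\,dt$ into $\int_s^{2\pi}F(t)\sin(t-s)\,dt$, whose integrand is non-negative when $s\in[\pi,2\pi]$. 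Two small caveats, both traceable to the statement rather than to your argument: since $h$ is only $C^{1,1}$, the derivative $h''$ exists merely almost everywhere, so \eqref{eq:convexite_support} and the identity $\phi''+\phi=F$ should be read almost everywhere with $F\in L^\infty$ (Duhamel's representation remains valid in that setting and your sign argument is unaffected); and for $h(\theta)=a\cos\theta+b\sin\theta$ one has $h+h''\equiv 0$ while the construction produces the single point $(a,b)$, so the conclusion ``$\Om\in\K$'' (nonempty interior) strictly requires excluding such degenerate $h$ --- an imprecision already present in the proposition as stated.
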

\begin{proof}
    See Proposition 2.3 and Corollary 2.1 of \cite{support_properties_bayen}.
\end{proof}
\begin{remark}
The results above are stated for strictly convex sets with smooth support functions (which is enough for the numerical simulations, see \ref{ss:numerical_setting}). Nevertheless, let us give some remarks on the singular cases:
\begin{itemize}
    \item When $h_\Om$ is just $C^1$, the condition $R_\Om:=h_\Om''+h_\Om\ge 0$ can be understood in the sense of distributions that is to say that $R_\Om:=h_\Om''+h_\Om$ is a positive Radon measure (i.e., for all $C^\infty$ positive function $\phi$ of compact support in $[0,2\pi]$, one has: $\int_0^{2\pi}R_\Om\phi\ge 0$). 
    \item When $\Om$ is not strictly convex, the support function $h_\Om$ is not $C^1$ and the measure corresponding to the radius of curvature $R_\Om$ may involve Dirac measures. This is for example the case for polygons where $R_\Om$ will be given by a finite sum of Dirac measures, see \cite{schneider} for example.
\end{itemize}
\end{remark}

In addition to providing a quite simple description to the convexity constraint, see \eqref{eq:convexite_support}, the support function provides elegant expressions for some relevant geometric functionals. 
\begin{Proposition}
Let $\Om\in \K$ and $h_\Om$ be its support function, we have the following formulas: 
\begin{enumerate}
    \item for the perimeter $$P(\Om) = \int_{0}^{2\pi} h_\Om(\theta)d \theta,$$
    \item for the area $$A(\Om) = \frac{1}{2}\int_0^{2\pi} \big(h_\Om^2(\theta)-h_\Om'^2(\theta)\big)d\theta,$$
    \item for the minimal width 
    $$w(\Om) = \min\limits_{\theta\in [0,2\pi)} \big(h_\Om(\theta)+h_\Om (\pi+\theta)\big),$$
    \item for the diameter $$d(\Om) = \max\limits_{\theta\in [0,2\pi)} \big( h_\Om(\theta)+h_\Om (\pi+\theta)\big).$$
\end{enumerate}
\end{Proposition}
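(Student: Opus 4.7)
The plan is to derive items (1) and (2) from the boundary parametrization of Proposition \ref{prop:boundary_support}, while (3) and (4) follow essentially from the geometric meaning of the support function together with the classical identity between the diameter and the maximal width of a planar convex body. Throughout, I would first treat the smooth strictly convex case and then recover the general statement by approximation.

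Assume first that $h=h_\Om$ is $C^{1,1}$, so that the boundary is parametrized by $x(\theta)=h\cos\theta-h'\sin\theta$ and $y(\theta)=h\sin\theta+h'\cos\theta$. Direct differentiation yields $x'(\theta)=-(h+h'')\sin\theta$ and $y'(\theta)=(h+h'')\cos\theta$, so that the arc-length element is $ds=(h+h'')\,d\theta$, which is nonnegative by the convexity condition $h+h''\ge 0$. Integrating gives $P(\Om)=\int_0^{2\pi}(h+h'')\,d\theta=\int_0^{2\pi}h\,d\theta$, since the integral of $h''$ vanishes by $2\pi$-periodicity of $h'$. For the area, I would apply the Green-type formula $A(\Om)=\tfrac{1}{2}\oint_{\partial\Om}(x\,dy-y\,dx)$; substituting the parametrization and simplifying with $\cos^2\theta+\sin^2\theta=1$ produces $x\,y'-y\,x'=h(h+h'')$. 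An integration by parts then transforms $\int_0^{2\pi}h\,h''\,d\theta$ into $-\int_0^{2\pi}(h')^2\,d\theta$ (boundary terms cancelling by periodicity), yielding $A(\Om)=\tfrac{1}{2}\int_0^{2\pi}\bigl(h^2-(h')^2\bigr)d\theta$.

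The width formula is a direct translation of the definition: in direction $u_\theta=(\cos\theta,\sin\theta)$, the body $\Om$ lies between the two parallel supporting lines $\langle x,u_\theta\rangle=h_\Om(\theta)$ and $\langle x,-u_\theta\rangle=h_\Om(\theta+\pi)$, at distance exactly $h_\Om(\theta)+h_\Om(\theta+\pi)$, so the minimal width is the infimum of this quantity over $\theta$. For the diameter, I would prove the two-sided equality $d(\Om)=\max_\theta\bigl(h_\Om(\theta)+h_\Om(\theta+\pi)\bigr)$ as follows. If $p_1,p_2\in\Om$ realize respectively the suprema defining $h_\Om(\theta)$ and $h_\Om(\theta+\pi)$, then $h_\Om(\theta)+h_\Om(\theta+\pi)=\langle p_1-p_2,u_\theta\rangle\le|p_1-p_2|\le d(\Om)$, giving the upper bound. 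Conversely, given $x,y\in\Om$ with $|x-y|=d(\Om)$ and choosing $u_\theta=(y-x)/|y-x|$, the two parallel supporting lines perpendicular to $u_\theta$ are separated by at least $\langle y-x,u_\theta\rangle=d(\Om)$, hence $h_\Om(\theta)+h_\Om(\theta+\pi)\ge d(\Om)$.

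The main technical difficulty is extending identities (1) and (2) beyond the $C^{1,1}$ setting to arbitrary $\Om\in\K$, since polygonal bodies have only Lipschitz support functions and the associated curvature $h+h''$ may carry Dirac masses. I would handle this by approximating $\Om$ in the Hausdorff distance by a sequence of strictly convex bodies with smooth support functions $h_n\to h_\Om$ uniformly, and passing to the limit using continuity of $P$ and $A$ under Hausdorff convergence in $\K$, together with uniform convergence for (1) and $L^2$-convergence of $h'_n$ for (2). An elegant alternative is to recover both formulas intrinsically through the mixed-volume interpretation $2A(\Om)=V(\Om,\Om)$ and $P(\Om)=2V(\Om,B_1)$, which depends linearly on $h_\Om$ and avoids any derivative of the support function altogether.
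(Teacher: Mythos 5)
Your proposal is correct, but it is genuinely more than what the paper does: the paper's ``proof'' of this proposition consists only of citations to the literature (Theorems 2.1 and 2.2 and Section 3.1 of the cited survey on support functions), whereas you give a complete self-contained derivation. Your route is the standard one: for the perimeter, the arc-length element $ds=(h+h'')\,d\theta$ obtained from the boundary parametrization of Proposition \ref{prop:boundary_support}, followed by periodicity to kill $\int h''$; for the area, Green's formula plus one integration by parts; for the width and the diameter, the direct interpretation of $h_\Om(\theta)+h_\Om(\theta+\pi)$ as the distance between the two supporting lines orthogonal to $u_\theta$, with the two-sided inequality you spell out for the diameter. All of these steps are sound. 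The only point that deserves a word more than you give it is the approximation step for item (2) in the non-smooth case: uniform convergence $h_n\to h_\Om$ does not by itself give $L^2$ convergence of $h_n'$, but for support functions of convex bodies converging in Hausdorff distance the derivatives are uniformly bounded (by the circumradius of a common bounding ball) and converge almost everywhere, so dominated convergence closes the gap; alternatively, your mixed-volume remark ($P(\Om)=2V(\Om,B_1)$, $2A(\Om)=V(\Om,\Om)$ in the planar normalization) bypasses derivatives of $h_\Om$ entirely and is arguably the cleanest way to state the general case. What your approach buys is a verifiable argument inside the paper; what the paper's approach buys is brevity, since these identities are indeed classical.
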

\begin{proof}
The presented formulas are classical in convex geometry. We refer to \cite[Theorem 2.1]{support_properties_bayen} for the perimeter, to \cite[Theorem 2.2]{support_properties_bayen} for the area and for \cite[Section 3.1]{support_properties_bayen} for the width and the diameter. 
\end{proof}

\subsubsection{Numerical setting}\label{ss:numerical_setting}
Let us take $\Om\in \K$. Since $h_\Om$ is an $H^1$, $2\pi$-periodic function, it admits a decomposition in Fourier series: 
$$h_\Om(\theta) = a_0 + \sum_{n=1}^\infty (a_n \cos{n\theta}+ b_n\sin{n\theta}),$$
where $(a_n)_n$ and $(b_n)_n$ denote the Fourier coefficients defined by
$$a_0 = \frac{1}{2\pi}\int_0^{2\pi}h_\Om(\theta)d\theta$$
and 
$$\forall n\in \N^*,\ \ \ a_n = \frac{1}{\pi}\int_0^{2\pi} h_\Om(\theta)\cos{(n\theta)}d\theta,\ \ \ b_n = \frac{1}{\pi}\int_0^{2\pi} h_\Om(\theta)\sin{(n\theta)}d\theta.$$

We can then express the area and perimeter via the Fourier coefficient as follows 
$$P(\Om) = 2\pi a_0\ \ \ \text{and}\ \ \ A(\Om)= \pi a_0^2 + \frac{\pi}{2}\sum_{k=1}^\infty (1-k^2)(a_k^2+b_k^2).$$

To retrieve a finite dimensional setting, the idea is to parametrize sets via Fourier coefficients of their support functions truncated at a certain order $N\ge 1$. Thus, we will look for solutions in the set: 
$$\mathcal{H}_N:= \left\{\theta\longmapsto a_0 + \sum_{k=1}^N \big(a_k \cos{(k\theta)}+ b_k\sin{(k\theta)}\big) \ \big|\ a_0,\dots,a_N,b_1,\dots,b_N\in \R\right\}.$$
This approach is justified by the following approximation proposition: 
\begin{Proposition}(\cite[Section 3.4]{schneider})\\
Let $\Om\in \K$ and $\eps>0$. Then there exists $N_\eps$ and $\Om_\eps$ with support function $h_{\Om_\eps}\in \mathcal{H}_{N_\eps}$ such that $d^H(\Om,\Om_\eps)<\eps$. 
\end{Proposition}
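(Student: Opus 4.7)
The plan is to reduce the proposition to a statement about trigonometric-polynomial approximation of $h_\Om$ that also preserves the convexity condition $h_\Om''+h_\Om\ge 0$. Using the identity $d^H(\Om_1,\Om_2) = \|h_{\Om_1}-h_{\Om_2}\|_\infty$, the claim is equivalent to producing, for every $\eps>0$, a $2\pi$-periodic trigonometric polynomial $h\in\mathcal{H}_{N_\eps}$ with $\|h-h_\Om\|_\infty<\eps$ and $h''+h\ge 0$ in the sense of distributions on the torus, which by the characterization recalled just above would make $h$ the support function of a convex set $\Om_\eps$.

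The natural candidate is the Ces\`aro (Fej\'er) mean of the Fourier series of $h_\Om$, namely $h_N := F_N * h_\Om$, where $F_N$ denotes the Fej\'er kernel of order $N$. Since $\Om\in\K$, the function $h_\Om$ is continuous and $2\pi$-periodic, so Fej\'er's theorem yields the uniform convergence $\|h_N - h_\Om\|_\infty \to 0$ as $N\to +\infty$; each $h_N$ is a trigonometric polynomial of degree at most $N$, so $h_N\in\mathcal{H}_N$. This takes care of the approximation side of the statement.

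The step I expect to be the main obstacle, and the one that motivates using Fej\'er means rather than the partial Fourier sums themselves, is ensuring that the approximants remain support functions of convex sets. Setting $R_\Om := h_\Om''+h_\Om$, the remark after the convexity proposition tells us that $R_\Om$ is a non-negative Radon measure on the torus. Since convolution on the torus commutes with differentiation, one has
$$h_N''+h_N \;=\; F_N *(h_\Om''+h_\Om) \;=\; F_N * R_\Om.$$
The Fej\'er kernel $F_N$ is pointwise non-negative, so $F_N * R_\Om$ is a non-negative continuous function. Hence $h_N''+h_N\ge 0$ pointwise, and $h_N$ is indeed the support function of some planar convex set $\Om_{N}$.

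To conclude, I would pick $N_\eps$ large enough that $\|h_{N_\eps}-h_\Om\|_\infty<\eps$, set $\Om_\eps := \Om_{N_\eps}$, and read off $d^H(\Om_\eps,\Om)<\eps$ from the identity recalled above. Since $\Om$ has non-empty interior, uniform convergence of support functions forces $\Om_\eps\in\K$ for all sufficiently small $\eps$, completing the plan.
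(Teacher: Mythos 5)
Your argument is correct, and it is essentially the standard proof of this fact (the paper itself gives no proof, only the citation to Schneider; in the planar case the classical argument is exactly the Fej\'er-mean one you describe, spherical harmonics on $S^1$ being trigonometric polynomials). The two points that need care are both handled: first, the identity $h_N''+h_N=F_N*R_\Om$ is legitimate because $R_\Om:=h_\Om''+h_\Om$ is a non-negative Radon measure for \emph{every} $\Om\in\K$ (not just smooth or strictly convex ones), $F_N$ is smooth so the convolution with a measure is a continuous function, and differentiation passes through the convolution distributionally; combined with $F_N\ge 0$ this gives $h_N''+h_N\ge 0$ pointwise, so the smooth function $h_N$ is a genuine support function by the converse part of the characterization recalled in the paper. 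Second, the nondegeneracy claim at the end is justified: if $B(x_0,r)\subset\Om$ then $h_{\Om_\eps}\ge h_\Om-\eps\ge h_{B(x_0,r-\eps)}$, so $\Om_\eps$ contains a ball of radius $r-\eps$ and lies in $\K$ as soon as $\eps<r$. No gaps.
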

For more convergence results and applications of this method for different problems, we refer to \cite{antunes2019parametric}. \vspace{2mm}

Let $N\ge 1$, we summarize the parametrizations of functionals and constraints: 
\begin{itemize}
    \item The set $\Om$ is parametrized via the Fourier coefficients of its support function $a_0,\ldots,a_N,b_1,\ldots,b_N$.
    \item The convexity constraint is given by the condition $h''_\Om+h_\Om\ge 0$ on $[0,2\pi)$. In \cite{support_premier} the authors provide an exact characterization of this condition in terms of the Fourier coefficients, involving concepts from semidefinite programming. In \cite{MR3627042} the author provides a discrete alternative of the convexity inequality which has the advantage of being linear in terms of the Fourier coefficients. We choose $\theta_m = 2\pi m/M$ where $m\in \llbracket 1,M \rrbracket$ for some positive integer $M$ and we impose the inequalities $h''_\Om(\theta_m)+h_\Om(\theta_m)\ge 0$ for $m\in \llbracket 1,M \rrbracket$. As already shown in \cite{MR3627042} we obtain the following system of linear inequalities: $$\begin{pmatrix}
    1 & \alpha_{1,1} & \cdots & \alpha_{1,N} & \cdots & \beta_{1,1} & \cdots & \beta_{1,N}\\
     \vdots & \vdots & \ddots & \vdots & \cdots & \vdots & \ddots & \vdots\\
    1 & \alpha_{N,1} & \cdots & \alpha_{N,N} & \cdots & \beta_{N,1} & \cdots & \beta_{N,N}
    \end{pmatrix} 
    \begin{pmatrix}
    a_0\\
    a_1\\
     \vdots \\
    a_N\\ 
    b_1\\
    \vdots\\ 
    b_N
    \end{pmatrix} \ge \begin{pmatrix}
    0\\
     \vdots \\
    0
    \end{pmatrix}$$
    where $\alpha_{m,k}=(1-k^2)\cos{(k\theta_m)}$ and $\beta_{m,k}=(1-k^2)\sin{(k\theta_m)}$ for $(m,k)\in \llbracket 1,M\rrbracket\times \llbracket 1,N\rrbracket$. 
    \item The perimeter constraint $P(\Om)=p_0$ is given by $$2\pi a_0 = p_0.$$
    \item The area constraint $A(\Om)=A_0$ is given by $$\pi a_0^2 + \frac{\pi}{2}\sum_{n=1}^N (1-k^2)(a_k^2+b_k^2)=A_0.$$
    \item The diameter constraint $d(\Om) = d_0$ is equivalent to 
    \begin{equation*}
    \left\{
    \begin{array}{l}
    \forall \theta\in [0,2\pi),\ \ \ \ h_\Om(\theta)+h_\Om(\pi+\theta)\leq d_0,\vspace{2mm}\\
    \exists \theta_0\in [0,2\pi),\ \ \ h_\Om(\theta_0)+h_\Om(\pi+\theta_0)= d_0.\\
    \end{array}
    \right.
    \end{equation*}
    Again, as for the convexity constraint, we choose $\theta_m = 2\pi m/M$ where $m\in \llbracket 1,M' \rrbracket$ for some positive integer $M'$ and we impose the inequalities $h_\Om(\theta_m)+h_\Om(\pi+\theta_m)\leq d_0$ for $m\in \llbracket 1,M' \rrbracket$, we also assume without loss of generality that $h_\Om(0)+h_\Om(\pi)= d_0$ (because all functionals are invariant by rotations). All theses conditions can be written in terms of $(a_k)$ and $(b_k)$ as the following linear constraints: 
     \begin{equation*}
    \left\{
    \begin{array}{l}
    \forall m\in \llbracket 1,M' \rrbracket,\ \ \ \ 2 a_0 + \sum\limits_{k=1}^N \big((1+(-1)^k) \cos{(k\theta_m)}\cdot a_k+ (1+(-1)^k)\sin{(k\theta_m)}\cdot b_k\big)\leq d_0,\vspace{2mm}\\
    \ \ \ 2 a_0 + \sum\limits_{k=1}^N (1+(-1)^k)\cdot a_k = d_0.\\
    \end{array}
    \right.
    \end{equation*}
\end{itemize}

\subsubsection{Computation of the gradients}
 In order to have an efficient optimization algorithm, we compute the derivatives of the eigenvalue and the area in terms of the Fourier coefficients of the support function (while for the convexity, the diameter and the perimeter constraints no gradient computation is needed in this setting since these constraints are linear). To this aim, we first consider two types of perturbations, a cosine term and a sine term, namely two families of deformations $(V_{a_k})$ and $(V_{b_k})$ that respectively correspond to the perturbation of the coefficients $(a_k)$ and $(b_k)$ in the Fourier decomposition of the support function. As stated in Proposition \ref{prop:boundary_support}, when $\Om$ is strictly convex, the support function provides a parametrization of its boundary $\partial \Om = \{(x_\theta,y_\theta)|\ \theta\in [0,2\pi]\}$; then  the perturbation fields $(V_{a_k})$ and $(V_{b_k})$ are explicitly given on the boundary of $\Om$ as follows:
 \begin{equation*}
    \left\{
    \begin{array}{l}
    V_{a_k}(x_\theta,y_\theta) = \big(\cos{(k\theta)}\cos{\theta}+k \sin(k\theta)\sin{\theta},\cos(k\theta) \sin\theta - k \sin(k\theta) \cos{\theta}\big),\ \ \ \ \text{where $k\in \llbracket 0,N\rrbracket$}
    \vspace{2mm}\\
    V_{b_k}(x_\theta,y_\theta) = \big(\sin{(k\theta)}\cos{\theta}+k \cos(k\theta)\sin{\theta},\sin(k\theta) \sin\theta - k \cos(k\theta) \cos{\theta}\big),\ \ \ \ \text{where $k\in \llbracket 1,N\rrbracket$}
    \end{array}
    \right.
\end{equation*}

By applying \eqref{eq:shape_derivative_area}, we have the following formulas for the shape derivatives of the area functional $A$ in the directions $(V_{a_k})$ and $(V_{b_k})$:
\begin{equation*}
    \left\{
    \begin{array}{l}
    A'(\Om,V_{a_0})= 2\pi a_0
    \vspace{2mm}\\
    A'(\Om,V_{a_k})=\pi(1-k^2)a_k,\ \ \ \ \text{where $k\in \llbracket 1,N\rrbracket$} \vspace{2mm}\\
    A'(\Om,V_{b_k})=\pi(1-k^2)b_k,\ \ \ \ \text{where $k\in \llbracket 1,N\rrbracket$}  
    \end{array}
    \right.
\end{equation*}
We note that the derivative vanishes for the perturbations $V_{a_1}$ and $V_{b_1}$ since they correspond to translations. 

Similarly to \cite[Section 5.1.1]{antunes2019parametric}, by using \eqref{eq:shape_derivative_dirichlet} and the change of variables in \eqref{eq:change_of_variables}, we obtain formulas for the shape derivatives of the Dirichlet eigenvalue as follows
\begin{equation*}
    \left\{
    \begin{array}{l}
    \lambda_1'(\Om,V_{a_k})=-\int_0^{2\pi} |\nabla u_1|^2(x_\theta,y_\theta)\cos{(k\theta)}\big(h''_\Om(\theta)+h_\Om(\theta)\big)d\theta,\ \ \ \ \text{where $k\in \llbracket 0,N\rrbracket$}, \vspace{3mm}\\
    \lambda_1'(\Om,V_{b_k})=-\int_0^{2\pi} |\nabla u_1|^2(x_\theta,y_\theta)\sin{(k\theta)}\big(h''_\Om(\theta)+h_\Om(\theta)\big)d\theta,\ \ \ \ \text{where $k\in \llbracket 1,N\rrbracket$}.
    \end{array}
    \right.
\end{equation*}
The computation of the integrals is done by using an order $1$ trapezoidal quadrature and the computation of the eigenfunction $u_1$ is done by using Matlab's toolbox \texttt{PDEtool}.

\subsection{Parametrization via the Gauge function }\label{ss:gauge_function}
\subsubsection{Definition and main properties}
A classical way to parametrize starshaped open sets (in particular convex ones) is by using the so-called \textit{gauge function}, see for example \cite[Section 1.7]{schneider}. 
\begin{definition}
Let $\Om$ a bounded, open subset of $\R^2$, starshaped with respect to the origin. The gauge function $g_\Om$ is defined on $\R^2$ by
$$\forall x\in \R^2,\ \ \ g_\Om(x)=\inf\{t>0\ |\ \frac{1}{t}x\in \Om\}.$$
The gauge function is positively $1$-homogeneous, so one can equivalently consider the restriction of $g_\Om$ to the interval $[0,2\pi)$. 
\end{definition}
We use polar coordinates representation $(r,\theta)$ for the domains, we then define the gauge function on $\R$ as follows: $$\forall \theta\in [0,2\pi),\ \ \ g_\Om(\theta) = \inf\left\{t>0\ \Big|\ \frac{1}{t}\  \binom{\cos{\theta}}{\sin{\theta}}\in\Om \right\}.$$

The open set $\Om$ is then given by 
$$\Om=\left\{(r,\theta)\in [0,+\infty)\times [0,2\pi)\ \Big|\ r<\frac{1}{g_\Om(\theta)}\right\}.$$

The curvature of the boundary of $\Om$ is given by 
$$\kappa_\Om = \frac{g''_\Om+g_\Om}{\left(1+\left(\frac{g'_\Om}{g_\Om}\right)^2\right)^{\frac{3}{2}}},$$
where the second order derivative is to be understood in the sense of distributions. Thus, as for the support function, the starshaped set $\Om\subset\R^2$ is convex if and only if
$$ g''_\Om+g_\Om\ge 0.$$
Moreover, straight lines in $\partial \Om$ are parameterized by the set $\{g''_\Om +g_\Om = 0\}$, and corners in the boundary are seen as Dirac masses in the measure $g''_\Om +g_\Om$. 

Both the perimeter and area can be expressed via the gauge function as follows:
\begin{Proposition}\label{prop:integral_gauge}
Let $\Om$ a planar set star-shaped with respect to the origin. The perimeter and the area of $\Om$ are given by:
$$P(\Om) = \int_0^{2\pi} \frac{\sqrt{g^2_\Om+g'^2_\Om}}{g^2_\Om}d\theta\ \ \ \text{and}\ \ \ A(\Om) = \frac{1}{2} \int_0^{2\pi} \frac{d\theta}{g_\Om^2}.$$
\end{Proposition}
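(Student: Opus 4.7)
The plan is to use the standard polar-coordinate representation of the domain. Since $\Om$ is starshaped with respect to the origin, its boundary is parametrized by the radial function $r : [0,2\pi) \to (0,+\infty)$, and the definition of the gauge gives immediately $r(\theta) = 1/g_\Om(\theta)$ for each $\theta$. In other words, a point $(\rho\cos\theta, \rho\sin\theta)$ belongs to $\Om$ if and only if $\rho < 1/g_\Om(\theta)$. I would record this identification as the starting point, then derive each of the two formulas as a direct consequence.

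For the area, I would simply apply Fubini in polar coordinates:
$$A(\Om) = \int_0^{2\pi}\!\!\int_0^{r(\theta)} \rho \, d\rho \, d\theta = \frac{1}{2}\int_0^{2\pi} r(\theta)^2 \, d\theta = \frac{1}{2}\int_0^{2\pi} \frac{d\theta}{g_\Om(\theta)^2}.$$
For the perimeter, I would parametrize $\partial \Om$ by $\gamma(\theta) = (r(\theta)\cos\theta, r(\theta)\sin\theta)$. A short computation gives $|\gamma'(\theta)|^2 = r(\theta)^2 + r'(\theta)^2$, so the arc-length formula yields $P(\Om) = \int_0^{2\pi}\sqrt{r^2+r'^2}\,d\theta$. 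Substituting $r = 1/g_\Om$ and $r' = -g'_\Om/g_\Om^2$ one obtains
$$r^2 + r'^2 = \frac{g_\Om^2 + g'^2_\Om}{g_\Om^4},$$
and taking the square root produces exactly the stated integrand $\sqrt{g_\Om^2+g'^2_\Om}/g_\Om^2$.

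There is no real obstacle in the smooth case; the only point that deserves care is the regularity of $g_\Om$ needed to make the arc-length formula rigorous. For a general starshaped domain one only has $g_\Om$ Lipschitz, so $r'$ is defined almost everywhere and the arc-length integral still makes sense in the Lebesgue sense; for the convex domains that are the actual objects of interest in the paper this is well known and one can also pass to the limit from the polygonal case. I would mention this briefly and then conclude.
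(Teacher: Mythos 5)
Your proposal is correct and follows essentially the same route as the paper's own proof: identify the radial function as $1/g_\Om$, compute the area by integrating in polar coordinates, and obtain the perimeter from the polar arc-length formula $\int_0^{2\pi}\sqrt{\rho_\Om^2+\rho_\Om'^2}\,d\theta$ after substitution. Your added remark on the regularity needed for the arc-length formula is a small bonus the paper does not spell out, but otherwise the two arguments coincide.
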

\color{black}
\begin{proof}
    The gauge function $g_\Om$ is equal to the inverse of the radial function $\rho_\Om$. We have 
    $$A(\Om) = \int_{\theta=0}^{2\pi} \int_{r=0}^{\rho_\Om(\theta)}rdrd\theta = \int_{\theta=0}^{2\pi} \int_{r=0}^{1/g_\Om(\theta)}rdrd\theta = \frac{1}{2}\int_{0}^{2\pi} \frac{1}{g_\Om^2}d\theta$$
    and 
    $$P(\Om) =\int_{\theta = 0}^{2\pi} \sqrt{\left(\frac{d\rho_\Om}{d\theta}\right)^2+\rho_\Om(\theta)^2}d\theta = \int_{\theta = 0}^{2\pi} \sqrt{\left(-\frac{g'_\Om}{g_\Om^2}\right)^2+\frac{1}{g_\Om^2}}d\theta =\int_0^{2\pi} \frac{\sqrt{g^2_\Om+g'^2_\Om}}{g^2_\Om}d\theta.$$
\end{proof}
\color{black}

\subsubsection{Numerical setting}\label{para:num_gauge}
Similarly to the case of support function, in the planar case, we can decompose its gauge function as a Fourier series: 
$$g_\Om(\theta) = a_0 + \sum_{k=1}^\infty (a_k \cos{k\theta}+ b_k\sin{k\theta}),$$
where $(a_n)_n$ and $(b_n)_n$ denote the Fourier coefficients defined by
$$a_0 = \frac{1}{2\pi}\int_0^{2\pi}g_\Om(\theta)d\theta$$
and 
$$\forall k\in \N^*,\ \ \ a_k = \frac{1}{\pi}\int_0^{2\pi} g_\Om(\theta)\cos{(k\theta)}d\theta,\ \ \ b_k = \frac{1}{\pi}\int_0^{2\pi} g_\Om(\theta)\sin{(k\theta)}d\theta.$$

Here also, we look for solutions among truncated functions given in the following space:
$$\mathcal{H}_N:= \left\{\theta\longmapsto a_0 + \sum_{k=1}^N \big(a_k \cos{(k\theta)}+ b_k\sin{(k\theta)}\big) \ \big|\ a_0,\dots,a_N,b_1,\dots,b_N\in \R\right\}.$$

In practice, the computation of the perimeter and the area is done by considering a uniform discretization\\ 
$\left\{\theta_k:=\frac{2k\pi}{M}\ |\ k\in \llbracket 0,M-1\rrbracket\right\}$ of the interval $[0,2\pi)$, with $M$ a positive integer (we take it equal to $200$ for the applications). We then approach the domain $\Om$ by the polygon $\Om_M$ of vertices $A_k\left(\frac{\cos{\theta_k}}{g_\Om(\theta_k)},\frac{\sin{\theta_k}}{g_\Om(\theta_k)}\right)$, where $k\in \llbracket 0,M-1\rrbracket$. The functionals perimeter and area (given as integrals in Proposition \ref{prop:integral_gauge}) are then computed in terms of $(a_k)$ and $(b_k)$ by using an order 1 trapezoidal quadrature: 
\begin{equation*}
\left\{
\begin{array}{l}
  P(\Om) \approx \frac{1}{M}\sum\limits_{k=0}^{M-1} \frac{\sqrt{u^2_\Om(\theta_k)+u'^2_\Om(\theta_k)}}{u^2_\Om(\theta_k)}= \frac{1}{M} \sum\limits_{k=0}^{M-1}  \frac{\sqrt{\left(a_0 + \sum\limits_{p=1}^N \big(a_p \cos{(p\theta_k)}+b_p\sin{(p\theta_k)}\big)\right)^2+\left(\sum\limits_{p=1}^N \big(-p a_p \sin{(p\theta_k)}+ p b_p\cos{(p\theta_k)}\big)\right)^2}}{\left(a_0 + \sum\limits_{p=1}^N \big(a_p \cos{(p\theta_k)}+b_p\sin{(p\theta_k)}\big)\right)^2}, \vspace{4mm}\\
  A(\Om) \approx \frac{1}{M}\sum\limits_{k=0}^{M-1} \frac{1}{u^2_\Om(\theta_k)}= \frac{1}{M}\sum\limits_{k=0}^{M-1} \frac{1}{\left(a_0 + \sum\limits_{p=1}^N \big(a_p \cos{(p\theta_k)}+b_p\sin{(p\theta_k)}\big)\right)^2} .
\end{array}
\right.
\end{equation*}

Here also the convexity is parametrized as in the previous section by linear inequalities involving the coefficients $(a_k)$ and $(b_k)$. 

\subsubsection{Computation of the gradients}
The shape gradients of the area and the perimeter are computed by differentiating the explicit formulae above with respect to the Fourier coefficients. As for the Dirichlet eigenvalue, one has to use the Hadamard formula \eqref{eq:shape_derivative_dirichlet}
with $V$ is a perturbation field corresponding to the perturbation of a Fourier coefficient. Let us investigate the values of such perturbations on the boundary of $\Om$. Let $\phi:\theta\in \R\longmapsto v(\theta)$ be
 a Lipschitz $2\pi$-periodic function, for sufficiently small values of $t>0$, we write: 
 $$\frac{1}{g_\Om+t\phi}=\frac{1}{g_\Om}\left(1+t\frac{\phi}{g_\Om}\right)^{-1}=\frac{1}{g_\Om}\left(1-\frac{\phi}{g_\Om}\cdot t+{o}(t)\right)= \frac{1}{g_\Om}-\frac{\phi}{g_\Om^2}\cdot t+{o}(t).$$
 \textcolor{black}{We recall that for a perturbation $V$, we have $\Om_t = (I+tV)(\Om)$.} Thus, perturbing the gauge function in a direction $\phi$ corresponds to a perturbation field defined on the boundary $\partial\Om=\left\{\left(\frac{\cos{\theta}}{g_\Om(\theta)},\frac{\sin{\theta}}{g_\Om(\theta)}\right)\ |\ \theta\in [0,2\pi]\right\}$ by
 $$ \left(\frac{\cos{\theta}}{g_\Om(\theta)},\frac{\sin{\theta}}{g_\Om(\theta)}\right) \in \partial \Om \longmapsto -\frac{\phi(\theta)}{g_\Om^3(\theta)}\binom{\cos{(\theta)}}{\sin{(\theta)}}\in \R^2.$$
 
 We then deduce that the perturbation fields corresponding to the perturbations of the coefficients $(a_k)$ and $(b_k)$ are given by 
 \begin{equation*}
    \left\{
    \begin{array}{l}
    V_{a_k}\left(\frac{\cos{\theta}}{g_\Om(\theta)},\frac{\sin{\theta}}{g_\Om(\theta)}\right)= -\frac{\cos{(k\theta)}}{g_\Om^3(\theta)}\binom{\cos{(\theta)}}{\sin{(\theta)}},\ \ \ \ \text{where $k\in \llbracket 0,N\rrbracket$}, \vspace{3mm}\\
    V_{b_k}\left(\frac{\cos{\theta}}{g_\Om(\theta)},\frac{\sin{\theta}}{g_\Om(\theta)}\right)= -\frac{\sin{(k\theta)}}{g_\Om^3(\theta)}\binom{\cos{(\theta)}}{\sin{(\theta)}},\ \ \ \ \text{where $k\in \llbracket 1,N\rrbracket$},
    \end{array}
    \right.
\end{equation*}
where $\theta \in [0,2\pi]$.\vspace{2mm}

Once the perturbation fields are known, we use the polygonal approximation $\Om_M$ (introduced above in Paragraph \ref{para:num_gauge}) of the domain $\Om$ to provide a numerical approximation of the shape gradient as follows:

$$\lambda_1'(\Om,V)\approx -\sum\limits_{k=0}^{M-1} |\nabla u_1|^2(x_{I_k},y_{I_k}) \langle V(x_{I_k},y_{I_k}),n_k\rangle d\sigma_k,$$
with the convention $A_M := A_0$ and: 
\begin{itemize}
    \item $d\sigma_k=\sqrt{(x_{A_k}-x_{A_{k+1}})^2+(y_{A_k}-y_{A_{k+1}})^2}$,
    \item $I_k$ is the middle of the segment $[A_k A_{k+1}]$.  
    \item $n_k:= \frac{1}{d\sigma_k}\binom{-(y_{A_k}-y_{A_{k+1}})}{x_{A_k}-x_{A_{k+1}}}$ is the exterior unit vector normal to the segment $[A_k A_{k+1}]$. 
\end{itemize}

\subsection{Polygonal approximation and parametrization via the  vertices}\label{ss:vertices}
In this section, we propose to parametrize a convex set via the coordinates $(x_k,y_k)_{k\in \llbracket 0,M-1\rrbracket}$ of the vertices $A_k$ of a corresponding polygonal approximation denoted by $\Om_M$ (with $M\ge 3$). We assume that the points $(A_k)_{k\in \llbracket 0,M-1\rrbracket}$ form in this order a simple polygon (that is a polygon that does not intersect itself and has no holes) and recall the conventions $A_M:=A_0$ and $A_{-1}:=A_{M-1}$.  

As for the previous cases, we have formulas for the involved geometrical quantities: 
 $$
\left \{
\begin{array}{c @{=} c}
    P(\Om_M)\ \ \ &\ \ \ \sum\limits_{k=0}^{M-1} \sqrt{(x_{k+1}-x_k)^2+(y_{k+1}-y_k)^2}, \vspace{3mm}\\ 
    |\Om_M|\ \ \ &\  \frac{1}{2}\left| \sum\limits_{k=0}^{M-1} x_ky_{k+1}-x_{k+1}y_k \right|\ \ \ \ \ \ \ \ \ \ \ \ \ \ \ \ \ \ \ \vspace{3mm}\\
    d(\Om_M)\ \ \ &\  \max\limits_{i,j} \sqrt{(x_i-x_j)^2+(y_i-y_j)^2}\ \ \ \ \ \ \ \ \ \ \ \ \ \ \ \ \ \ \ 
\end{array}
\right.
$$   

It is easily seen that $\Om_M$ is convex if and only if all the interior angles are less than or equal to $\pi$. By using the cross product, this, in turn, is equivalent to the following quadratic constraints:
$$(x_{k-1}-x_k)(y_{k+1}-y_k)-(y_{k-1}-y_k)(x_{k+1}-x_k)\leq 0,$$
for $k\in \llbracket 0,M-1\rrbracket$, where we used the conventions $x_0 := x_M$, $y_0 := y_M$, $x_{M+1} := x_1$ and $y_{M+1} := y_1$. This characterization of convexity is quite natural and has already been considered in literature, see \cite{MR4089508} for example. 

The gradients of the perimeter, area and convexity constraints (corresponding to the variables $(x_k)$ and $(y_k)$) are directly obtained by differentiating the explicit formulae given above. On the other hand, the gradient of the eigenvalue is computed (as in the previous section) by using Hadamard formula formula \eqref{eq:shape_derivative_dirichlet}.
As for the diameter functional, the shape derivative is computed by using the result of Theorem \ref{th:diametre}.

\begin{center}
 \begin{figure}[ht]
  \centering
    \includegraphics[scale=0.5]{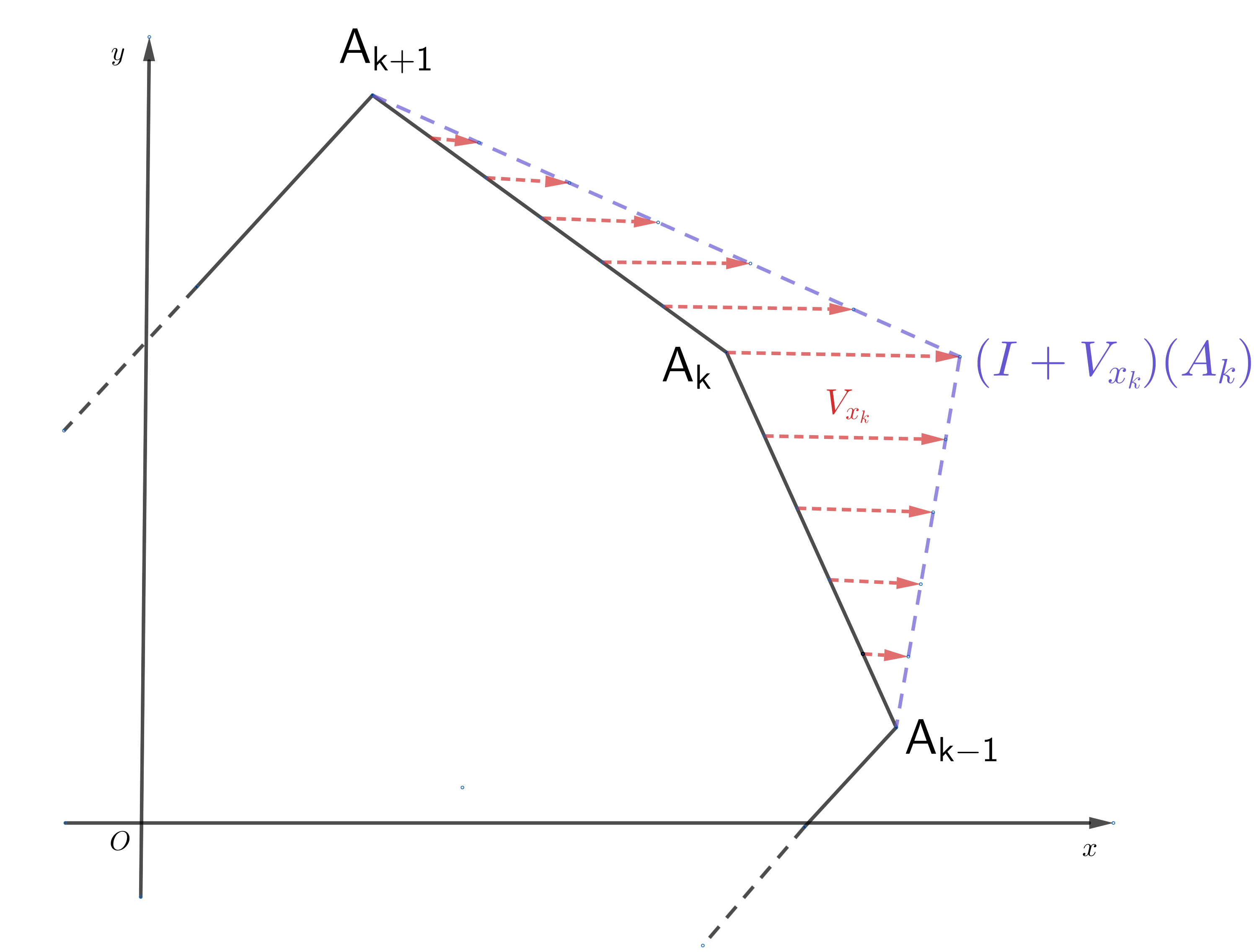}
    \caption{Perturbation field $V_{x_k}$ associated to the perturbation of the parameter $x_k$.}
    \label{fig:per}
\end{figure}
\end{center}

\subsection{Parametrization via the radial function }\label{ss:radial_function}
\subsubsection{Definition and main properties} 
It is common to parametrize star-shaped domains via their \textit{radial function}. In this section, we present this parametrization 
\begin{definition}
Let $\Om\subset \R^2$ be a domain star-shaped with respect to the origin. The radial function $\rho_\Om$ is defined on $\R^2$ by 
$$\forall x\in \R^2,\ \ \ \rho_\Om(x) = \sup\{t>0\ |\ t x\in \Om\}.$$
The radial function is positively $1$-homogeneous, so one can equivalently consider the restriction of $\rho_\Om$ to the interval $[0,2\pi)$. The radial function can then be defined on $[0,2\pi)$ as follows 
$$\forall \theta\in [0,2\pi),\ \ \ \rho_\Om(\theta) = \sup\left\{t>0\ |\ t \binom{\cos{\theta}}{\sin{\theta}}\in \Om\right\}.$$
\end{definition}

If $\Om$ is open, it can be given in polar coordinates as follows 
$$\Om = \left\{(r,\theta)\in [0;+\infty)\times \R\ |\ r<\rho_\Om(\theta)\right\}.$$

We note that the radial function is simply the inverse of the gauge function introduced in Section \ref{ss:gauge_function}. 

\subsubsection{Numerical setting}

Unfortunately, in contrary to the previous cases, convexity cannot be given by linear constraints on the Fourier coefficients of the periodic function $\rho_\Om$. We propose to approximate a set via polygons of vertices $\rho_\Om(\theta_k)\binom{\cos{\frac{2k\pi}{M}}}{\sin{\frac{2k\pi}{M}}}\in \R^2$, where $k\in \llbracket 0, M-1\rrbracket$ and $M$ a sufficiently large integer (in practice we take $M=200$). 

Thus, a star-shaped set $\Om$ will be parametrized via $M$ positive distances $(\rho_k)_{k\in \llbracket 1,M\rrbracket}$ that describe a polygonal approximation of $\Om$ given by vertices $A_k=\rho_k\binom{\cos{\frac{2k\pi}{M}}}{\sin{\frac{2k\pi}{M}}}$. We always consider the convention $A_M:=A_0$ and $A_{-1}:=A_{M-1}$ (in particular $\rho_M:=\rho_0$ and $\rho_{-1}:=\rho_{M-1}$).

This setting allows to give good approximations of the involved geometrical functionals (perimeter, area and diameter). we have 
\begin{enumerate}
    \item for the area: 
    $$A(\Om)=\frac{1}{2}\sin{\frac{2\pi}{M}}\cdot \sum_{k=0}^{M-1}\rho_k\rho_{k+1},$$
    \item for the perimeter: 
    $$P(\Om) = \sum_{k=0}^{M-1} \sqrt{\rho_k^2+\rho_{k+1}^2-2\rho_k\rho_{k+1} \cos{\left(\frac{2k\pi}{M}\right)}},$$
    \item and the diameter:
    $$d(\Om) = \max_{i\ne j}\sqrt{\left[\rho_i\cos{\frac{2i\pi}{M}}-\rho_j\cos{\frac{2j\pi}{M}}\right]^2+\left[\rho_i\sin{\frac{2i\pi}{M}}-\rho_j\sin{\frac{2j\pi}{M}}\right]^2},$$
    this formula provides the diameter in $\O(M^2)$ complexity. 
\end{enumerate}

It remains to describe the convexity constraint via the parameters $(\rho_k)_{k\in \llbracket 0,M-1\rrbracket}$: we remark that the polygon (which contains the origin $O$) whose vertices are given by $A_k:=\left(\rho_k\cos{\frac{2k\pi}{M}},\rho_k\sin{\frac{2k\pi}{M}}\right)$ is convex if and only if the sum of the areas of the triangles $OA_kA_{k+1}$ and $OA_kA_{k-1}$ is greater or equal than the area of $OA_{k-1}A_{k+1}$, see Figure \ref{fig:convex_area}. 
\begin{center}
 \begin{figure}[ht]
  \centering
    \includegraphics[scale=0.4]{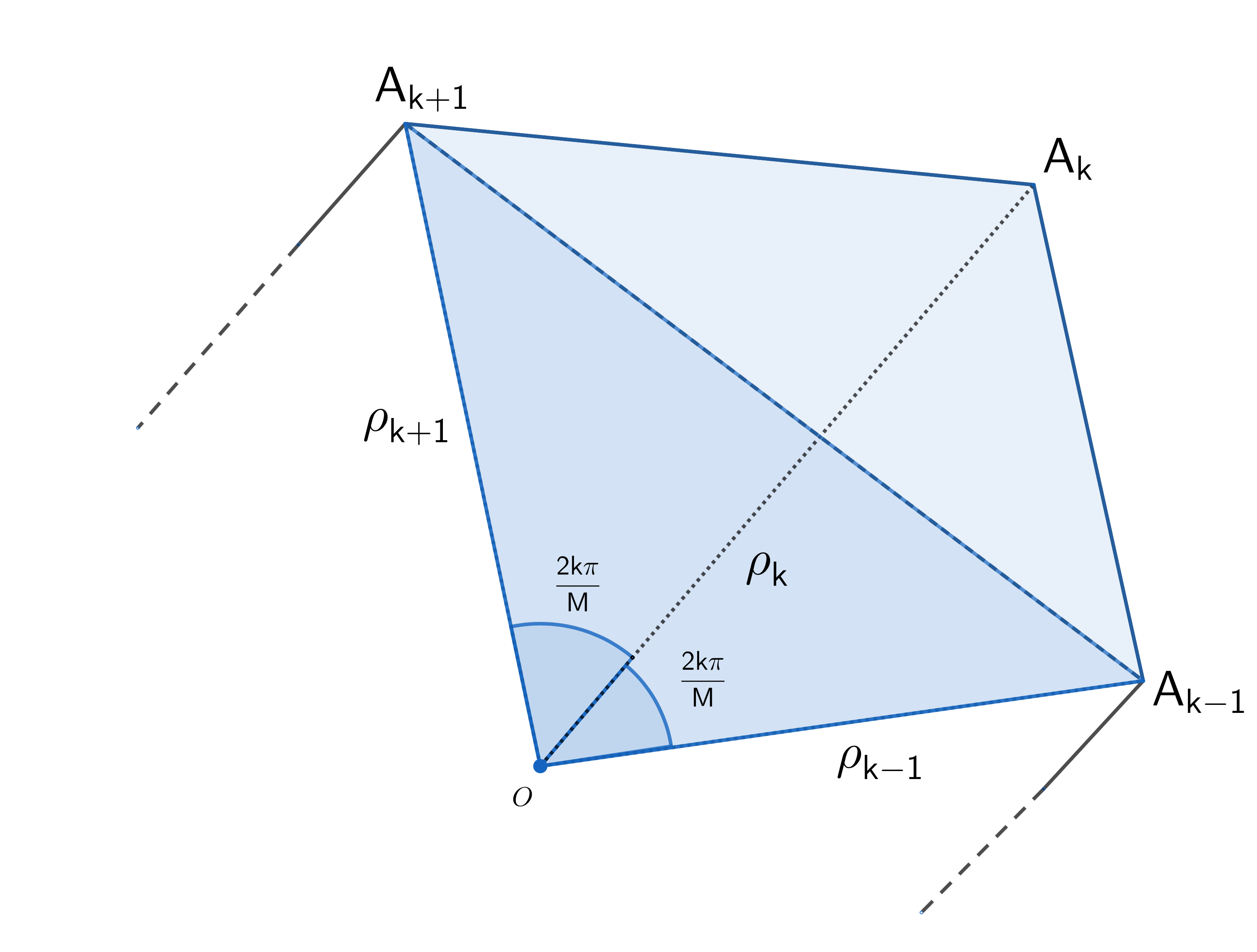}
    \caption{Convexity constraint via areas of the triangles.}
    \label{fig:convex_area}
\end{figure}
   
\end{center}

We have
\begin{equation*}
\left\{
\begin{array}{l}
  \mathcal{S}_{OA_{k-1}A_k} = \frac{1}{2}\rho_{k-1}\rho_k\sin{\frac{2k\pi}{M}}\vspace{2mm}\\
  \mathcal{S}_{OA_kA_{k+1}} = \frac{1}{2}\rho_{k}\rho_{k+1}\sin{\frac{2k\pi}{M}}\vspace{2mm}\\
  \mathcal{S}_{OA_{k-1}A_{k+1}} = \frac{1}{2}\rho_{k-1}\rho_{k+1}\sin{\frac{4k\pi}{M}}= \rho_{k-1}\rho_{k+1}\sin{\frac{2k\pi}{M}}\cos{\frac{2k\pi}{M}}.
\end{array}
\right.
\end{equation*}

Thus, the convexity constraint given by $\mathcal{S}_{OA_{k-1}A_k}+\mathcal{S}_{OA_kA_{k+1}}\ge \mathcal{S}_{OA_{k-1}A_{k+1}} $ is equivalent to the following quadratic constraint: 
\begin{equation}\label{ineq:conv_radial}
C_k:=2\cos{\left(\frac{2\pi}{M}\right)}\rho_{k-1}\rho_{k+1}-\rho_k(\rho_{k-1}+\rho_{k+1})\leq 0,    
\end{equation}
where $k\in \llbracket 0,M-1\rrbracket$. 

\subsubsection{Computation of the gradients}
Now that we brought the shape optimization problem to a finite dimensional optimization one, it remains to compute the gradients of the involved functionals and constraints. 

Let us take $\Om\subset \R^2$ a domain starshaped with respect to the origin $O$ and parametrized by $(\rho_k)_{k\in \llbracket 0,M-1\rrbracket}$.
For every $k\in \llbracket 0,M-1\rrbracket$, we denote by $V_{\rho_k}$ the perturbation field corresponding to the perturbation of the variable $\rho_k$. It is null on the whole boundary except on the sides $[A_{k-1}A_k]$ and $[A_k A_{k+1}]$, see Figure \ref{fig:perturbation_radial}.

\begin{center}
 \begin{figure}[ht]
  \centering
    \includegraphics[scale=0.4]{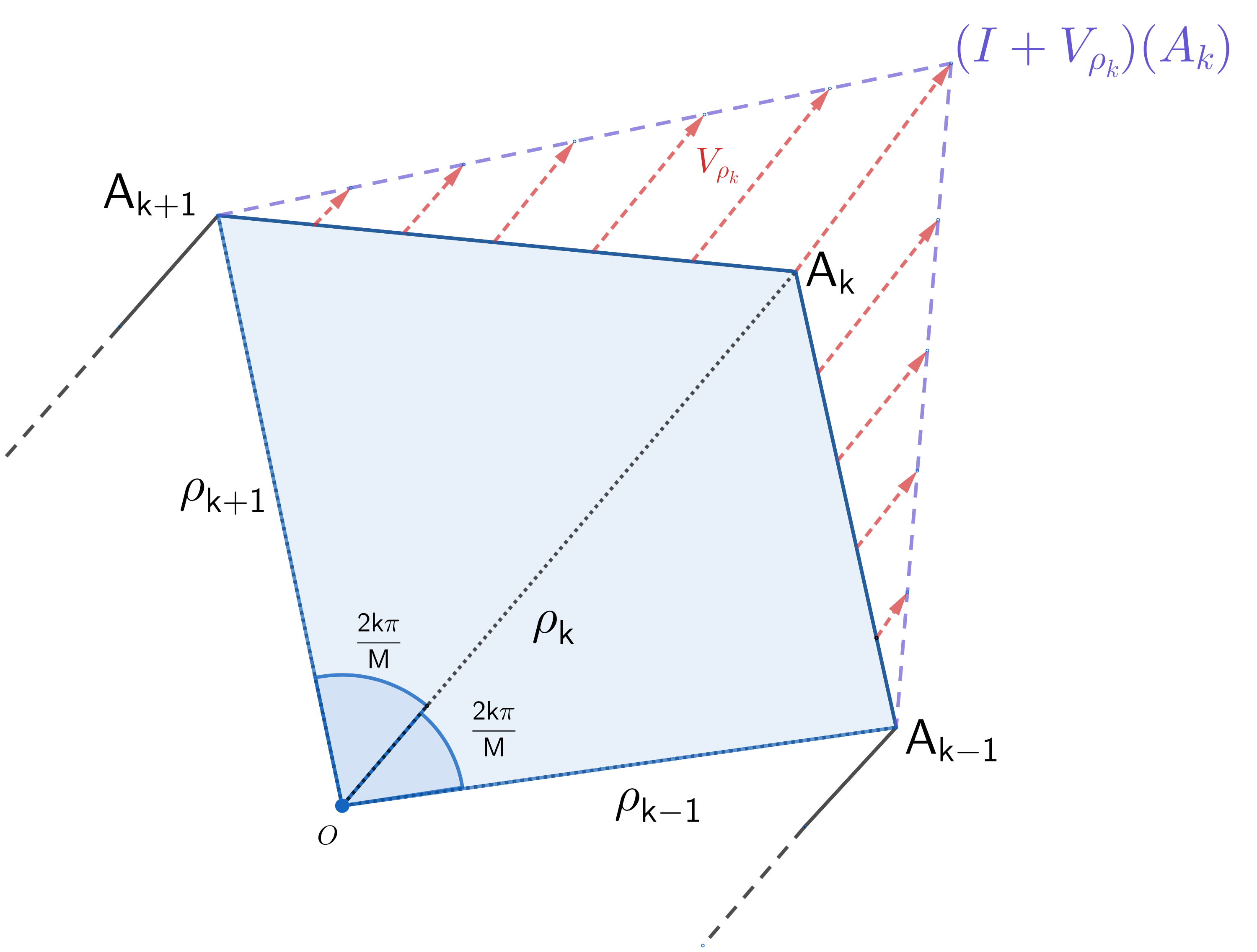}
    \caption{Perturbation field $V_{\rho_k}$.}
    \label{fig:perturbation_radial}
\end{figure}
\end{center}

Since we dispose of explicit formulae for the perimeter and the area, we can directly compute the corresponding shape gradients. We have for every $k\in \llbracket 0,M-1\rrbracket$,
$$A'(\Om,V_{\rho_k})= \frac{\sin{\left(\frac{2\pi}{M}\right)}}{2}\cdot (\rho_{k-1}+\rho_{k+1}),$$
and
$$P'(\Om,V_{\rho_k})= \frac{\rho_k-\rho_{k-1}\cos{\left(\frac{2\pi}{M}\right)}}{\sqrt{\rho_{k-1}^2+\rho_k^2-2\rho_{k-1}\rho_k\cos{\left(\frac{2\pi}{M}\right)}}}+\frac{\rho_k-\rho_{k+1}\cos{\left(\frac{2\pi}{M}\right)}}{\sqrt{\rho_{k+1}^2+\rho_k^2-2\rho_{k+1}\rho_k\cos{\left(\frac{2\pi}{M}\right)}}}.$$

For the eigenvalue, we use as before Hadamard's formula $$\lambda_1'(\Om,V)= -\int_{\partial \Om} |\nabla u_1|^2\langle V,n\rangle d\sigma,$$
where $u_1\in H^1_0(\Om)$ is a normalized eigenfunction (i.e. $\|u_1\|_2=1$) corresponding to $\lambda_1(\Om)$ and $V$ is a perturbation field. 

For every $k\in \llbracket 0,M-1\rrbracket$, we discretize the side $[A_k A_{k+1}]$ in $\ell$ small segments of length $\frac{A_k A_{k+1}}{\ell}$ centered in some points $B_k^i\in [A_k A_{k+1}]$. We then compute approximations of the gradients as follows
$$\lambda_1'(\Om,V_{\rho_k})\approx -\sum\limits_{i=1}^\ell \left(|\nabla u_1|^2(x_{B_k^i},y_{B_k^i}) \langle V_{\rho_k}(x_{B_k^i},y_{B_k^i}),n_{k}\rangle \frac{d\sigma_k}{\ell}-|\nabla u_1|^2(x_{B_{k-1}^i},y_{B_{k-1}^i}) \langle V_{\rho_k}(x_{B_{k-1}^i},y_{B_{k-1}^i}),n_{k-1}\rangle \frac{d\sigma_{k-1}}{\ell}\right),$$
with
\begin{itemize}
    \item the conventions $A_M:=A_0$ and $A_{-1}:=A_{M-1}$ (in particular $\rho_M:=\rho_0$ and $\rho_{-1}:=\rho_{M-1}$),
    \item $d\sigma_k=\sqrt{(x_{A_k}-x_{A_{k+1}})^2+(y_{A_k}-y_{A_{k+1}})^2}$,
    \item the points $(B_k^i)_{i\in \llbracket 1,M\rrbracket }$ given by $$\forall i\in \llbracket 1,\ell \rrbracket,\ \ \ \  B_k^i:= \left(1-\frac{i}{2\ell}\right)A_k+ \frac{i}{2\ell} A_{k+1},$$  
    \item $n_k:= \frac{1}{d\sigma_k}\binom{-(y_{A_k}-y_{A_{k+1}})}{x_{A_k}-x_{A_{k+1}}}$ is the exterior unit vector normal to the segment $[A_k A_{k+1}]$. 
\end{itemize}

Finally, for the diameter, we use the following shape derivative formula obtained in Theorem \ref{th:diametre}: 
$$d'(\Om,V)=\max\left\{\left\langle\frac{x-y}{|x-y|},V(x)-V(y)\right\rangle\ \Big|\ x,y \in \Omega,\ \text{such that}\  |x-y|=d(\Om)\right\}.$$

 \color{black}
\begin{remark}
    Although the radial and the gauge functions are related via the simple relation $g_\Om=1/\rho_\Om$, the method presented in this section is different from the one based on the Fourier coefficients of the gauge function presented in \ref{ss:gauge_function}. Indeed, in the present section, we propose to optimize the values $\rho_i=\rho_\Om(\theta_i)$, where $(\theta_i)_{i\in\llbracket 1 , N \rrbracket}$ is a uniform discretization of the interval $[0,2\pi)$. In contrast with the setting of Section \ref{ss:gauge_function}, where the convexity is characterized by linear constraints on the Fourier coefficients, in the present section is characterized by quadratic constraints \eqref{ineq:conv_radial}. 
\end{remark}
\color{black}
 
\subsection{Computations of the functionals and numerical optimization}\label{ss:numerical_computations}
Let us give few words on the numerical computation of the functionals. In all the parametrizations above we dispose of analytical formulas that provide good approximations of the area and the perimeter. Let us give some elements on the computations of the remaining functionals involved in the paper.
\begin{itemize}
    \item The first Dirichlet eigenvalue is computed by the "Partial Differential Equation Toolbox" of Matlab that is based on finite elements methods. 
    \item As explained above, the computation of the diameter depends on the choice of the parametrization: indeed, when parameterizing a convex $\Om$ via its support function $h_\Om$, it is given by $d(\Om)=\max\limits_{\theta\in [0,2\pi]} \big(h_\Om(\theta)+h_\Om(\pi+\theta)\big)$, meanwhile, when using a polygonal approximation, we compute the diameter of the convex hull via a fast method of computation (with complexity $\O(M)$, where $M$ is the number of vertices), which consists of finding all antipodal pairs of points and looking for the diametrical between them. This is classically known as Shamos algorithm \cite{MR805539}.
\end{itemize}

As for the optimization, we used Matlab's \texttt{fmincon} function with the \texttt{interior-point} and/or \texttt{sqp} algorithms. 



\section{Improved numerical description of Blaschke--Santal\'o diagrams}\label{s:bs_diagrams} 
In the present section, we show how the application of the different methods described in Section \ref{s:numerical_shape_optimization_methods} can be used to obtain improved descriptions of the boundaries of the diagrams $\D_1$, $\D_2$ and $\D_3$ introduced in Section \ref{s:introduction}. 

\subsection{The diagram \texorpdfstring{$\D_1$}{D1}  of the triplet \texorpdfstring{$(P,A,d)$}{(P,A,d)}}
\subsubsection{Naive approach and classical results}
We are interested in studying the diagram
$$\D_1:= \{(P(\Om),A(\Om))\ |\ \Om\in \K\ \text{and}\ d(\Om)=1\}.$$  
This diagram is (as far as we know) one of the unsolved diagrams introduced by Santal\'o in \cite{santalo}, but we note that there are quite advanced results on the characterization of its boundary: 
\begin{itemize}
    \item in \cite{MR2410988}, the authors solve the problem corresponding to the upper boundary, namely they prove that the problem $$\max \{A(\Om)\ |\ \Om\in \K,\ P(\Om) = d_0\ \text{and}\ d(\Om)=1\},$$
where $d_0\in (2,\pi]$, is solved by symmetric lenses (that are given by the intersection of two disks with the same radius) of diameter $1$ and perimeter $d_0$.
    \item In \cite{MR1544679}, the author manages to describe the lower boundary of the diagram that corresponds to perimeters $d_0\in (2,3]$, he shows that the optimal domains are given by \textit{subequilateral triangles} (i.e., isosceles triangles whose smaller inner angle is less than $\frac{\pi}{3}$). 
    \item At last, there is the famous Blaschke–Lebesgue's Theorem, named after W. Blaschke and H. Lebesgue, which states that the Reuleaux triangle (see Figure \ref{fig:adp1}) has the least area of all domains of given constant width. It is classical that sets of constant width have the same perimeter, thus in the diagram, those sets fill the vertical line $\left\{\pi\right\}\times [\frac{1}{2}(\pi-\sqrt{3}),\frac{\pi}{4}]$, see Figure \ref{fig:adp1}.
\end{itemize}


In Figure \ref{fig:adp1}, we plot the points corresponding to the extremal sets described above and a cloud of dots obtained by randomly generating $10^5$ polygons whose numbers of sides are in $\llbracket 3,30\rrbracket$. 
\begin{center}
 \begin{figure}[ht]
  \centering
    \includegraphics[scale=0.6]{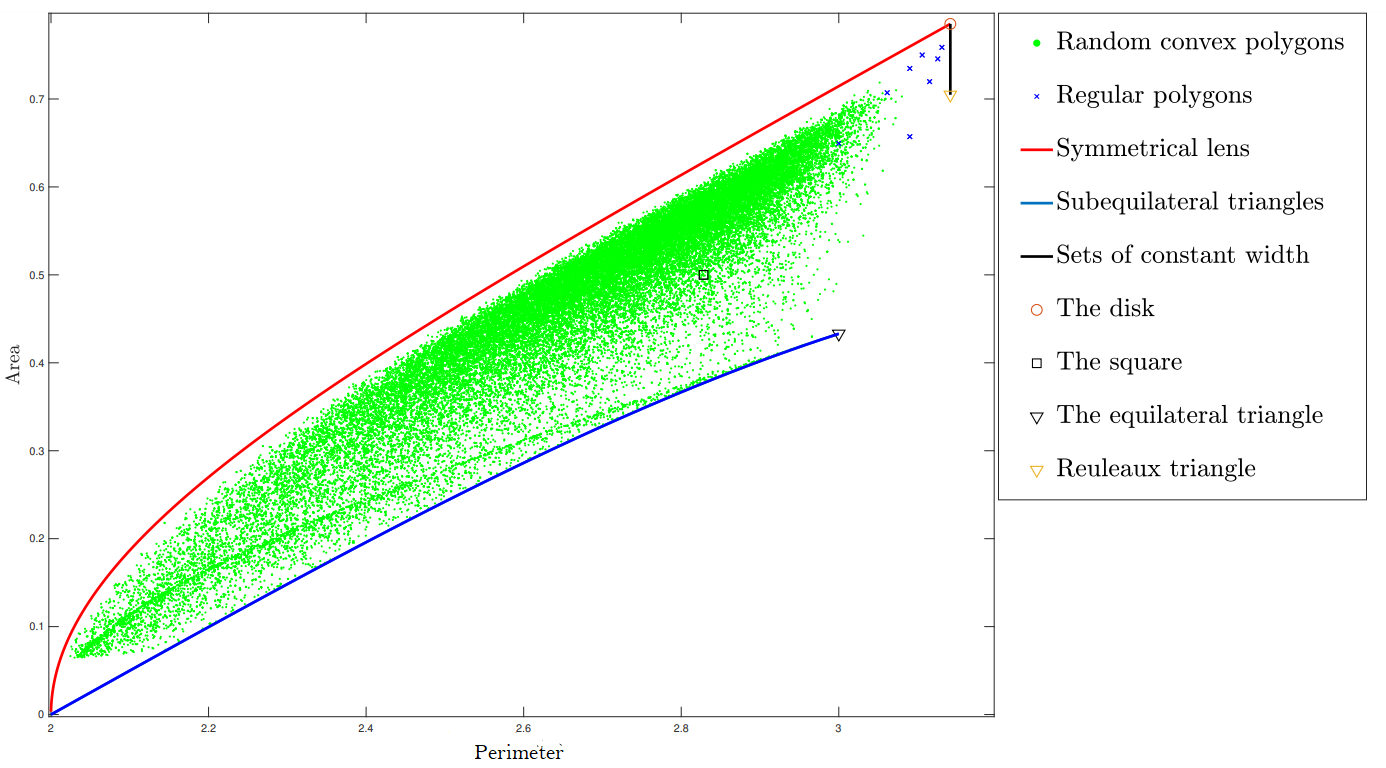}
    \caption{Approximation of the diagram $\D_1$ via random convex sets and some relevant shapes.}
    \label{fig:adp1}
\end{figure}
\end{center}

\subsubsection{On the numerical approximation of the extremal shapes}
We use the methods of Section \ref{s:numerical_shape_optimization_methods}, in order to obtain a numerical approximation of the missing boundary (which should be connecting the equilateral and Reuleaux triangles in Figure \ref{fig:adp1}). 

We numerically solve the following shape optimization problems: 
\begin{equation}\label{prob:minADP}
\min\backslash \max\{A(\Om)\ |\ \Om\in \K,\ P(\Om) = d_0\ \text{and}\ d(\Om) = 1\},    
\end{equation}
where $d_0\in (3,\pi/4)$. 

The parametrization via Fourier coefficients of the support function (Section \ref{ss:support_function}) provides quite satisfying results. Indeed, we obtain symmetrical lenses (see Figure \ref{fig:lens}) as optimal shapes (which is in concordance with the result proved in \cite{MR2410988}). 

\begin{center}
 \begin{figure}[!ht]
  \centering
    \includegraphics[scale=0.35]{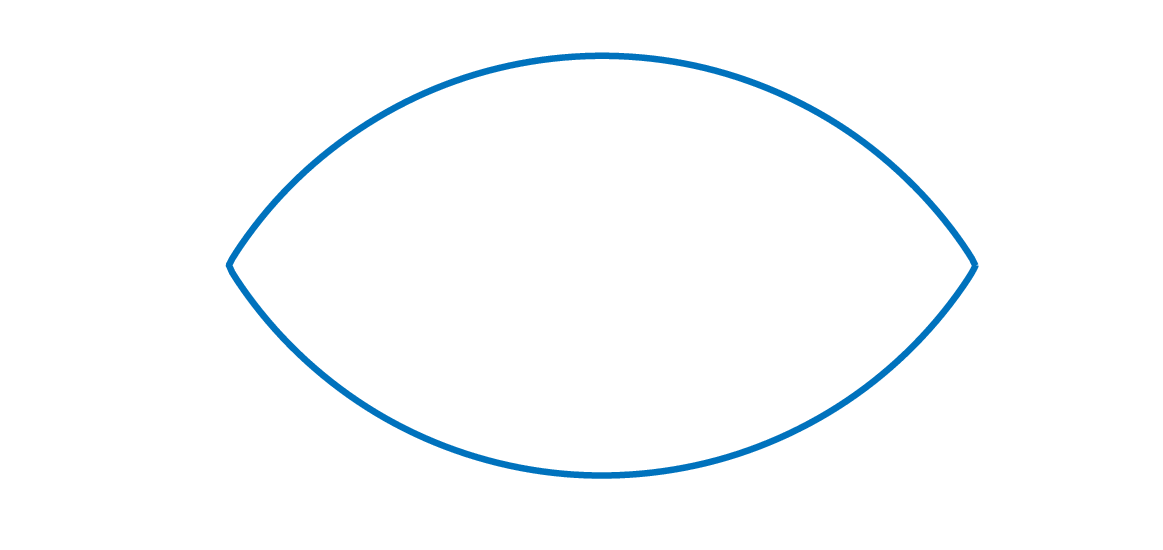}
    \caption{A symmetrical lens obtained as a solution of the problem $ \max\{A(\Om)\ |\ \Om\in \K,\ P(\Om) = 2.4\ \text{and}\ d(\Om) = 1\}$.}
    \label{fig:lens}
\end{figure}
\end{center}

As for the lower boundary, to obtain good approximations, we combine the two methods of sections \ref{ss:support_function} and \ref{ss:radial_function}. We first use the parametrization via Fourier coefficients of the support function truncated at a certain order $N$ to find first approximations of the extremal sets that will be used as initial shapes for the parametrization using radial function. We note that by this process we are able to obtain quite accurate description of the lower boundary, see Figure \ref{fig:improved_apd}. 

In a first time, as explained in section \ref{ss:support_function}, by using the parametrization via the Fourier coefficients of the support function, the Problem  \eqref{prob:minADP} is reduced to the following finite dimensional minimization problem: 
$$\min\limits_{(a_0,...,b_N)\in \R^{2N+1}} \ \ \left(\pi a_0^2 + \frac{\pi}{2}\sum_{k=1}^N (1-k^2)(a_k^2+b_k^2)\right),$$
with linear constraints on the Fourier coefficients: 
\begin{itemize}
    \item a perimeter constraint: $2\pi a_0 = d_0$,
    \item and the convexity constraint: $$\begin{pmatrix}
    1 & \alpha_{1,1} & \cdots & \alpha_{1,N} & \cdots & \beta_{1,1} & \cdots & \beta_{1,N}\\
     \vdots & \vdots & \ddots & \vdots & \cdots & \vdots & \ddots & \vdots\\
    1 & \alpha_{N,1} & \cdots & \alpha_{N,N} & \cdots & \beta_{N,1} & \cdots & \beta_{N,N}
    \end{pmatrix} 
    \begin{pmatrix}
    a_0\\
    a_1\\
     \vdots \\
    a_N\\ 
    b_1\\
    \vdots\\ 
    b_N
    \end{pmatrix} \ge \begin{pmatrix}
    0\\
     \vdots \\
    0
    \end{pmatrix},$$
    where $\alpha_{m,k}=(1-k^2)\cos{k\theta_m}$ and $\beta_{m,k}=(1-k^2)\sin{k\theta_m}$ for $(m,k)\in \llbracket 1,M\rrbracket\times \llbracket 1,N\rrbracket$, with $M$ taken to be equal to 1000. 
\end{itemize}

Before showing the obtained results, let us first analyze the accuracy of the present method (based the support function): we solve the latter optimization problem for different values of the parameter $N$ in the case $d_0=3$ for which we know that the optimal shape is given by the equilateral triangle.  

In Figure \ref{fig:triangles}, we present the optimal shapes obtained for the choices of $N\in \{20,40,100,140\}$:

 \begin{figure}[ht]
 \centering
    \includegraphics[scale=0.2]{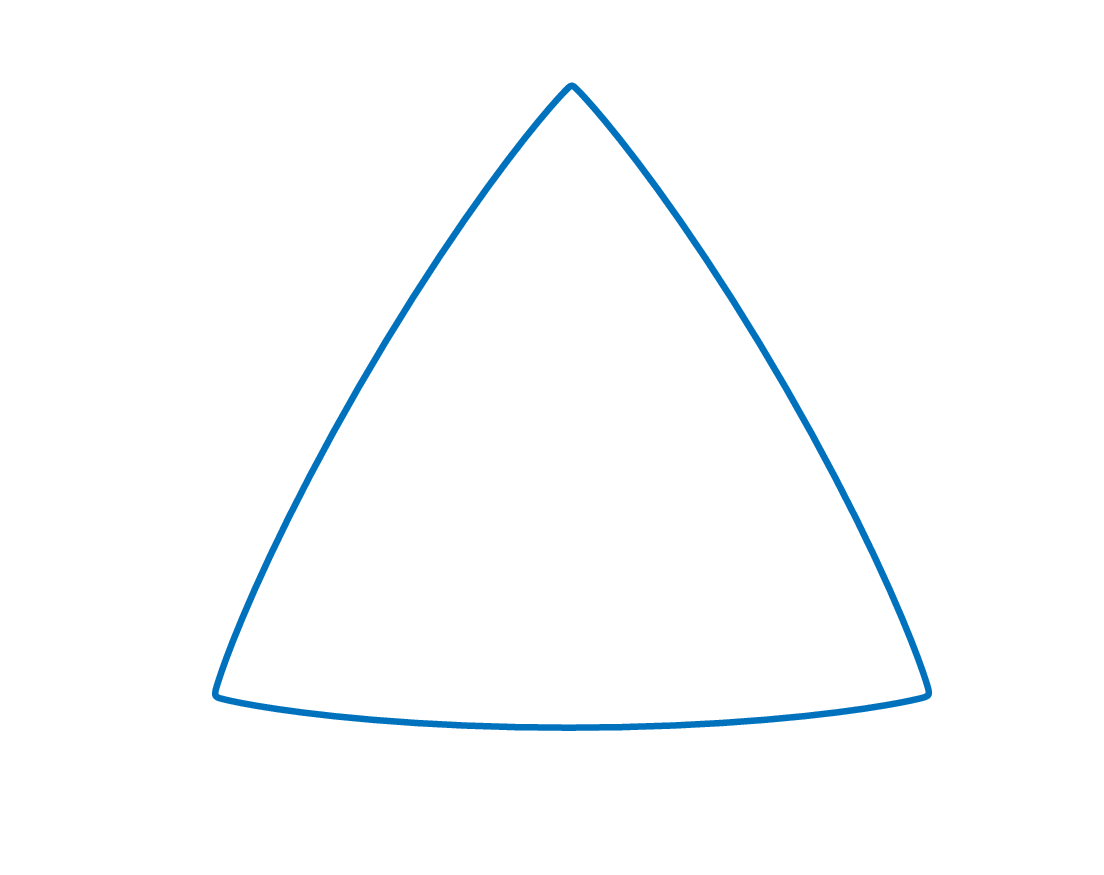} \includegraphics[scale=0.2]{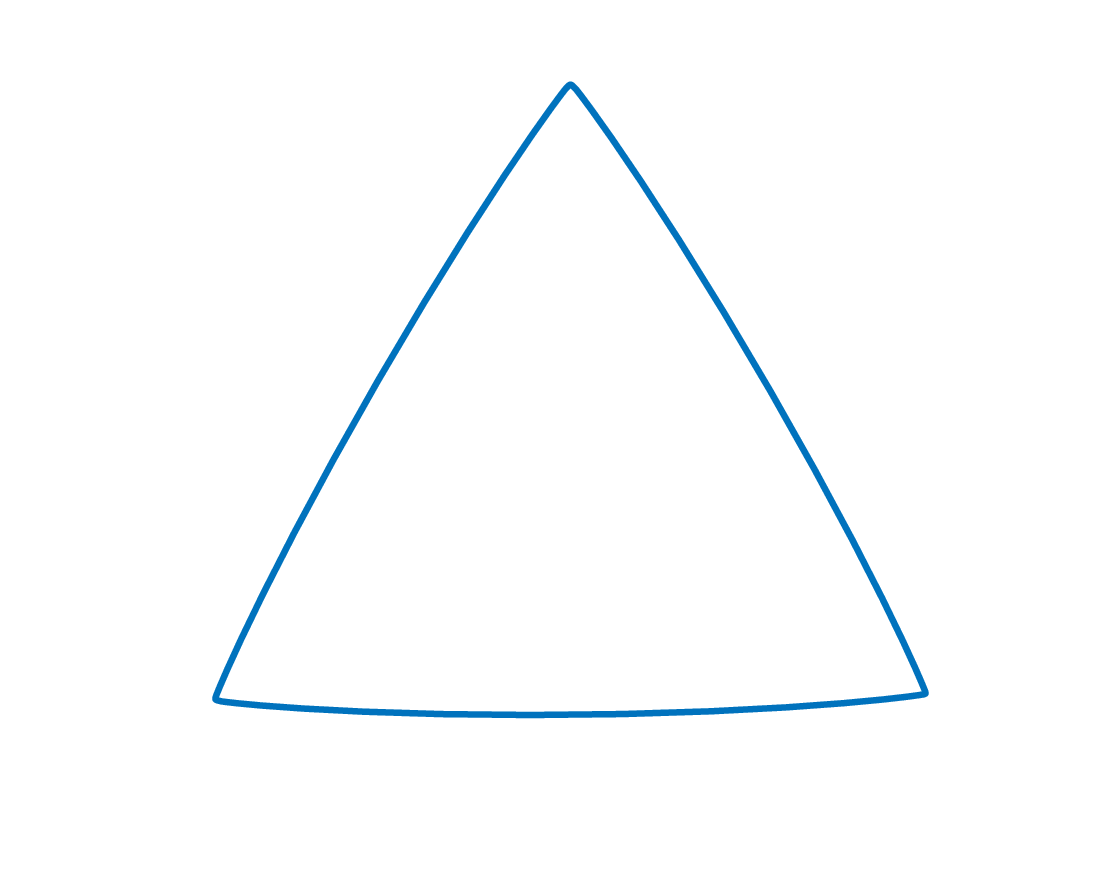}
    \includegraphics[scale=0.2]{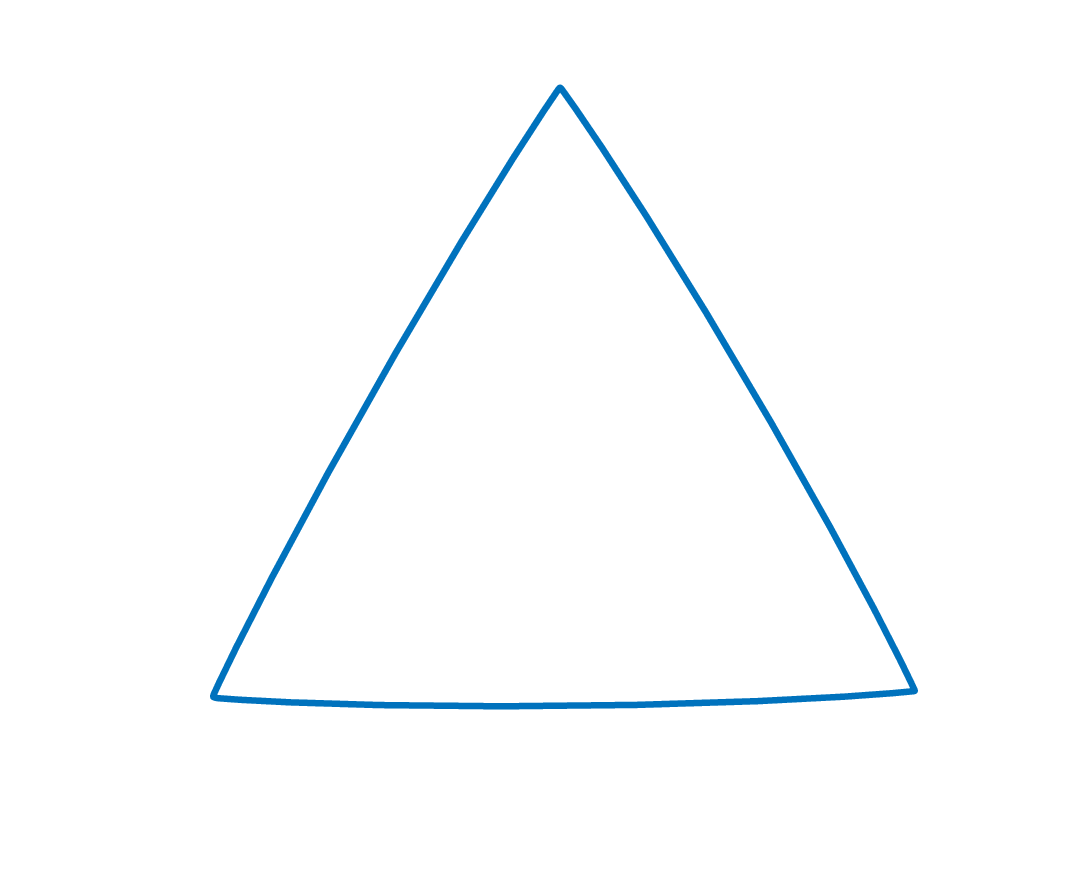}
    \includegraphics[scale=0.2]{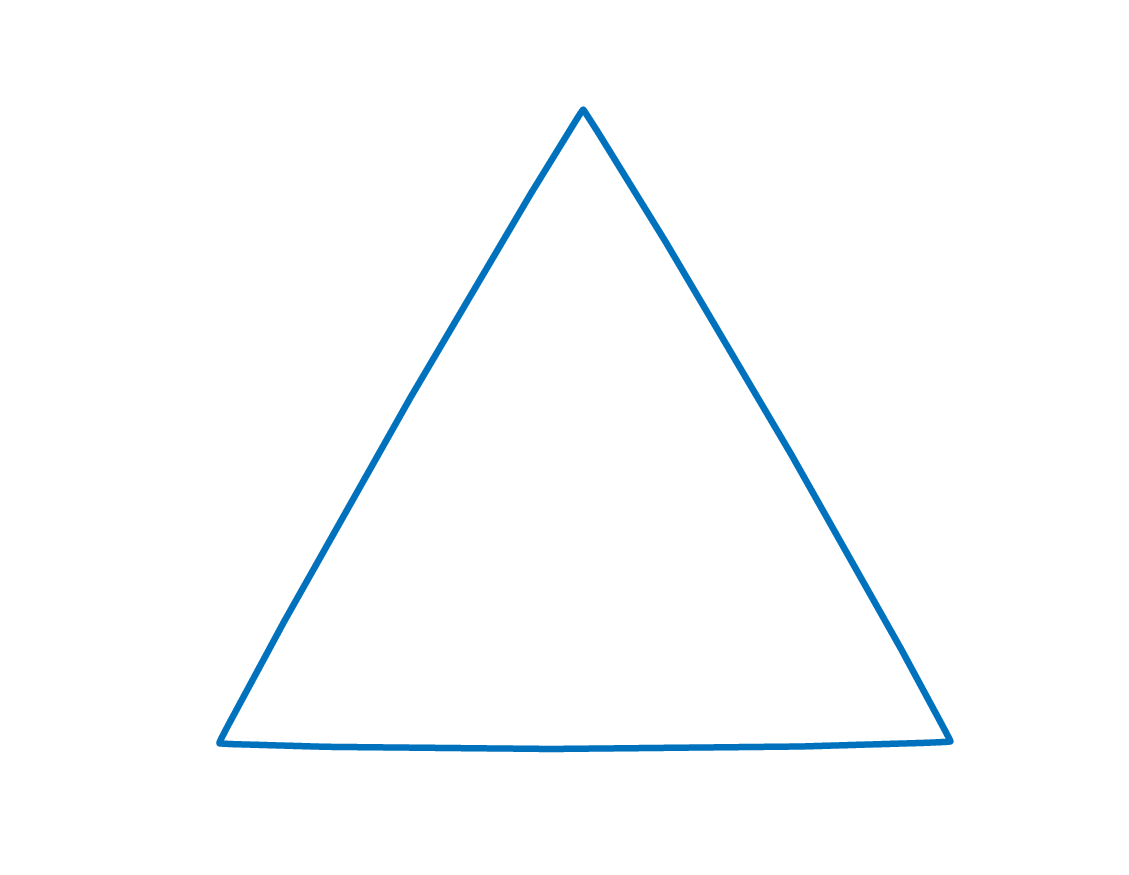}
    \caption{Obtained solutions for $d_0=3$ and $N\in \{20,40,100,140\}$ (approximation of an equilateral triangle).}
    \label{fig:triangles}
\end{figure}

In the Figure \ref{fig:error}, we plot the relative errors  in function of the order of truncation $N$. It shows that the method based on the support function is not very relevant when the optimal shape is polygonal (which can frequently be the case when imposing convexity constraint). 
 \begin{figure}[ht]
 \centering
    \includegraphics[scale=0.3]{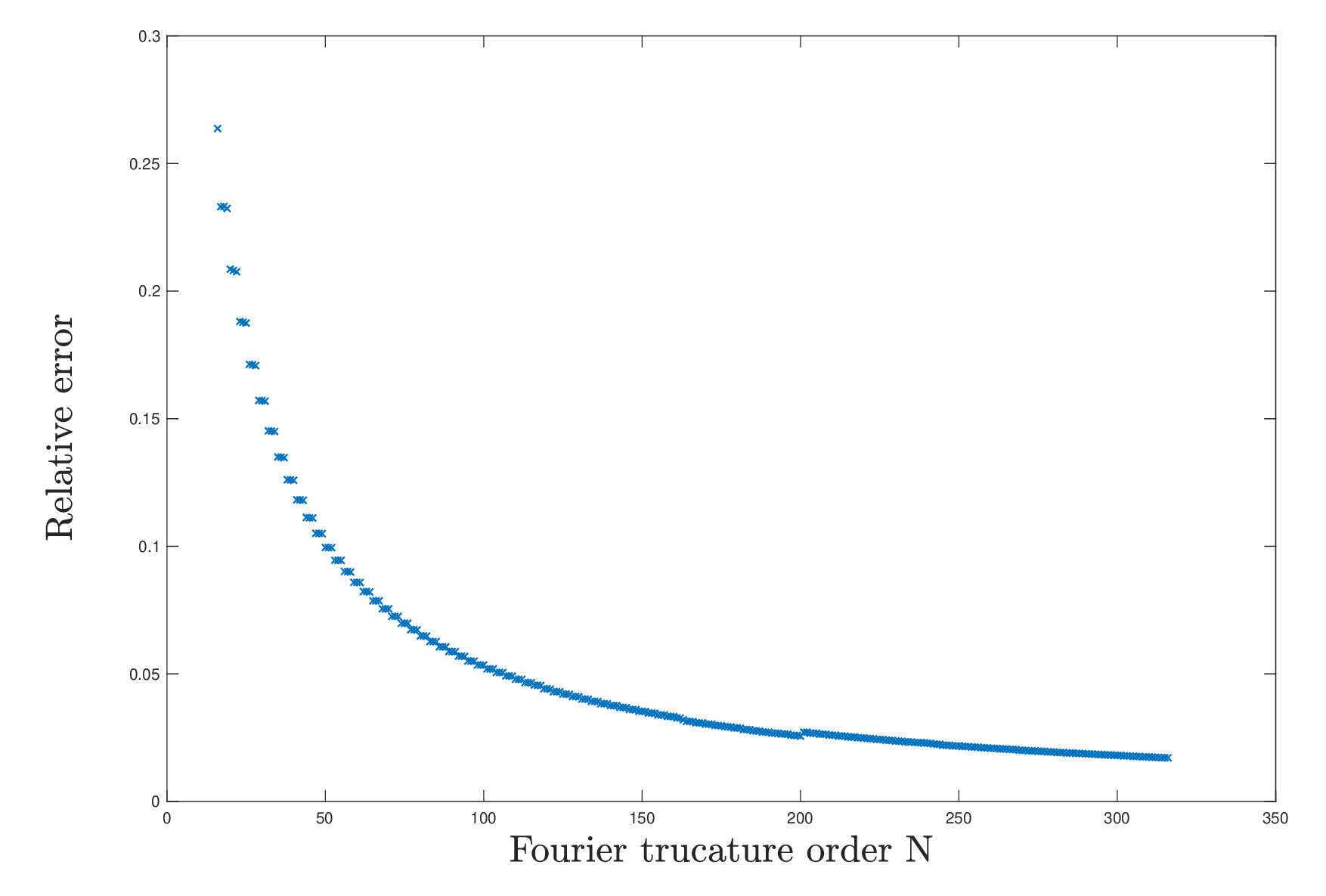}
    \caption{Relative errors in function of the truncation order $N$ in the case $d_0=3$.}
    \label{fig:error}
\end{figure}

We then obtain (see Figure \ref{fig:approx_support}) an approximation of the missing lower boundary corresponding to domains obtained by considering $401$ Fourier coefficients ($N=200$). 
 \begin{figure}[!ht]
 \centering
    \includegraphics[scale=0.5]{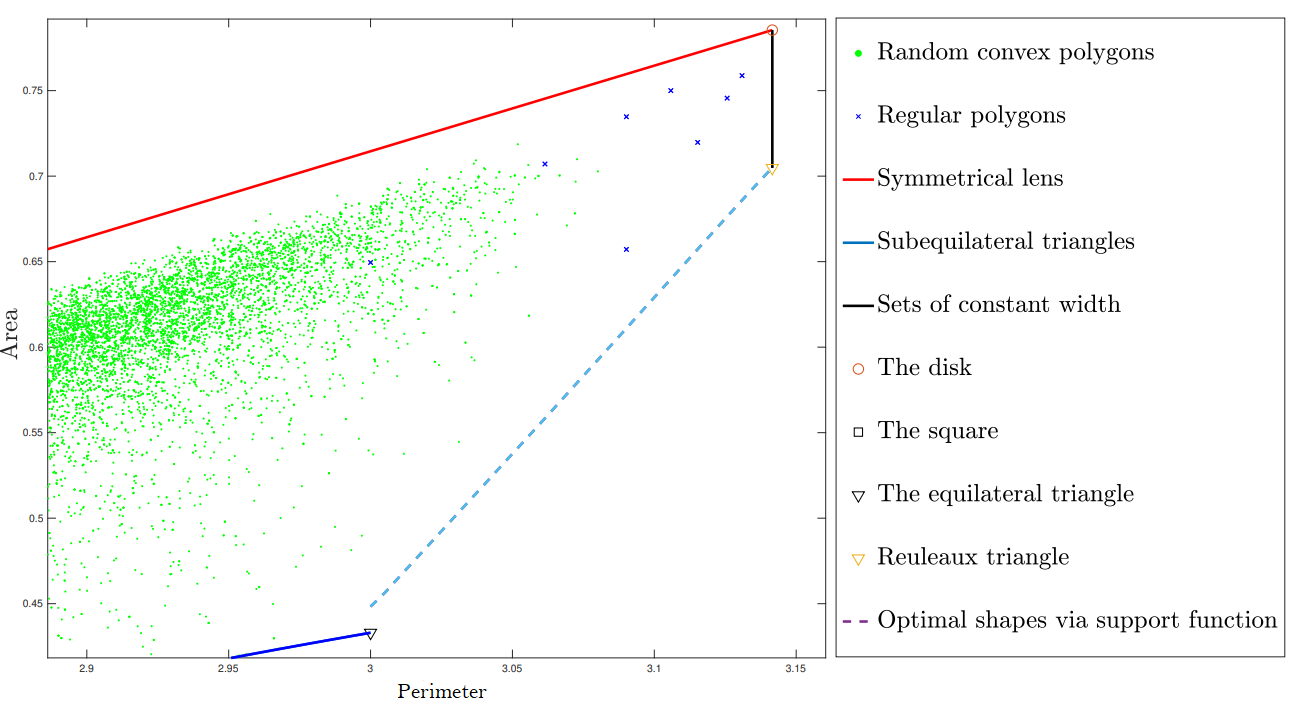}
    \caption{Approximation of the missing part of the lower boundary by optimizing the Fourier coefficients of the support function.}
    \label{fig:approx_support}
\end{figure}

\pagebreak
We then use the domains obtained as initial shapes for the radial function-based method (see Section \ref{ss:radial_function}) and find better shapes that improve the description of the missing part in the lower boundary of the diagram $\D_1$ (see Figures \ref{fig:comparison} and \ref{fig:improved_apd}).

\begin{figure}[!ht]
    \centering
\begin{tabular}{ | m{3cm} | m{4cm}| m{4cm} | } 
  \hline
  Method & Support function & Radial function  \\ 
  \hline
 Obtained shape for $d_0=3.07$ in the minimization problem \eqref{prob:minADP} & \includegraphics[scale=0.22]{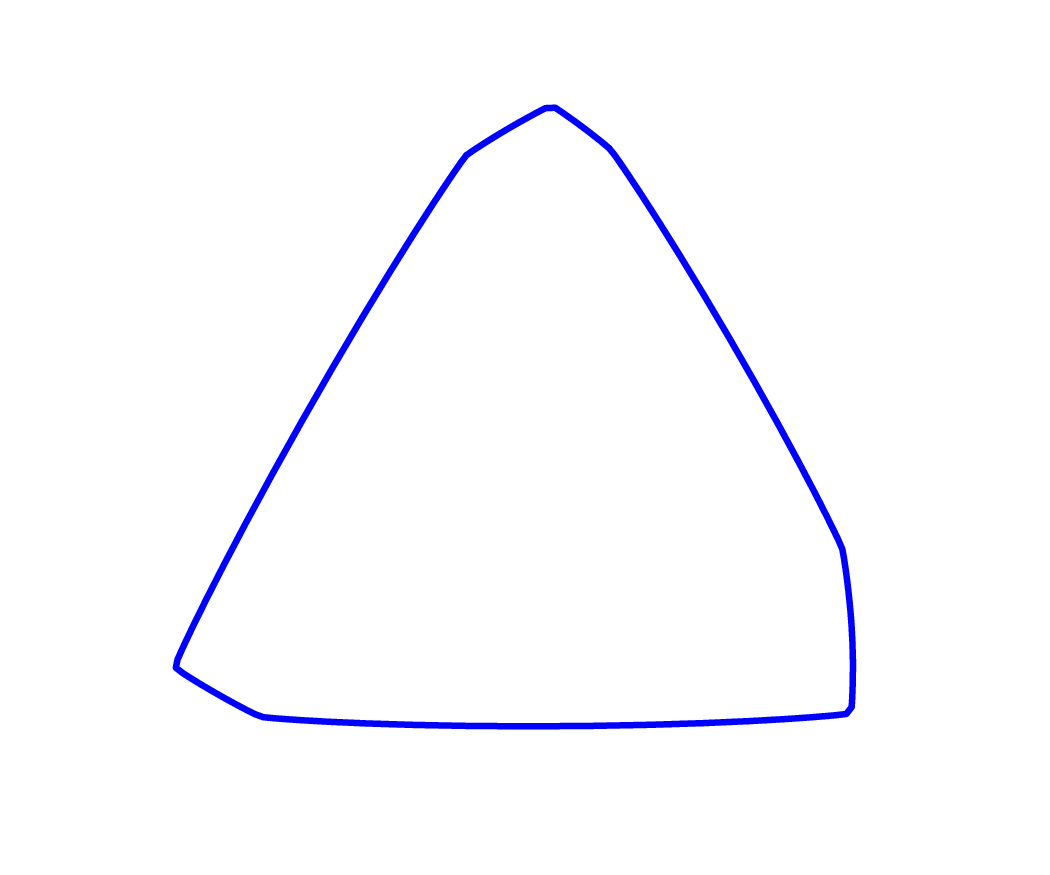} & \includegraphics[scale=0.23]{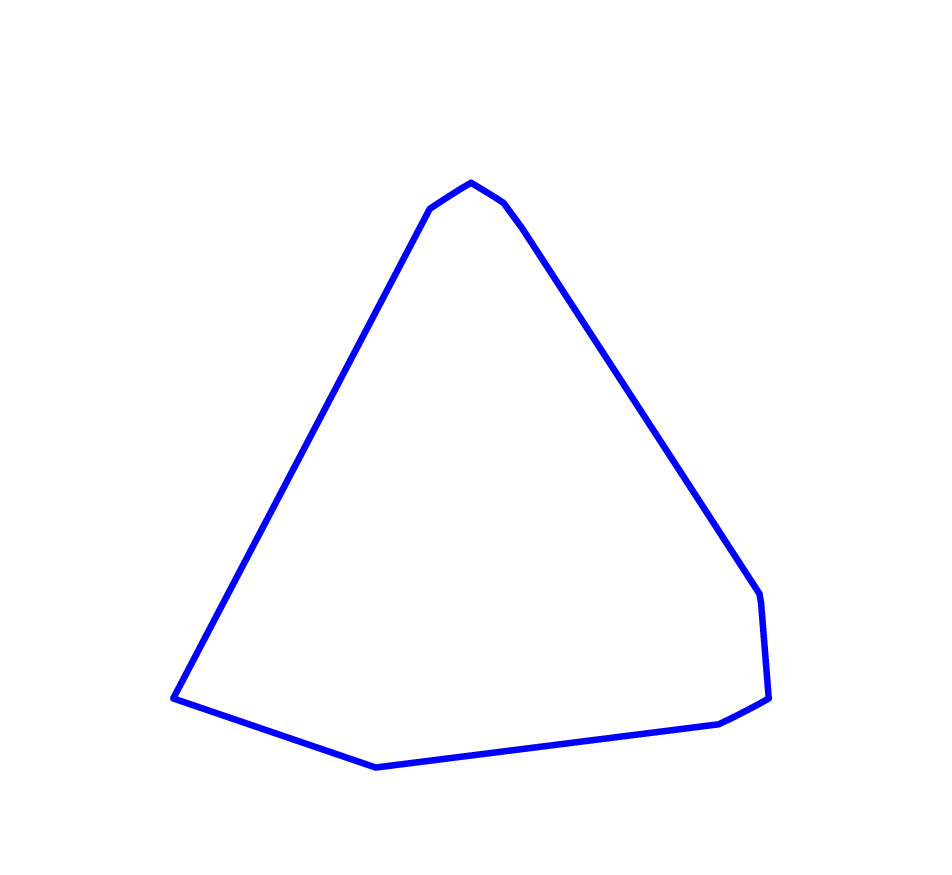} \\ 
  \hline
  Corresponding area &  0.5881 &  0.5687 \\ 
  \hline
\end{tabular}
    \caption{The radial function parametrization allows to improve the result of the support function method.}
    \label{fig:comparison}
\end{figure}

 \begin{figure}[!ht]
 \centering
    \includegraphics[scale=0.27]{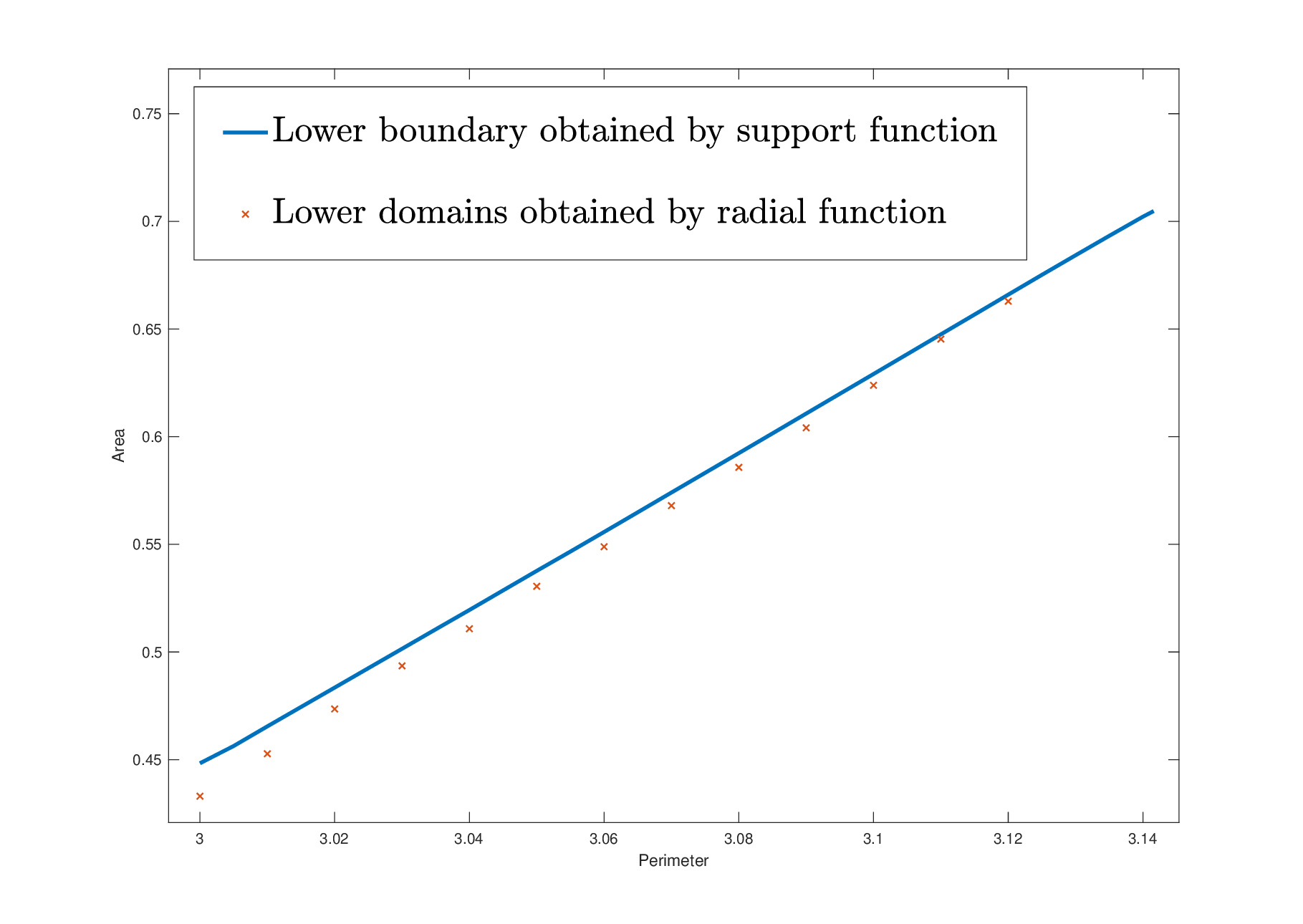}
    \caption{Improved description of the lower boundary by combining the two methods.}
    \label{fig:improved_apd}
\end{figure}

\begin{remark}
We refer to the recent work \cite{bogosel:hal-03607776}, where the author proposes an improvement of the methods based on the Fourier coefficients of the support and gauge functions of convex sets (described in sections \ref{ss:gauge_function} and \ref{ss:support_function}), in such a way to tackle the issue related to the appearance of segments or corners in the optimal shape. 
\end{remark}

\subsubsection{Extremal shapes and improved description of the diagram}

At last, we provide some extremal shapes obtained for relevant values of $d_0$ in Figure \ref{fig:optimal_adp} and improved description of the diagram $\D_1$ in Figure \ref{fig:improved_ADP}. 
\begin{figure}[!ht]
    \centering
\begin{tabular}{|c|c|c|c|c|c|}
\hline
 Problem & $d_0=\pi$ & $d_0=3.07$ & $d_0=3$& $d_0=2.4$\\
\hline
Upper boundary & \includegraphics[scale=0.18]{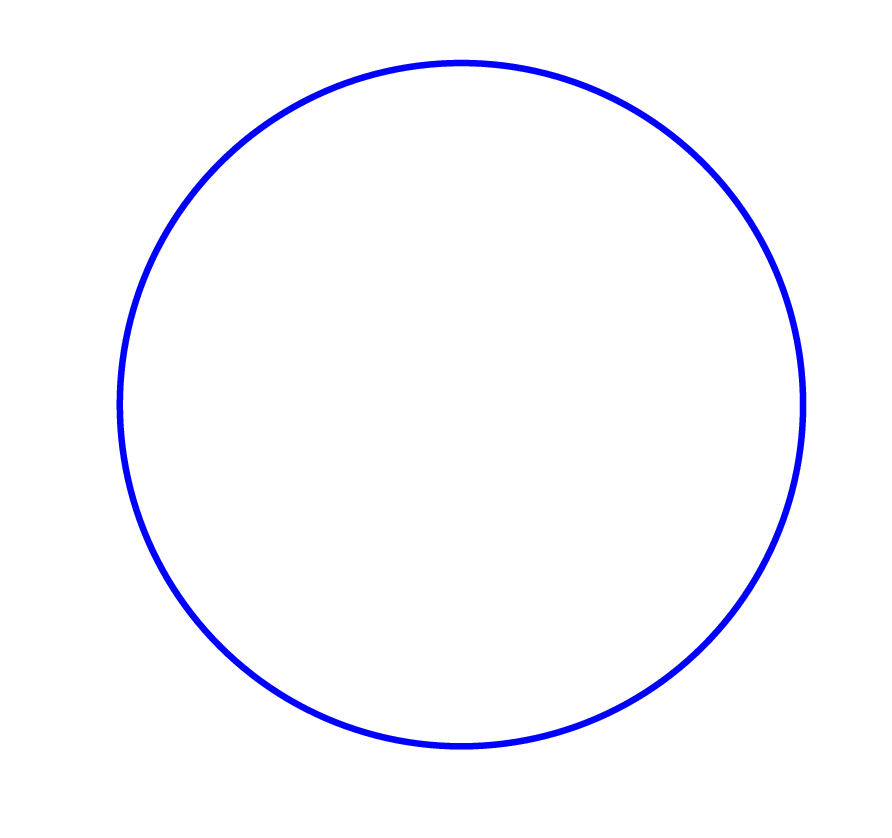} & \includegraphics[scale=0.17]{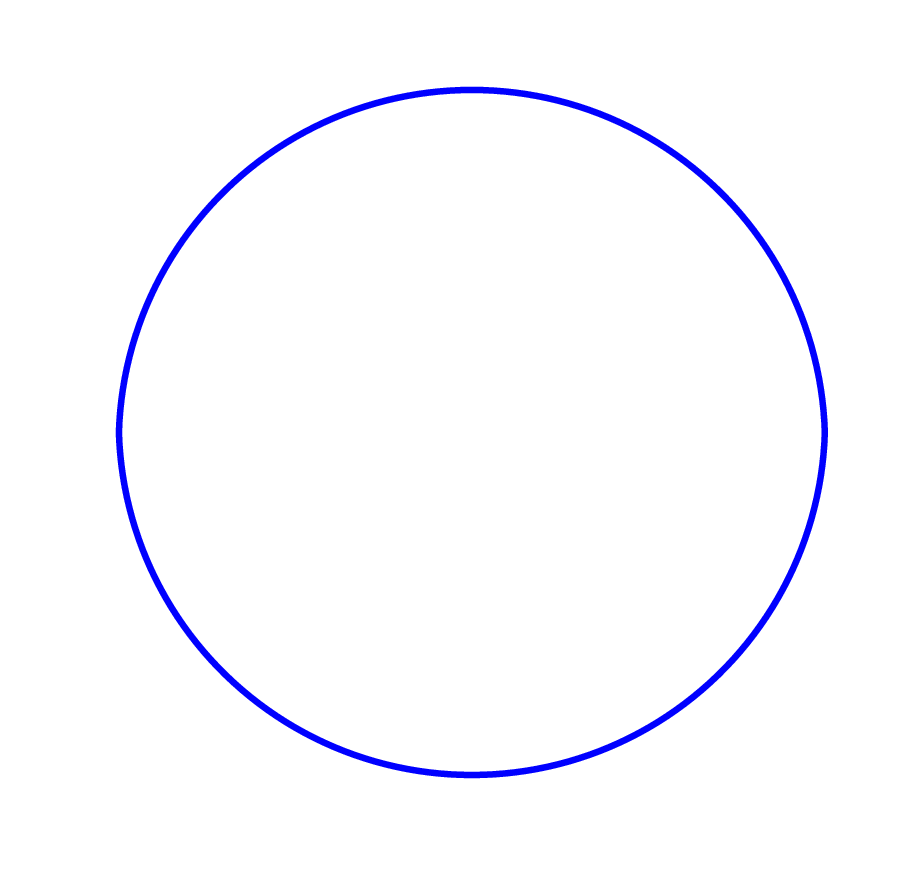}&  \includegraphics[scale=0.17]{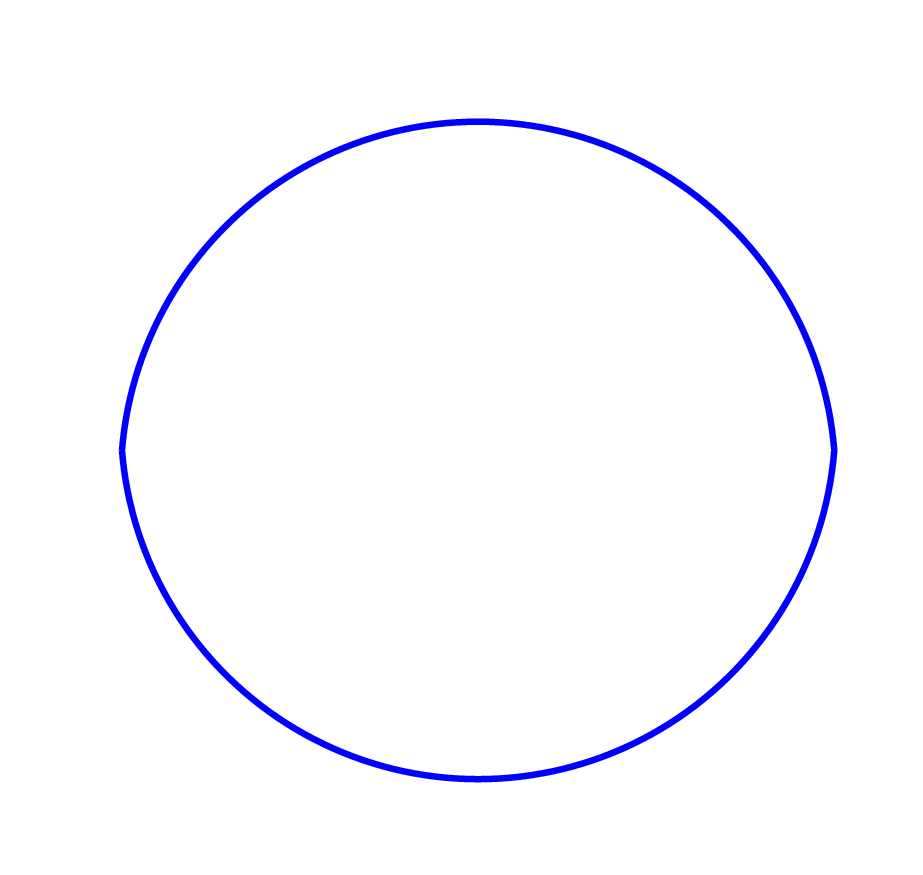}& \includegraphics[scale=0.17]{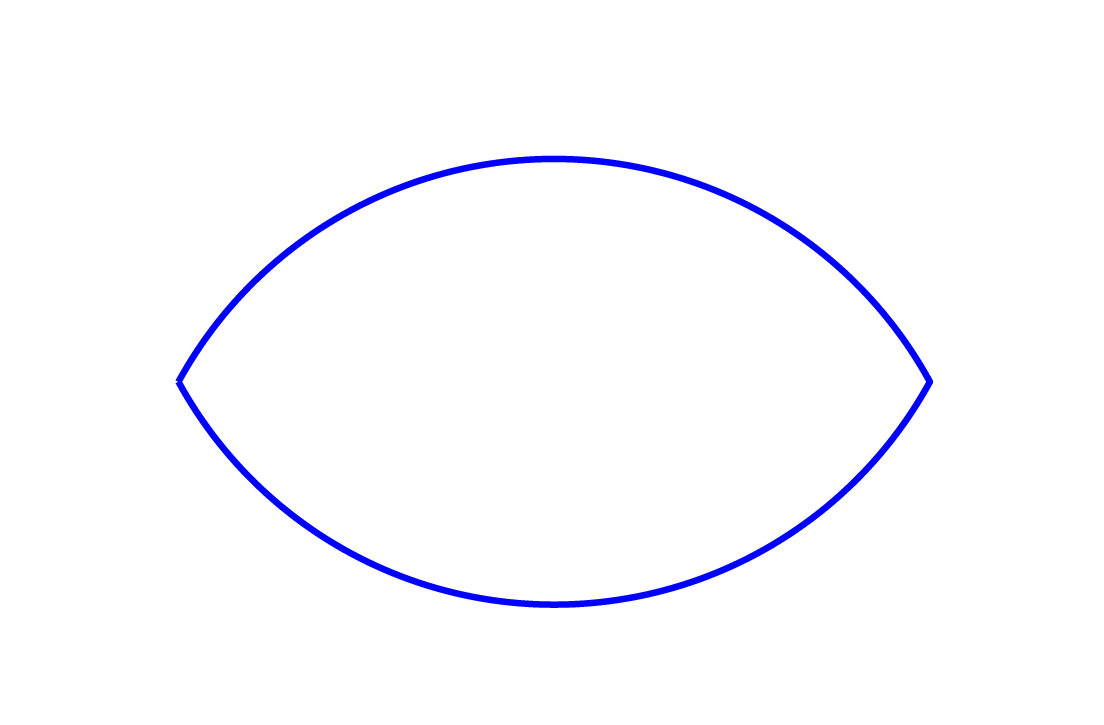} \\
\hline
Lower boundary &\includegraphics[scale=0.2]{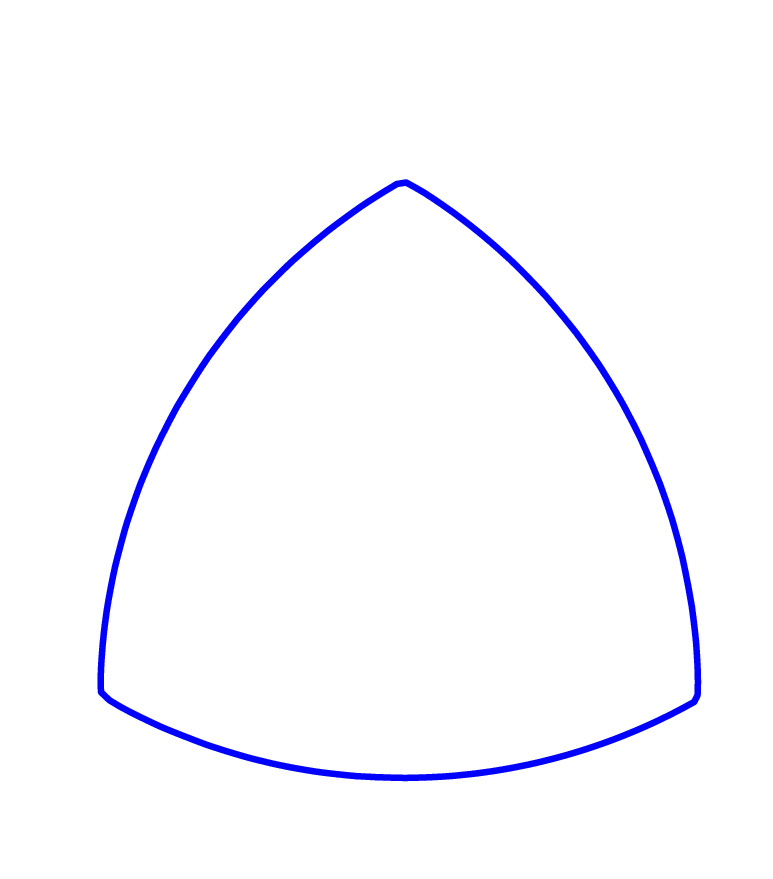}& \includegraphics[scale=0.2]{radial_vs.eps}& \includegraphics[scale=0.135]{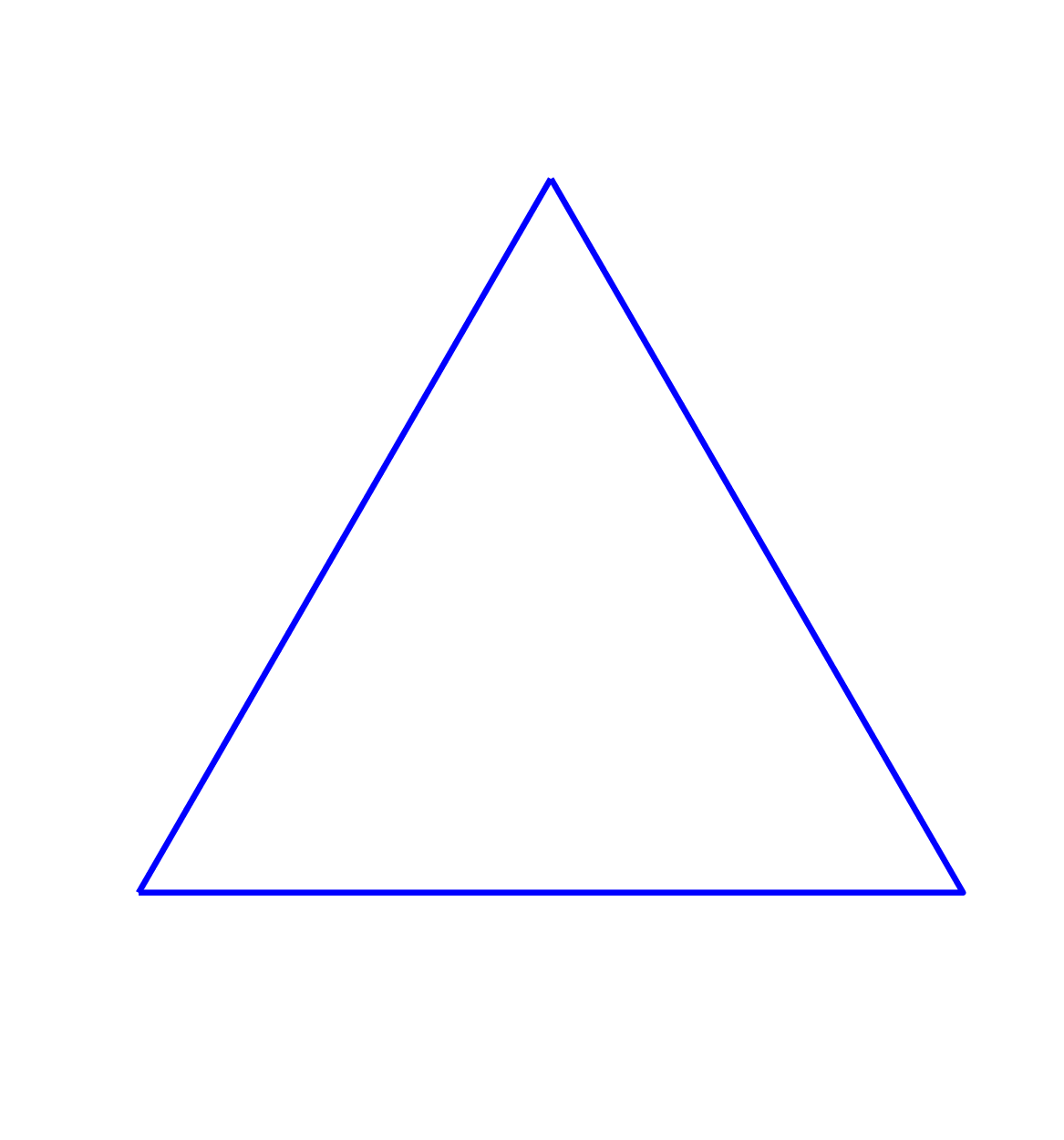}& \includegraphics[scale=0.17]{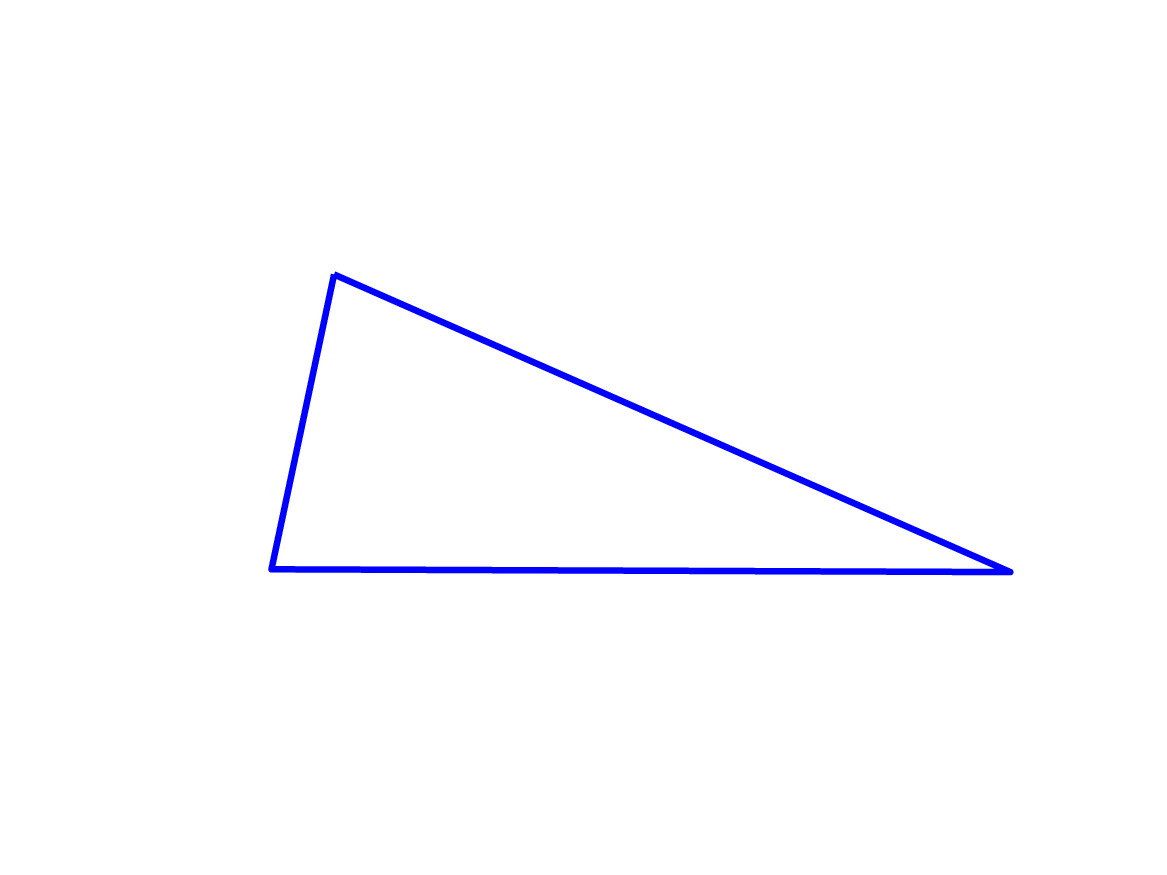}\\
\hline
\end{tabular}
    \caption{Extremal shapes corresponding to different values of $d_0$.}
    \label{fig:optimal_adp}
\end{figure}
 \begin{figure}[!ht]
 \centering
    \includegraphics[scale=0.5]{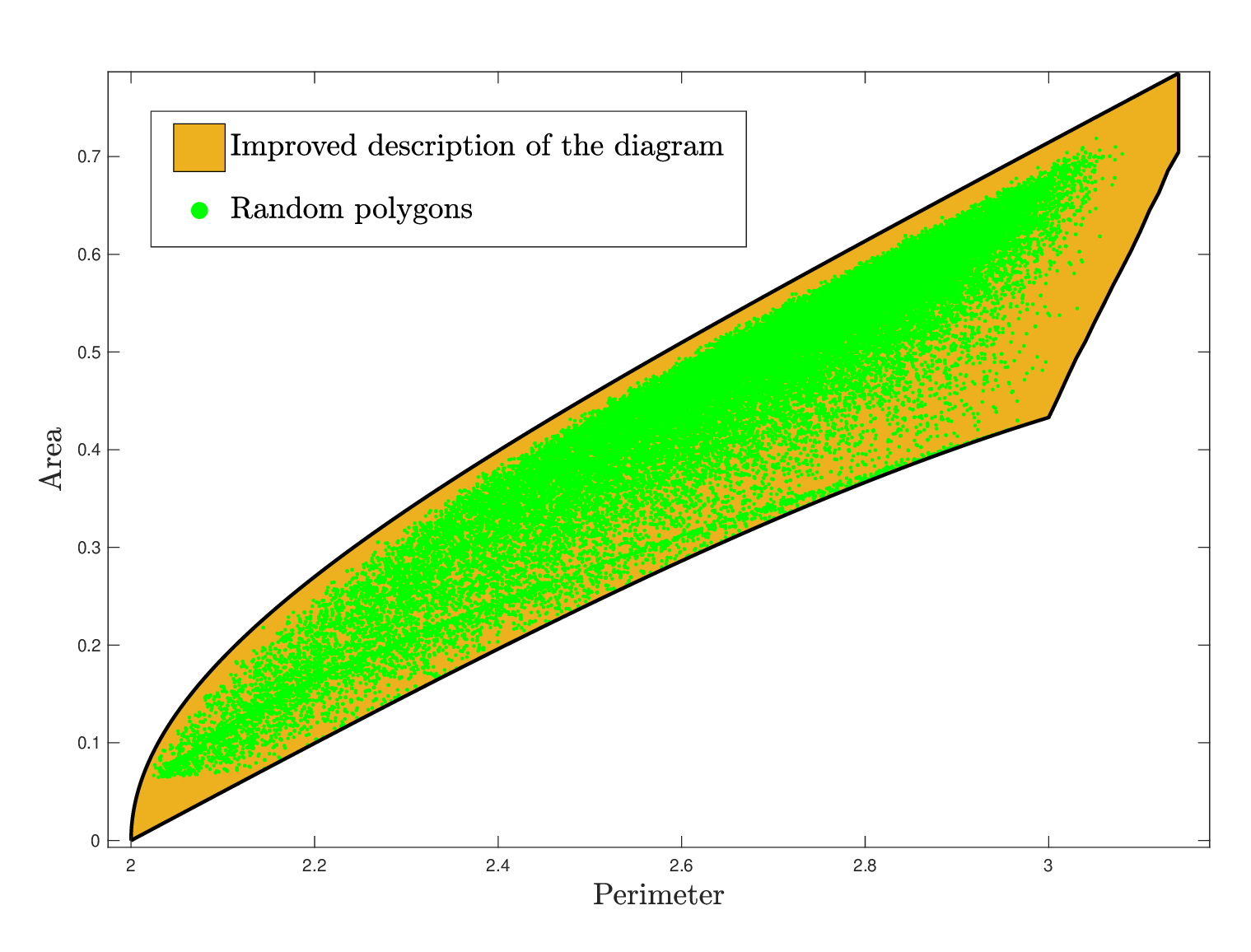}
    \caption{Improved description of the diagram of the triplet $(P,A,d)$.}
    \label{fig:improved_ADP}
\end{figure}

\newpage
\subsection{The diagram \texorpdfstring{$\D_2$}{D2}  of the triplet \texorpdfstring{$(P,\lambda_1,A)$}{(P,lambda,A)}}\label{ss:pla}

The diagram of the triplet $(P,\lambda_1,A)$ is given by
$$\D_2:=\{\big(P(\Om),\lambda_1(\Om)\big)\ |\ \Om \in \K\ \text{and}\ A(\Om)=1\}.$$

This diagram has been first introduced by P. Antunes and P. Freitas in \cite{freitas_antunes} and deeply investigated in \cite{ftouh}.

In a first time, we give an approximation of the diagram by generating $10^5$ random convex polygons (as it was done before in \cite{freitas_antunes}). We obtain the following Figure \ref{fig:random_PLA}: 
 \begin{figure}[!ht]
 \centering
    \includegraphics[scale=0.45]{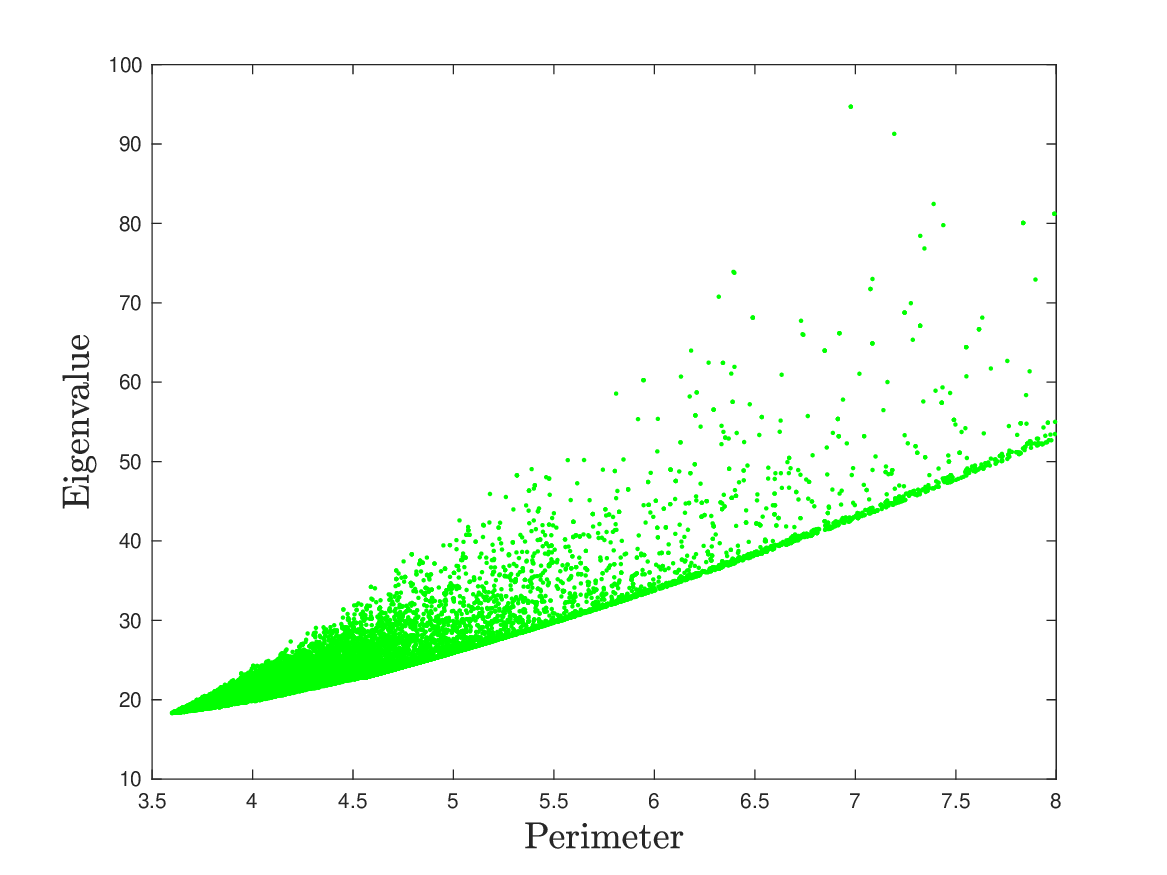}
    \caption{Approximation of the diagram via random convex polygons.}
    \label{fig:random_PLA}
\end{figure}

In order to give a more satisfying approximation of the diagram, we want to find the upper and lower domains and thus have a more accurate description of the boundary of the diagram. We are then led to (numerically) solve the following shape optimization problems:
\begin{equation}\label{prob:pla}
\max\backslash\min\{\lambda_1(\Om)\ |\ \Om\in \K,\ P(\Om) = p_0\ \text{and}\ A(\Om) = 1\},    
\end{equation}

It is shown in \cite[Theorem 1.2]{ftouh} that apart from the disk the domains that correspond to points located on the lower boundary are polygonal, while, the ones corresponding to points of the upper boundary are smooth $C^{1,1}$. We then apply the method based on the coordinates of the vertices described in Section \ref{ss:vertices} for the lower boundary and the other methods for the upper one and obtain quite satisfying results. In Figure \ref{fig:optimal_PLA}, we provide the obtained optimal shapes corresponding to solutions of the problems \eqref{prob:pla} for some relevant values of $p_0$. 

\begin{figure}[!ht]
    \centering
\begin{tabular}{|c|c|c|c|c|c|}
\hline
 Problem & $p_0=P(B)=2\sqrt{\pi}$ & $p_0=3.8$ & $p_0=4$& $p_0=4.2$\\
\hline
Upper boundary & \includegraphics[scale=0.16]{disk.eps} & \includegraphics[scale=0.21]{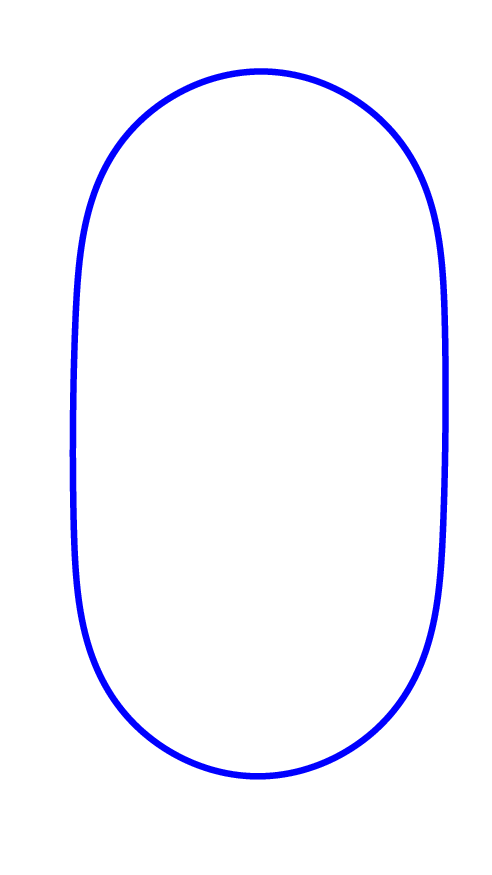}&  \includegraphics[scale=0.24]{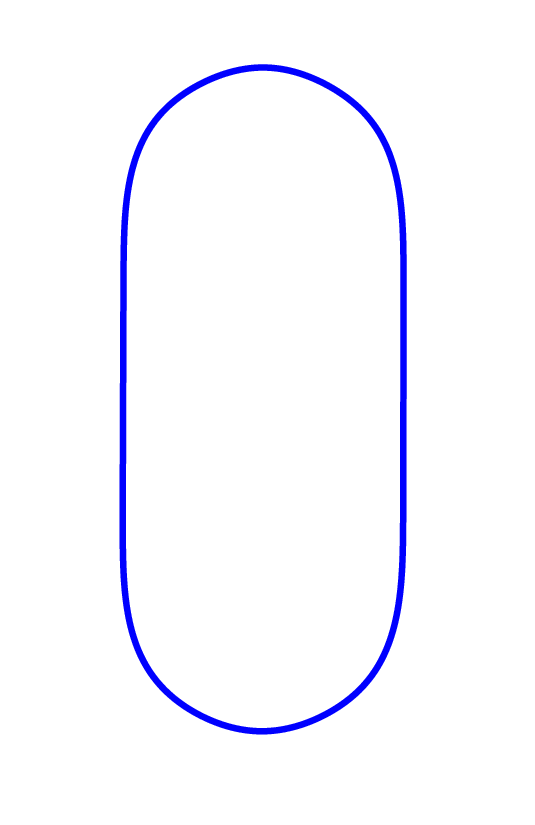}& \includegraphics[scale=0.27]{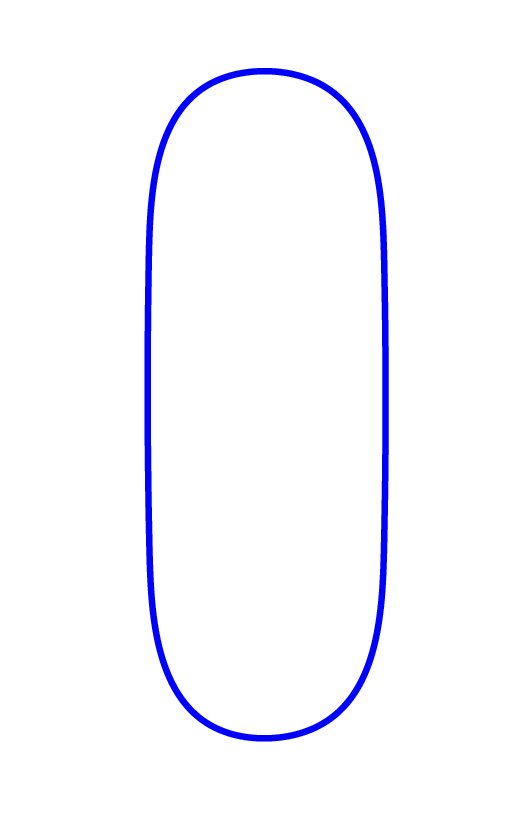} \\
\hline
Lower boundary &\includegraphics[scale=0.16]{disk.eps}& \includegraphics[scale=0.16]{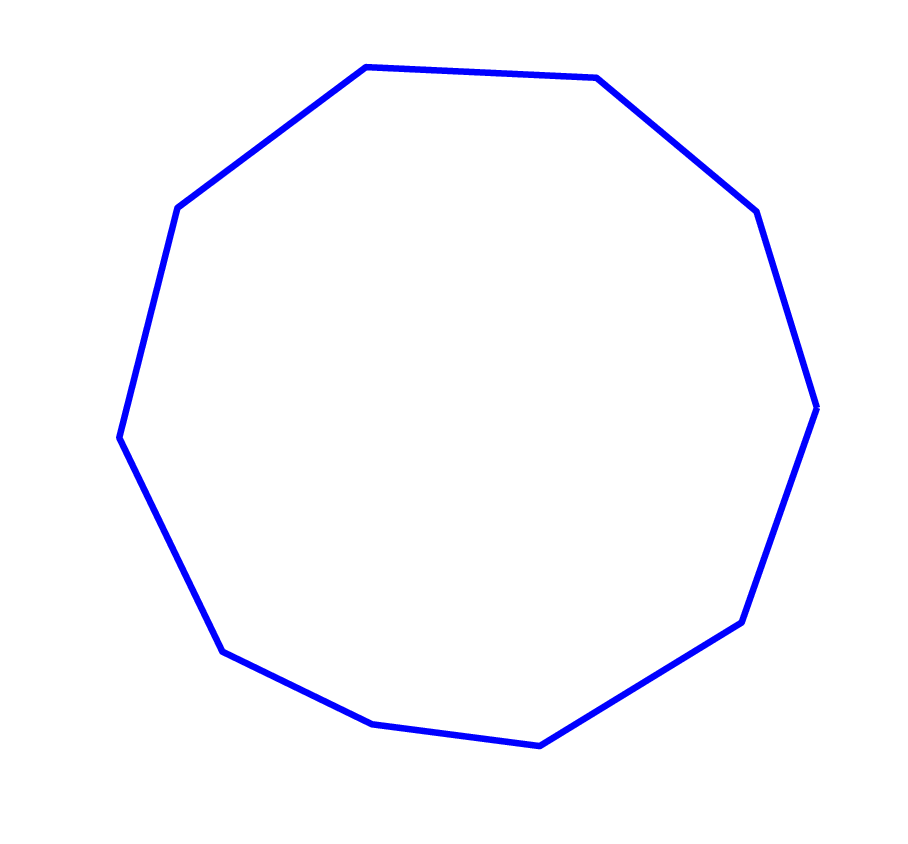}& \includegraphics[scale=0.135]{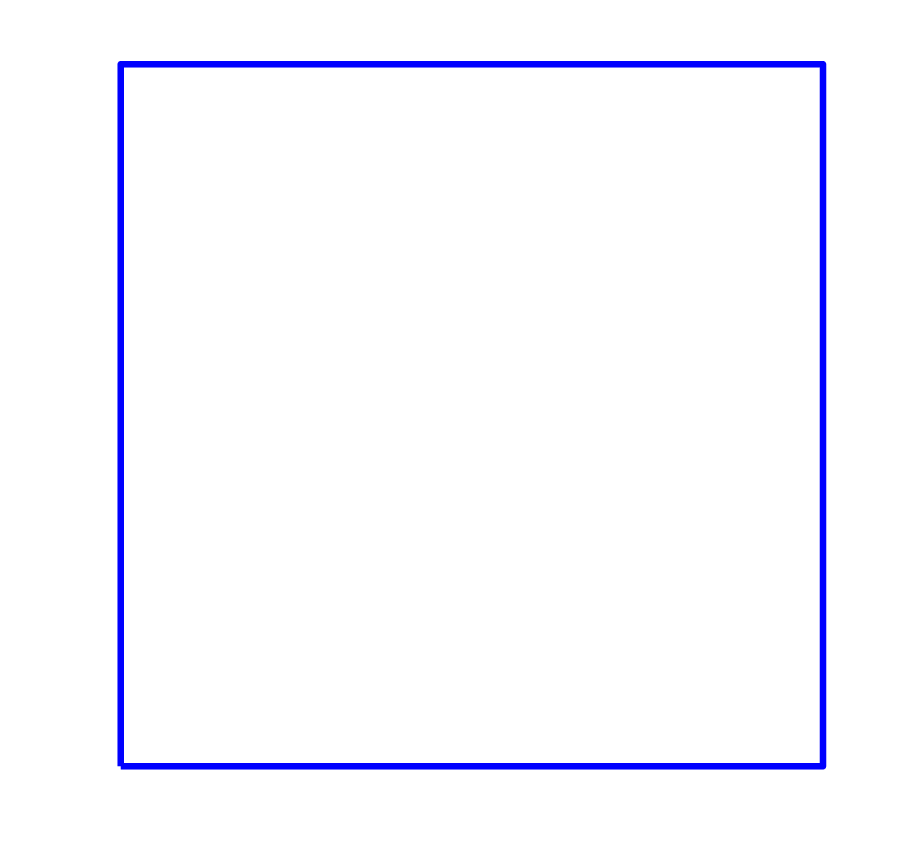}& \includegraphics[scale=0.17]{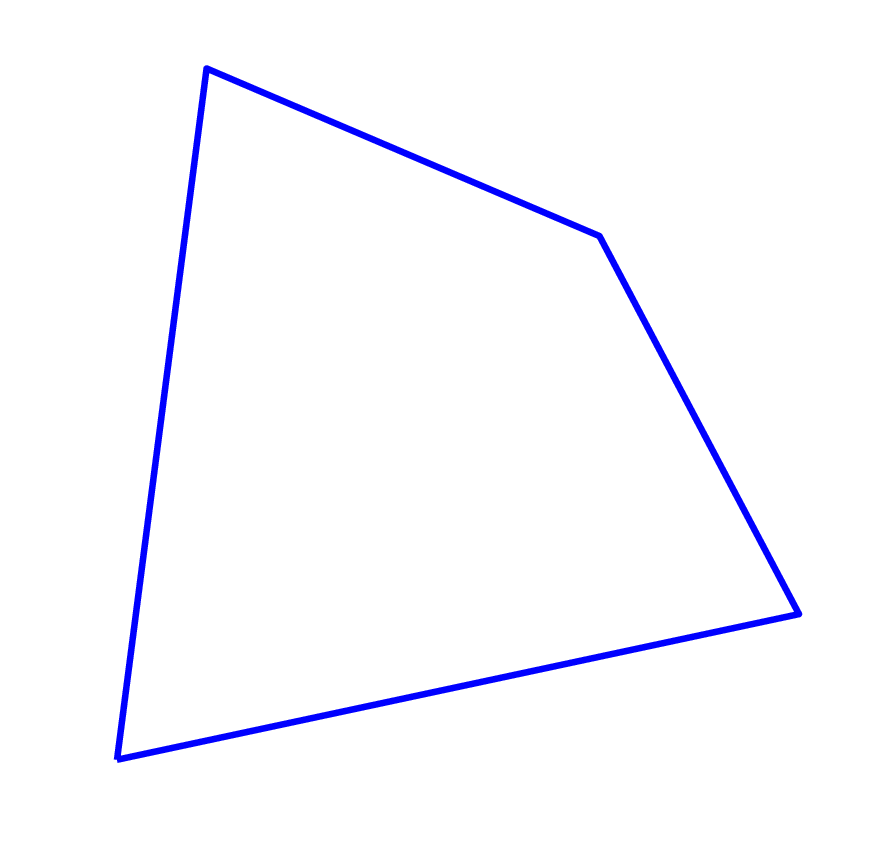}\\
\hline
\end{tabular}
    \caption{Numerically obtained optimal shapes corresponding to different values of $p_0$.}
    \label{fig:optimal_PLA}
\end{figure}

Finally, once the boundary is known, we use the vertical convexity of the diagram $\D_2$ (see \cite{ftouh}) to provide an improved numerical description of the diagram, see Figure \ref{fig:improved_text}.

 \begin{figure}[ht]
 \centering
    \includegraphics[scale=0.45]{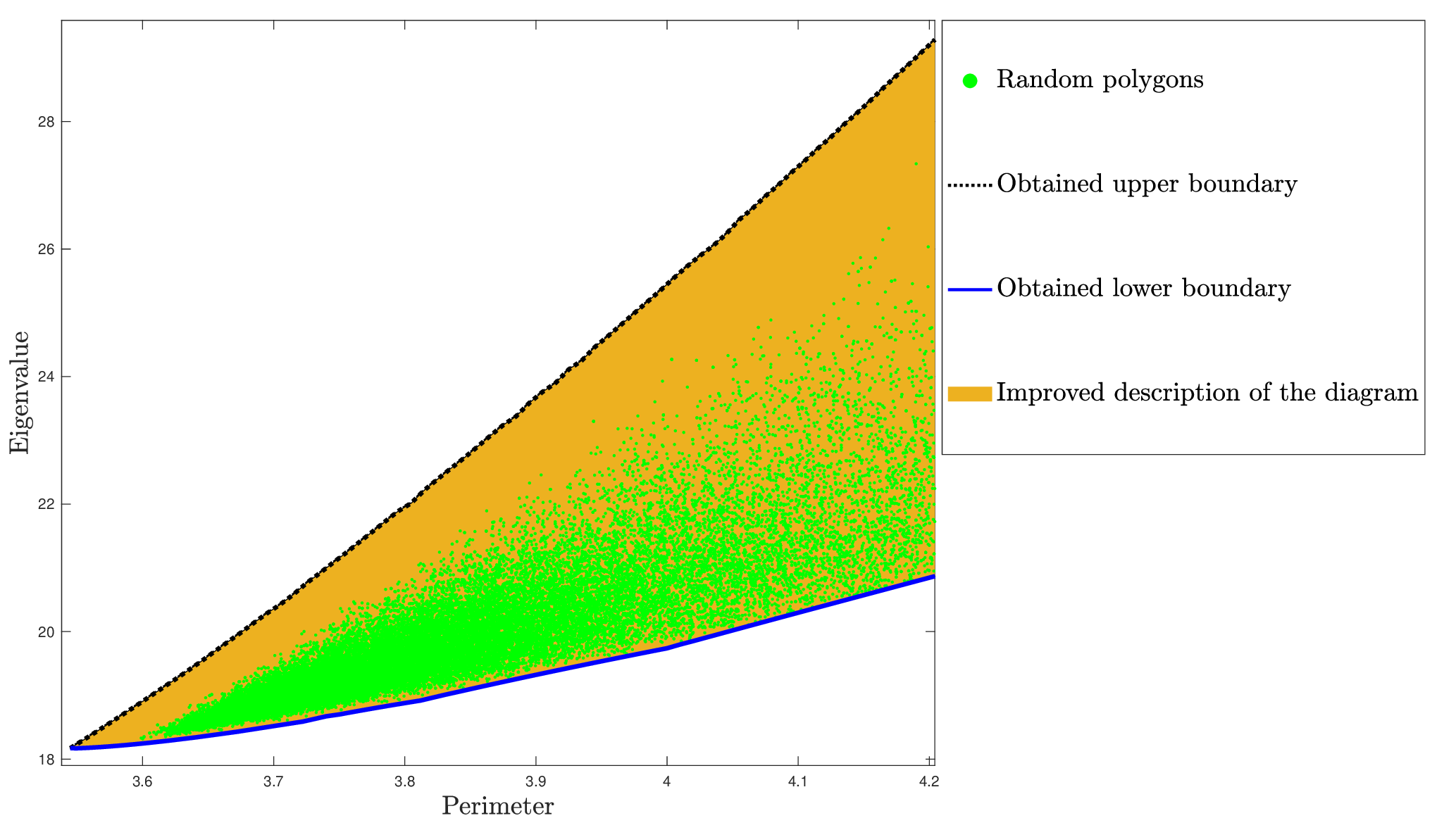}
    \caption{improved description of the diagram $(P,\lambda_1,A)$.}
    \label{fig:improved_text}
\end{figure}

\pagebreak
\subsection{The diagram \texorpdfstring{$\D_3$}{D3} of the triplet \texorpdfstring{$(d,\lambda_1,A)$}{(,lambda,A)}}\label{ss:dla}

Let us now consider the diagram $\D_3$ relating the diameter, the first Dirichlet eigenvalue and the area:
$$\D_3:= \{\big(d(\Om),\lambda_1(\Om)\big)\ |\ \Om \in \K\ \text{and}\ A(\Om)=1\}.$$

As for the previous diagrams, in a first time, we give an approximation of $\D_3$ by generating $10^5$ random convex polygons, see Figure \ref{fig:random_DLA}. 
 \begin{figure}[!ht]
 \centering
    \includegraphics[scale=0.5]{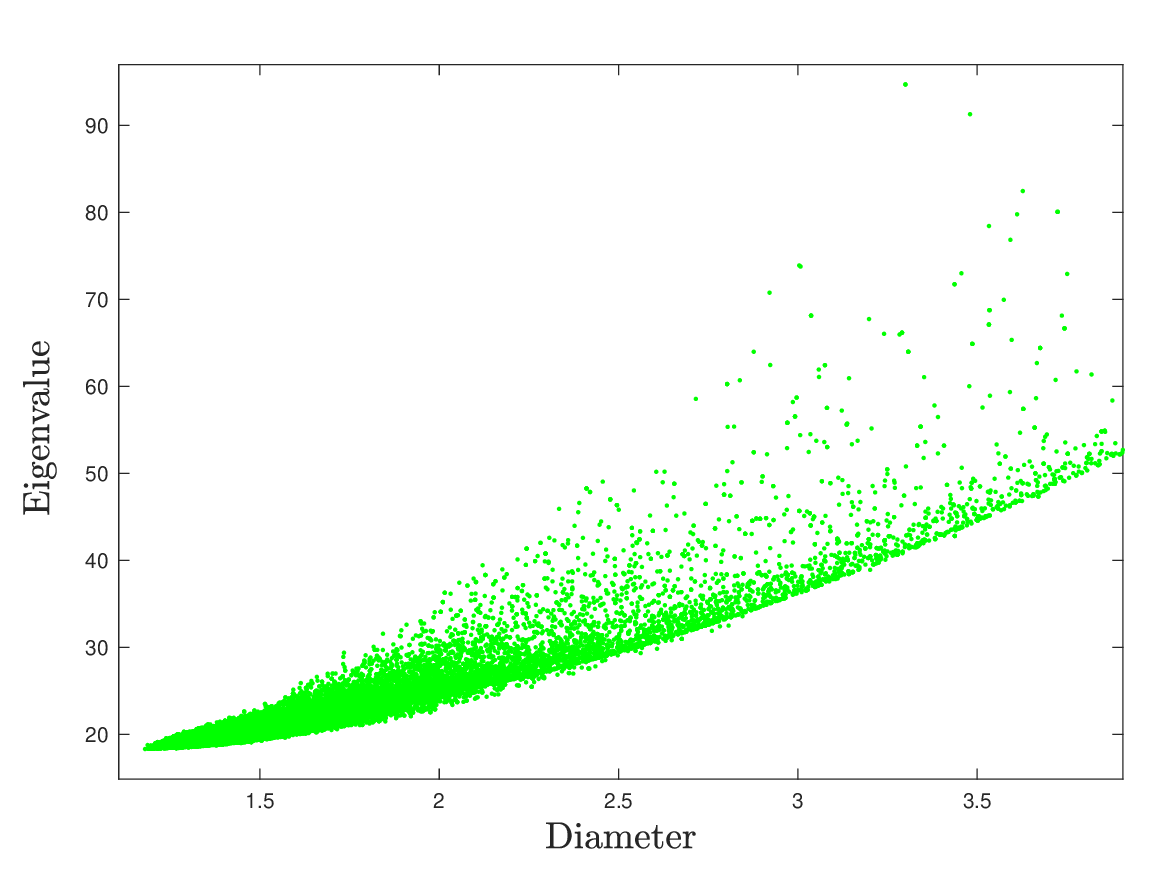}
    \caption{Approximation of the diagram $\D_3$ via $10^5$ random convex polygons.}
    \label{fig:random_DLA}
\end{figure}

Here also, we are aiming at describing the upper and lower boundaries of the diagram $\D_3$, which means to solve the following shape optimization problems: 
\begin{equation}\label{prob:dla}
\max\backslash\min\{\lambda_1(\Om)\ |\ \Om\in \K,\ d(\Om) = d_0\ \text{and}\ A(\Om) = 1\}, 
\end{equation}
where $d_0\in [2/\sqrt{\pi},+\infty)$ (because of the isodiametric inequality $\frac{d(\Om)}{\sqrt{A(\Om)}}\ge \frac{2}{\sqrt{\pi}}$).  

For \textbf{the lower boundary}, both the methods of parametrization via the Fourier coefficients of the support function (see Section \ref{ss:support_function}) and via the discretized radial function (see Section \ref{ss:radial_function}) provide satisfying results and suggest that the optimal sets are symmetrical 2-cap bodies, that are given by the the convex hulls of a disk and two points symmetric with respect its center (see Figure \ref{fig:two_cap}). 

 \begin{figure}[!ht]
        \centering
        \includegraphics[scale=0.3]{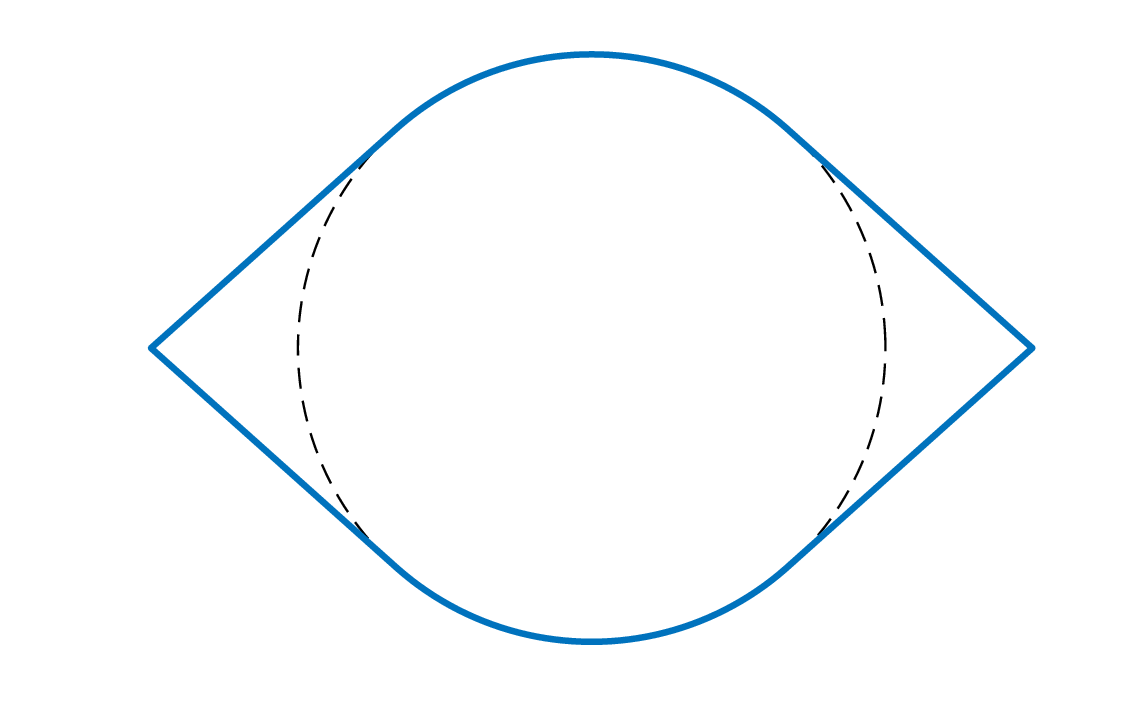} 
        \caption{Obtained symmetrical 2-cap body.}
        \label{fig:two_cap}
\end{figure}

On the other hand, \textbf{the upper domains} are quite surprising, since we (numerically) observe the existence of a threshold $d^*$, such that the solutions for $d> d^*$ seem to be given by symmetric spherical slices, that are domains defined as the intersection of a disk with a strip of width smaller that the disk's radius and centered at its center, see Figure \ref{fig:delyon_corp}, meanwhile, when $d< d^*$, the optimal domains seem to be given by some kind of smoothed regular nonagons, see Figure \ref{fig:delyon_corp}. 

 \begin{figure}[!ht]
        \centering
        \begin{tabular}{cc} \includegraphics[scale=0.35]{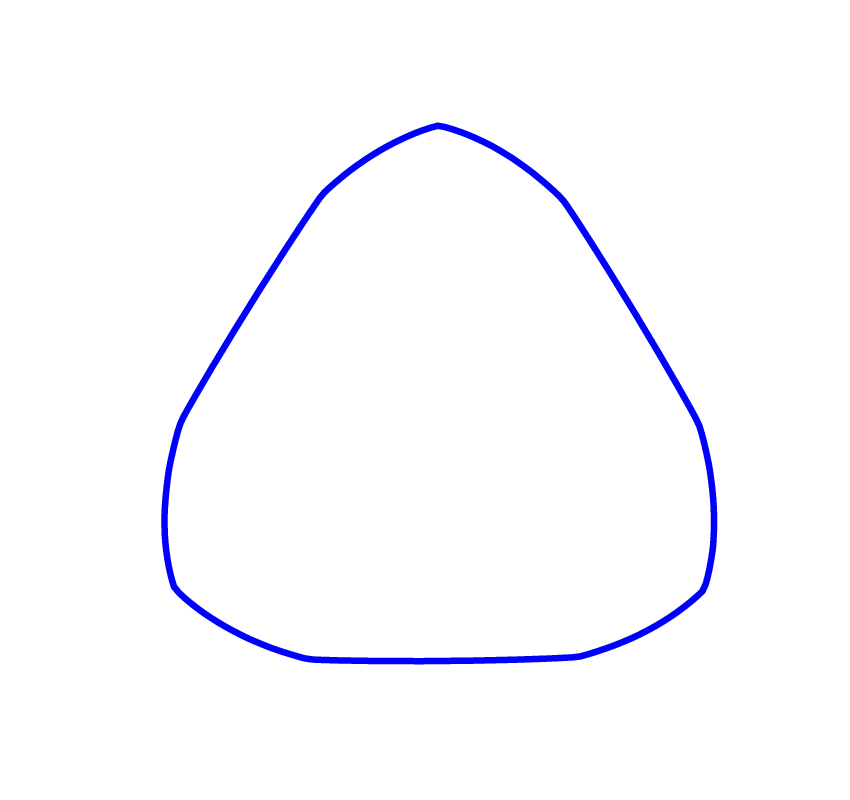} & \includegraphics[scale=0.35]{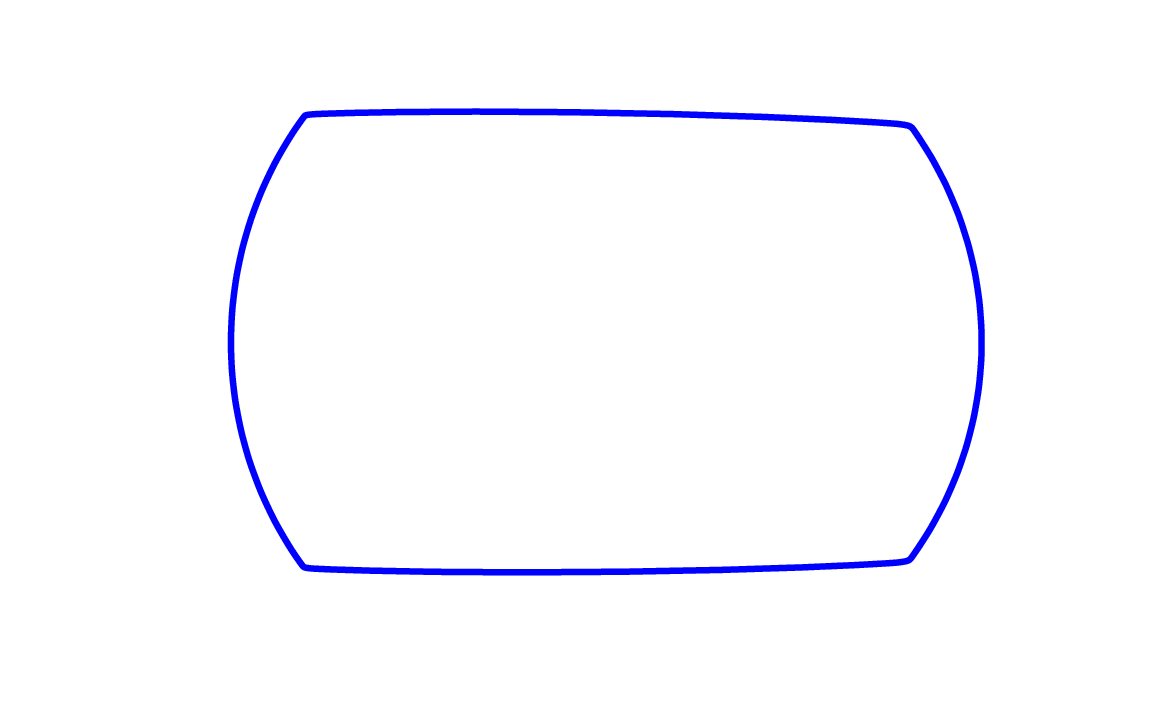} \\
        \end{tabular}
        \caption{Obtained upper shapes corresponding to $d_0=1.18$ for the smoothed nonagon and to $d_0=1.33$ for the symmetric slice, we used the parametrization via the Fourier coefficients of the support function with 161 coefficient ($N=80$).}
        \label{fig:delyon_corp}
\end{figure}

At a first sight, it may be surprising that the optimal shapes do not "continuously" vary in terms of the involved parameters, but, we should note that this phenomenon has recently been observed in \cite{delyon2}, where the authors provide the complete description of the diagram involving the diameter $d$, the inradius $r$ and the area $A$; they prove that one of the boundaries is filled by smoothed nonagons and symmetrical slices meanwhile the other one is filled by symmetrical 2-cap bodies. This leads us to investigate these families of shapes that also seem to be extremal shapes for the diagram $\D_3$ (see Figure \ref{fig:adl_avant}). These similarities may be explained by the fact that the functional $1/r$ corresponds to the first eigenvalue of the infinity-Laplace operator $\Delta_\infty$ which may be defined as the limit when $p\rightarrow+\infty$ of the $p$-Laplace operator (see \cite{MR2431408} and references therein). Meanwhile, $\lambda_1$ corresponds to the first eigenvalue of the $2$-Laplace operator, see \cite{MR2062882} for more details.  

We then compute the values taken by the involved functionals on the symmetrical slices and the smoothed nonagons and obtain Figure \ref{fig:adl_avant}. 
 \begin{figure}[!ht]
 \centering
    \includegraphics[scale=0.54]{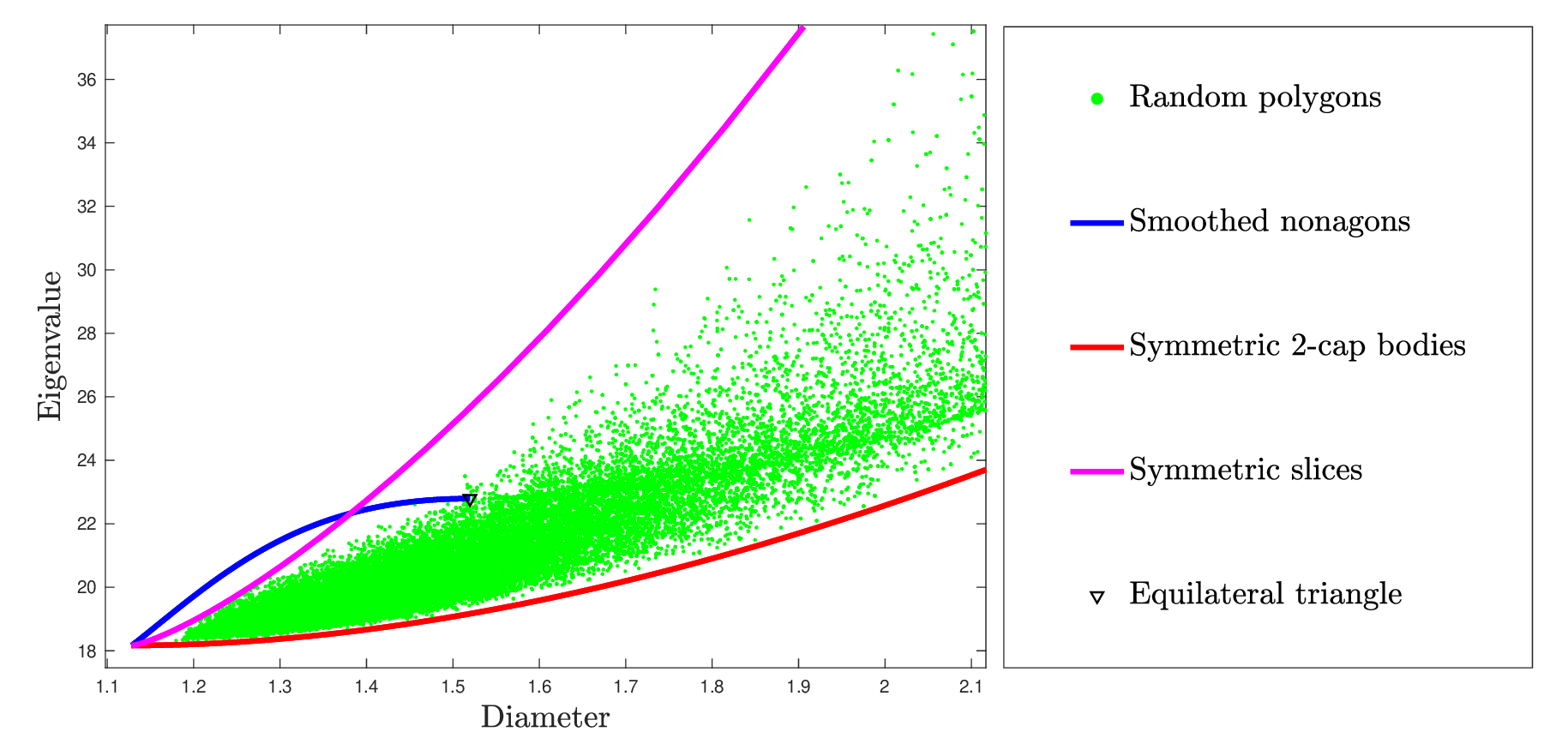}
    \caption{Approximation of the diagram  $\D_3$ with expected extremal sets.}
    \label{fig:adl_avant}
\end{figure}

\pagebreak
The expected optimal shapes and the best we managed to find are the ones given in the following table: 
\begin{figure}[!ht]
    \centering
\begin{tabular}{|c|c|c|c|c|c|}
\hline
 Problem & $d_0=d(B)=2/\sqrt{\pi}$ & $d_0=1.2$ & $d_0=d^*\approx 1.38\dots$& $d_0=1.9$\\
\hline
Upper boundary & \includegraphics[scale=0.16]{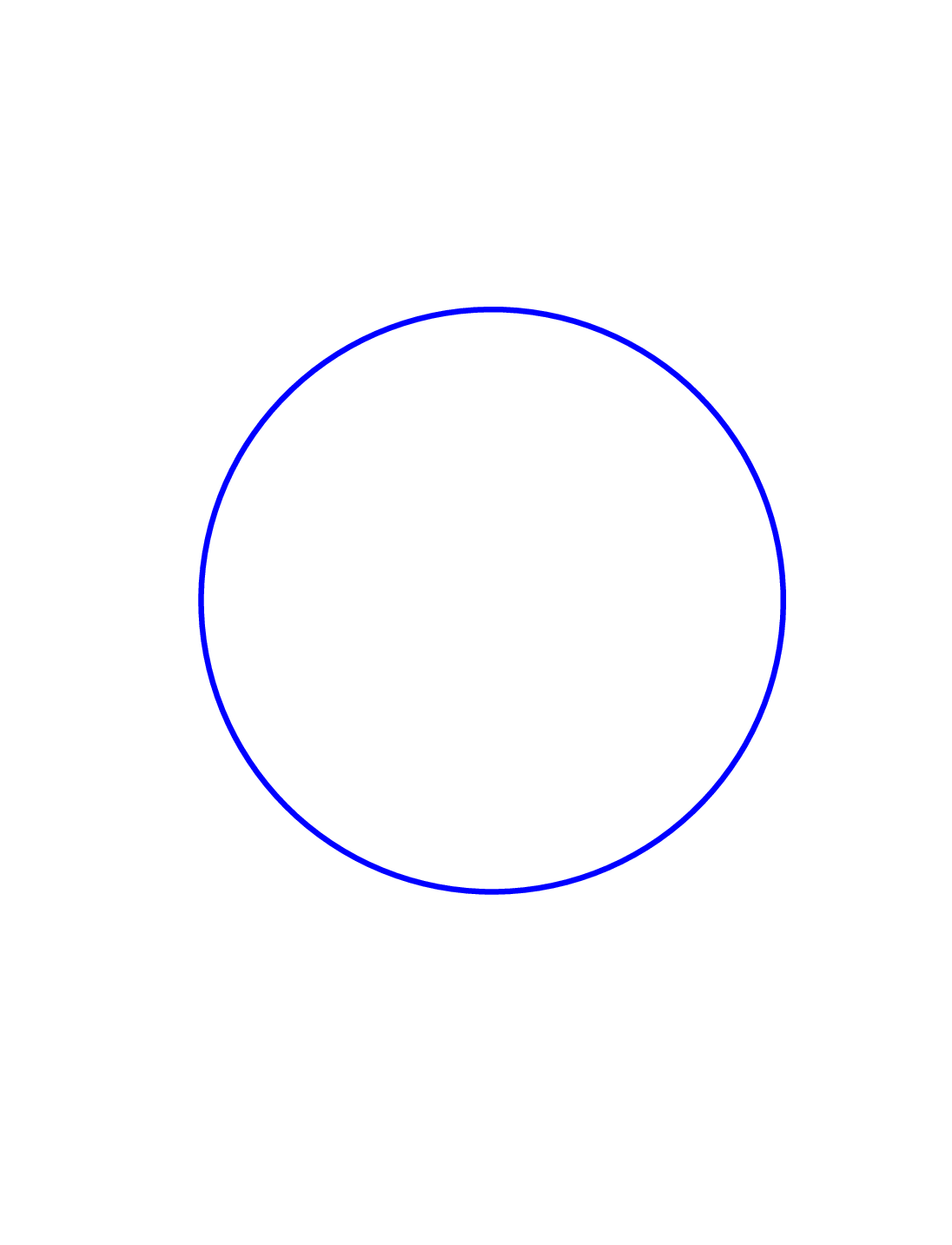} & \includegraphics[scale=0.15]{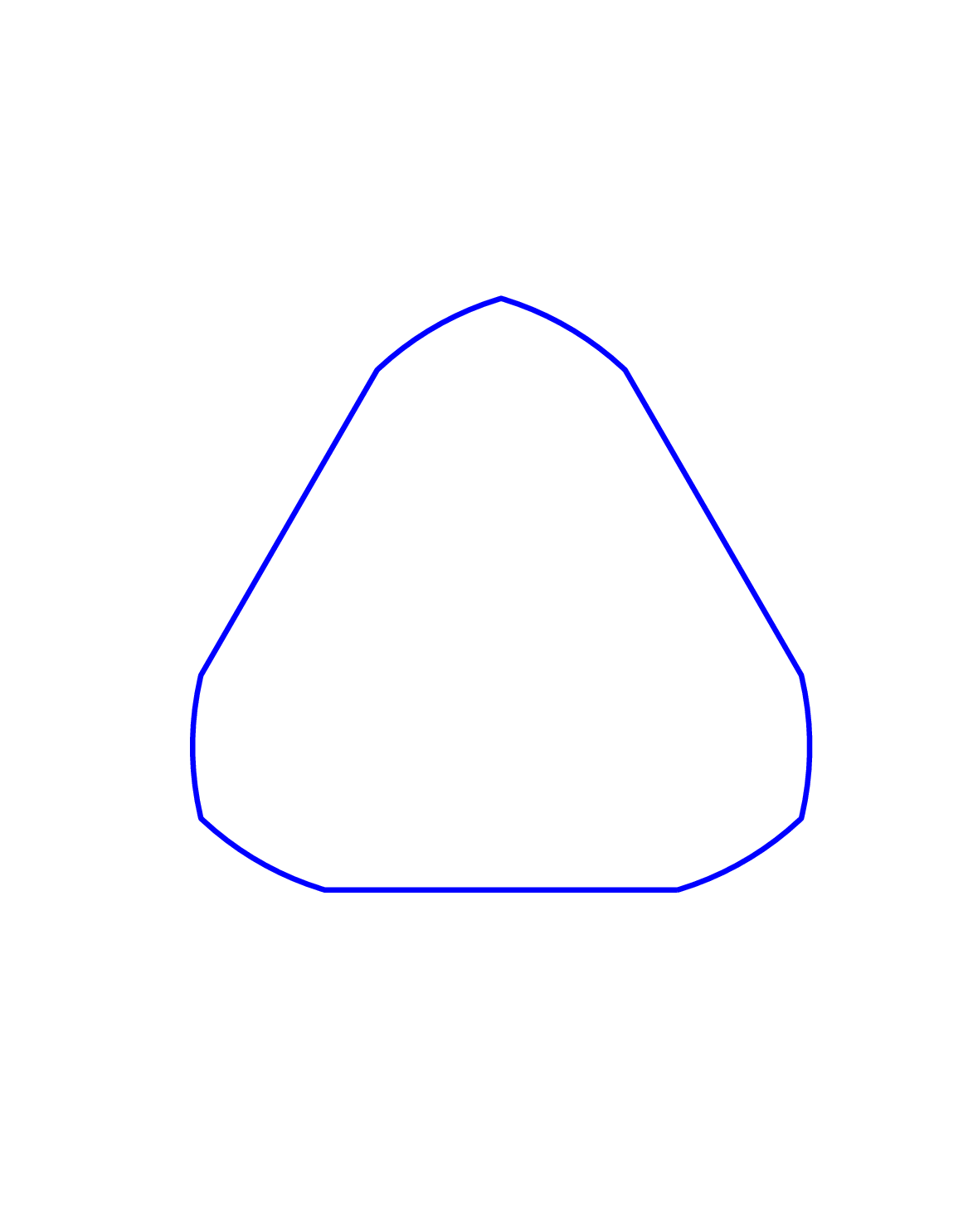}&  \includegraphics[scale=0.16]{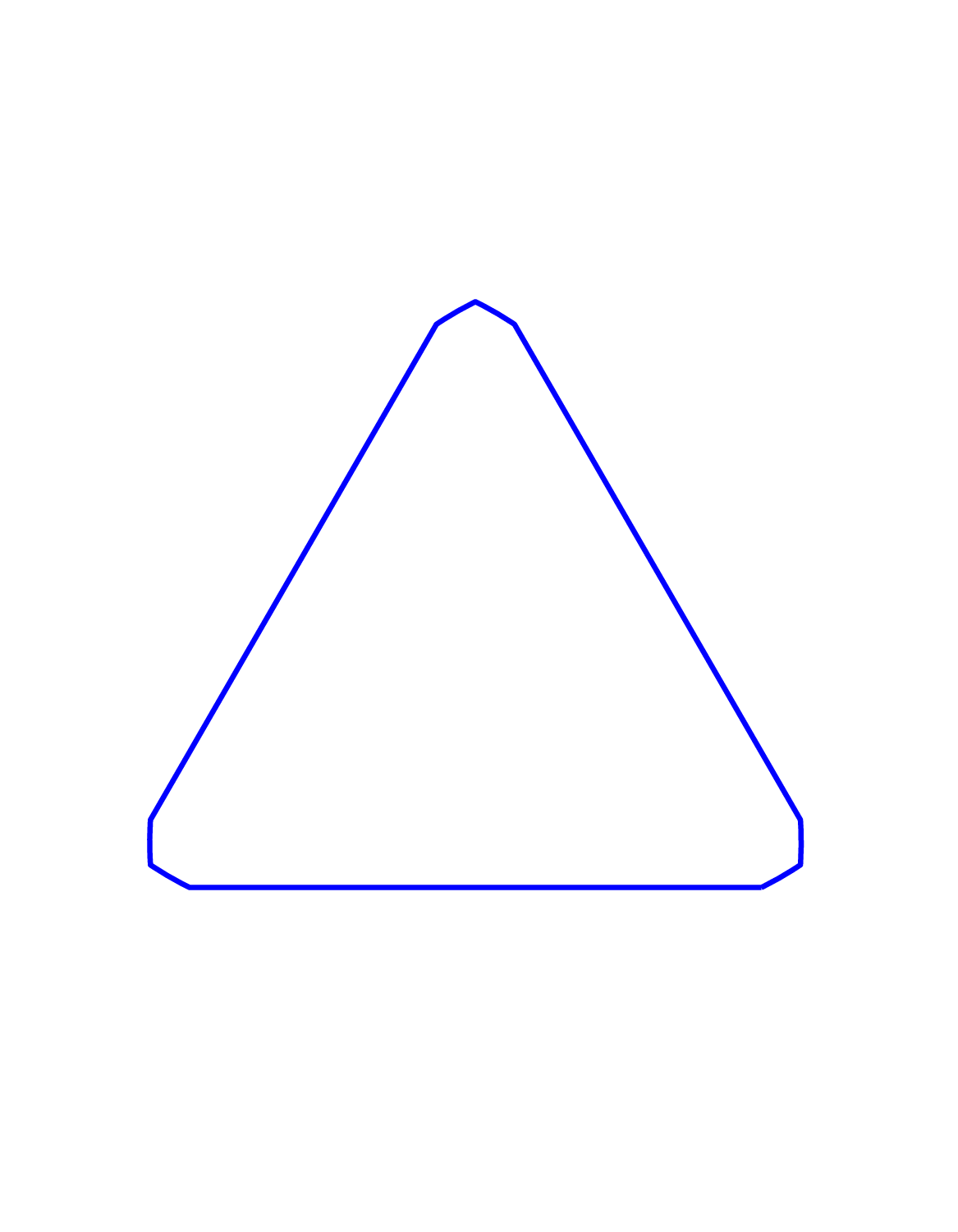}
\includegraphics[scale=0.22]{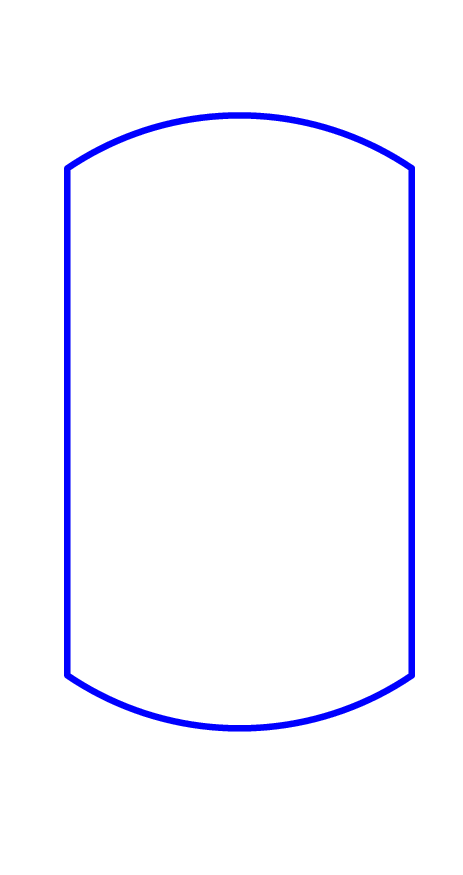}
& \includegraphics[scale=0.27]{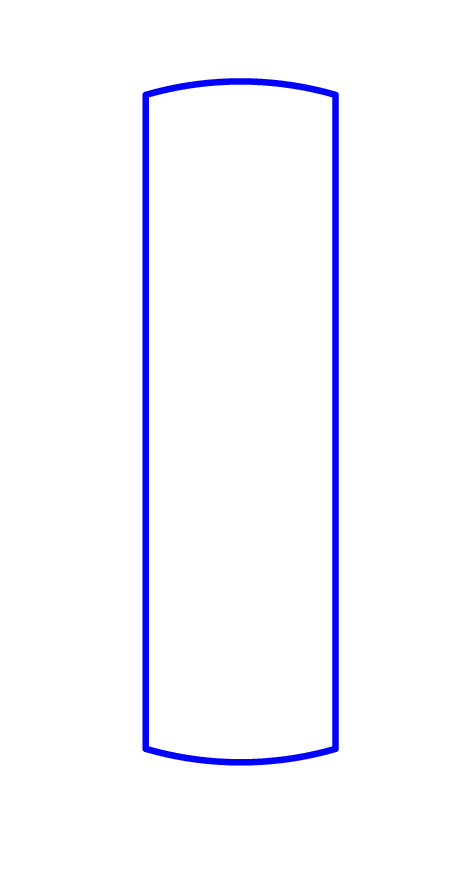}\\
\hline
Lower boundary &\includegraphics[scale=0.16]{disk__.eps}& \includegraphics[scale=0.225]{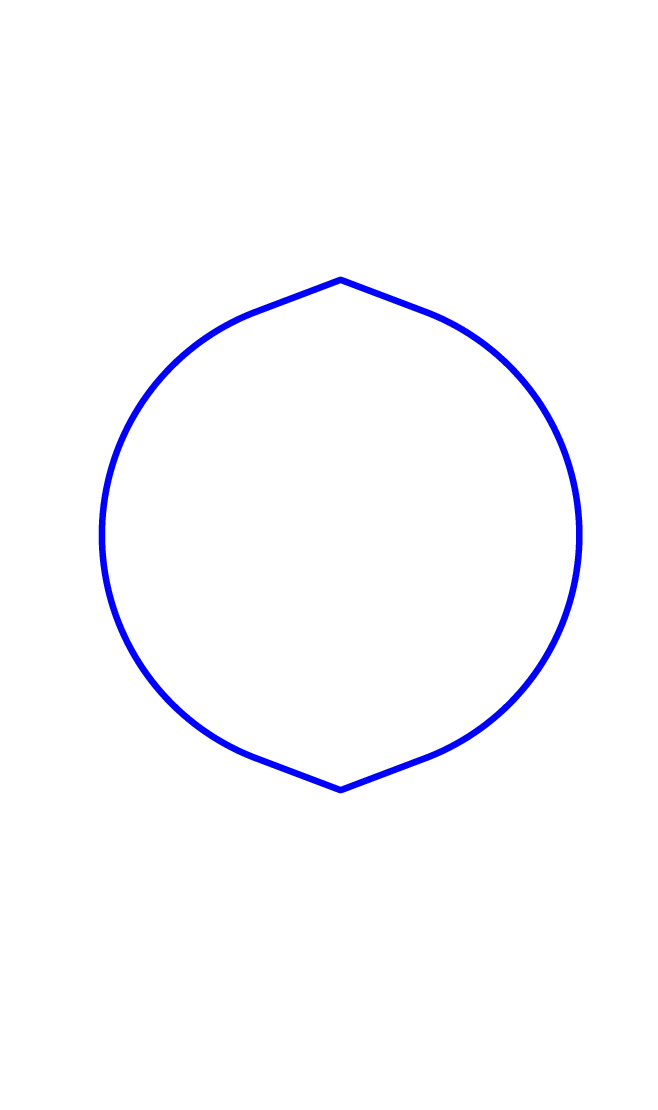}& \includegraphics[scale=0.24]{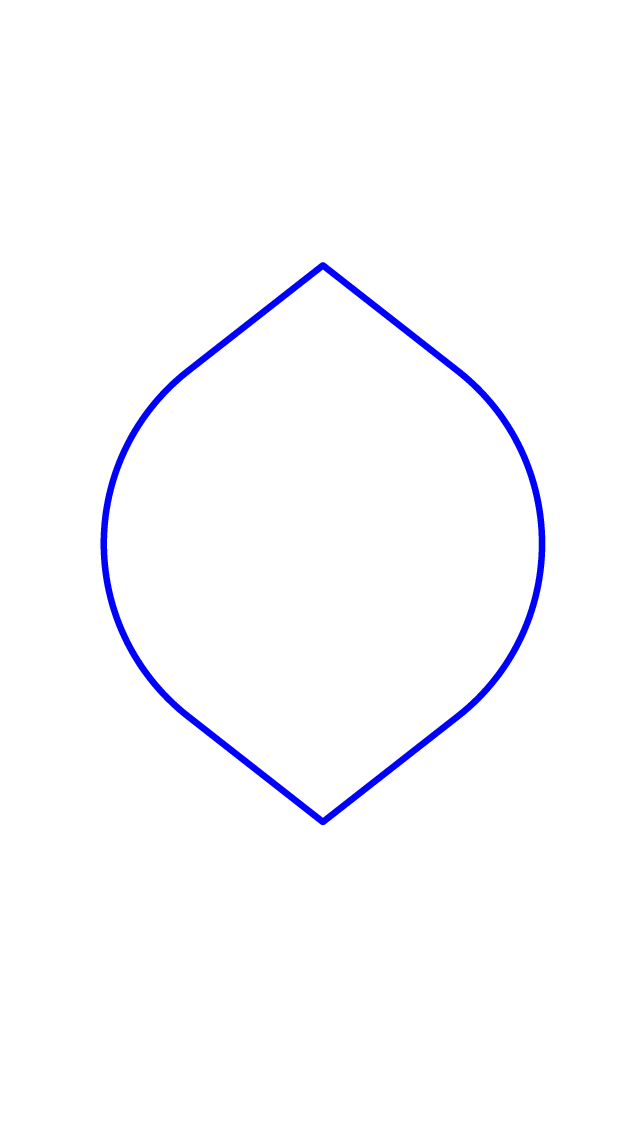}& \includegraphics[scale=0.28]{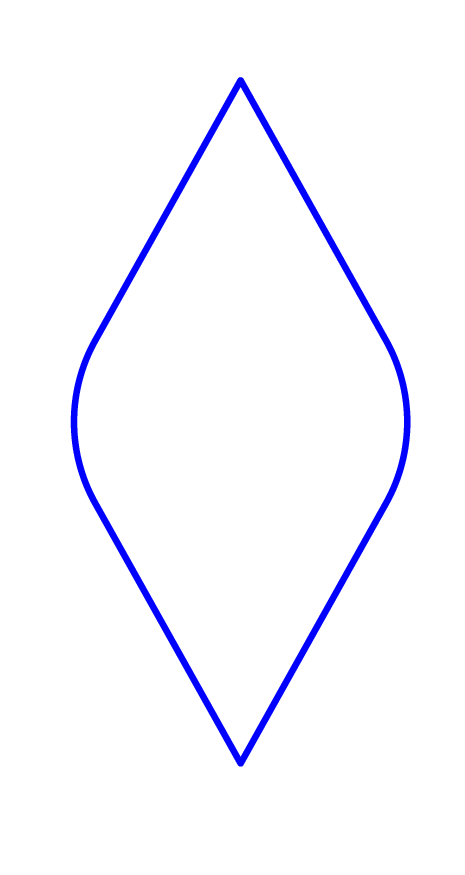}\\
\hline
\end{tabular}
    \caption{The best known shapes corresponding to different values of $d_0$.}
    \label{fig:optimal_DLA}
\end{figure}

Finally, by the vertical convexity of the diagram $\D_3$ stated in Theorem \ref{th:main_2}, we are able to provide an improved approximation of $\D_3$, see Figure \ref{fig:improved_adl}. 

 \begin{figure}[!ht]
 \centering
    \includegraphics[scale=0.6]{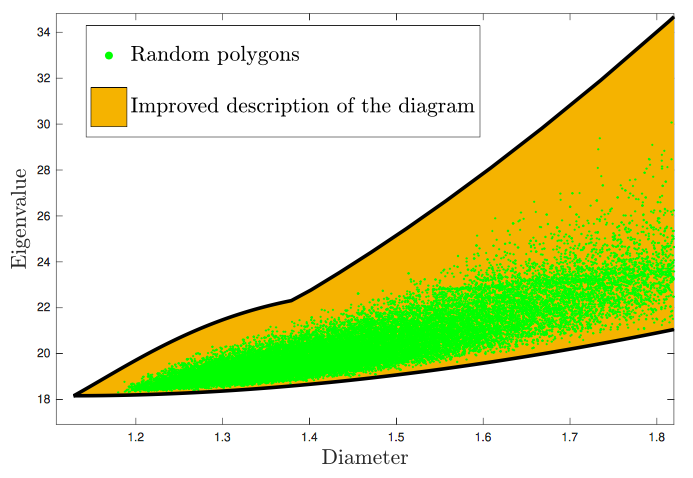}
    \caption{An improved description of the diagram $\D_3$.}
    \label{fig:improved_adl}
\end{figure}

\pagebreak
\color{black}
\subsection{Conclusion }
In the present paper, we propose a method to obtain improved descriptions of Blaschke--Santal\'o diagrams that combines theoretical and numerical methods. The quality of the obtained  results depends on the accuracy of the resolution of the shape optimization problems 
\begin{equation}
    \min\slash \max \{(J_1(\Om)\ |\ \text{$\Omega\in \K$},\ J_2(\Om)=c_0\ \text{and}\ J_3(\Om)=1\}.
\end{equation}
As we have noticed in the present work, the capacity of our algorithms to approximate optimal shapes strongly depends on their regularity and geometry. Therefore, simple shapes such as polygons, as in Section \ref{ss:pla},  will be easier to approximate (provided that we choose an adapted parametrization) than a more complicated shape such as smoothed nonagons as in Section  \ref{ss:proof_dla}. Therefore, it is interesting to develop relevant numerical schemes for the resolution of shape optimization problems under convexity constraint. 

We note that a clever idea to approximate Blaschke--Santal\'o diagrams by using Lloyd's algorithm was introduced recently in \cite{oudet_bogosel}. Nevertheless, it seems that treating the case of spectral functionals seems to present a serious computational challenge.  
\color{black}

\section*{Appendix: Computation of the shape derivative of the diameter}
 In the present appendix, we propose a method to compute the shape derivative of the diameter. This formula is used for numerical shape optimization in the previous sections. 
\begin{theorem}\label{th:diametre}
Let $V\in W^{1,\infty}(\R^2,\R^2)$ be a perturbation vector field. The diameter functional $d$ admits a directional shape derivative in the direction $V$ in any compact set $\Om \subset\R^2$. Moreover, we have
\begin{align*}
\exists (x_\infty,y_\infty) \in \Omega\times \Omega,\ \  d'(\Omega,V)&=\lim\limits_{t\rightarrow0^+}\frac{d(\Omega_t)-d(\Om)}{t}\\&= \sup\left\{\left\langle\frac{x-y}{|x-y|},V(x)-V(y)\right\rangle\ \Big|\ x,y \in \Omega,\ \text{such that}\  |x-y|=d(\Om)\right\}\\
&= \left\langle\frac{x_{\infty}-y_{\infty}}{|x_{\infty}-y_{\infty}|},V(x_{\infty})-V(y_{\infty})\right\rangle,
\end{align*}
where $\Om_t:=(I+tV)(\Om)$ and $I:x\in \R^2\longmapsto x\in \R^2$ is the identity map.
\end{theorem}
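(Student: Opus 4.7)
\medskip
\noindent\textbf{Proof proposal.} The plan is to establish the claimed formula by a standard sandwich argument, proving a $\liminf$ inequality via explicit diametral pairs of $\Omega$ and a matching $\limsup$ inequality by compactness.

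\smallskip
\noindent\emph{Preliminaries.} Since $\Omega$ is compact and $V$ is Lipschitz (hence continuous), the ``diametral pair set''
$$D(\Omega):=\{(x,y)\in\Omega\times\Omega\ |\ |x-y|=d(\Omega)\}$$
is a nonempty compact subset of $\Omega\times\Omega$, and the function
$\Phi_V:(x,y)\longmapsto\bigl\langle\frac{x-y}{|x-y|},V(x)-V(y)\bigr\rangle$
is continuous on $D(\Omega)$ (note $|x-y|=d(\Omega)>0$ unless $\Omega$ is a single point, a trivial case). Hence $\sup_{D(\Omega)}\Phi_V$ is finite and attained at some $(x_\infty,y_\infty)\in D(\Omega)$.

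\smallskip
\noindent\emph{Lower bound.} First I would prove that $\liminf_{t\to 0^+}\frac{d(\Omega_t)-d(\Omega)}{t}\ge \sup_{(x,y)\in D(\Omega)}\Phi_V(x,y)$. Let $(x,y)\in D(\Omega)$. The points $x+tV(x)$ and $y+tV(y)$ lie in $\Omega_t$, so $d(\Omega_t)\ge |(x-y)+t(V(x)-V(y))|$. Writing $a=x-y$ and $b=V(x)-V(y)$, the identity $|a+tb|^2=|a|^2+2t\langle a,b\rangle+t^2|b|^2$ and a first-order expansion give
$$d(\Omega_t)-d(\Omega)\ \ge\ |a+tb|-|a|\ =\ t\,\Phi_V(x,y)+O(t^2).$$
Dividing by $t$ and sending $t\to 0^+$ yields the lower bound, then I take the supremum over $(x,y)\in D(\Omega)$.

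\smallskip
\noindent\emph{Upper bound.} Pick a sequence $t_n\to 0^+$ realizing the $\limsup$. By compactness of $\overline{\Omega_{t_n}}$ the diameter is attained: there exist $x_n,y_n\in\Omega$ with $d(\Omega_{t_n})=|(x_n-y_n)+t_n(V(x_n)-V(y_n))|$. Extracting a subsequence, $x_n\to x_\infty$ and $y_n\to y_\infty$ in the compact $\Omega$. Since $d$ is continuous with respect to Hausdorff convergence and $\Omega_{t_n}\to\Omega$, passing to the limit gives $|x_\infty-y_\infty|=d(\Omega)$, so $(x_\infty,y_\infty)\in D(\Omega)$. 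Combining $d(\Omega)\ge|x_n-y_n|$ with the algebraic identity used above,
$$\frac{d(\Omega_{t_n})-d(\Omega)}{t_n}\ \le\ \frac{|a_n+t_nb_n|-|a_n|}{t_n}\ =\ \frac{2\langle a_n,b_n\rangle+t_n|b_n|^2}{|a_n+t_nb_n|+|a_n|},$$
with $a_n=x_n-y_n$, $b_n=V(x_n)-V(y_n)$. Passing to the limit (using $|a_n|\to d(\Omega)>0$ and continuity of $V$) produces $\Phi_V(x_\infty,y_\infty)\le \sup_{D(\Omega)}\Phi_V$, which is the desired upper bound; the final equality of the theorem is then the statement that the sup is attained at $(x_\infty,y_\infty)$.

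\smallskip
\noindent\emph{Main obstacle.} The only delicate point is the upper bound: one must work with diametral pairs of the perturbed set $\Omega_t$ (which depend on $t$) rather than of $\Omega$, and then show that any limit of such pairs is automatically diametral for $\Omega$. The compactness of $\Omega$ together with the Hausdorff continuity of the diameter make this step clean, but it does require the extraction of a convergent subsequence and the verification that $|x_\infty-y_\infty|=d(\Omega)$, which is the heart of the argument.
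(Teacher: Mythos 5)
Your proposal is correct and follows essentially the same sandwich argument as the paper: the lower bound via diametral pairs of $\Om$ used as test points, and the upper bound via diametral pairs of $\Om_{t_n}$, compactness extraction, and verification that the limit pair is diametral for $\Om$. The only (minor, beneficial) difference is your use of the exact identity $\frac{|a+tb|-|a|}{t}=\frac{2\langle a,b\rangle+t|b|^2}{|a+tb|+|a|}$ in place of the paper's Taylor expansions with $o(t_n)$ remainders, which sidesteps any uniformity concerns when the pairs vary with $n$.
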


\begin{proof}

We want to prove the existence and compute the limit$\lim\limits_{t\rightarrow0^+} \frac{d(\Omega_t)-d(\Om)}{t}$.

For every $t\ge 0$, $\Omega_t$ is compact as  it is the image of the compact $\Omega $ by the continuous map $(I+t V)$. Thus, since $d:(x,y) \in \mathbb{R}^2\times\mathbb{R}^2\rightarrow|x-y|$ is continuous, it is bounded from above and there exists $(x_t,y_t)\in \Om\times \Om$ such that $d(\Om_t)= |(I+t V)(x_t)-(I+t V)(y_t)|$. In what follows, we take $(x,y):=(x_0,y_0)$.\vspace{2mm}

We use $x_t$ and $y_t$ (resp. $x$ and $y$) as test points to bound $d(\Om_t)-d(\Om)$ from above  (resp. below): $$|(I+tV)(x)-(I+tV)(y)|-|x-y|\leq d(\Om_t)-d(\Om) \leq |(I+tV)(x_t)-(I+tV)(y_t)|-|x_t-y_t|.$$ 

For the lower estimate, we have
\begin{align*}
d(\Om_t)-d(\Om) &\ge |(I+t V)(x)-(I+t V)(y)|-|x-y|\\
&= |x+t V(x)-y-t V(y)|-|x-y| \\
&= \Big|(x-y)+t\Big(V(x)-V(y)\Big)\Big|-|x-y| \\
&= \sqrt{\Big|(x-y)+t\Big(V(x)-V(y)\Big)\Big|^2}-|x-y| \\
&= \sqrt{|x-y|^2+2t\left\langle x-y,V(x)-V(y)\right\rangle+o(t)}-|x-y|\\
&= |x-y|\sqrt{1+2t\left\langle\frac{x-y}{|x-y|},\frac{V(x)-V(y)}{|x-y|}\right\rangle+o(t)}-|x-y|\\
&= |x-y|\left(1+t\left\langle\frac{x-y}{|x-y|},\frac{V(x)-V(y)}{|x-y|}\right\rangle+o(t)\right )-|x-y|\\
&= t\left\langle\frac{x-y}{|x-y|},V(x)-V(y)\right\rangle+o(t).
\end{align*}

Thus $$\liminf\limits_{t\rightarrow0^+}\frac{d(\Om_t)-d(\Om)}{t}\ge \sup\left\{\left\langle\frac{x-y}{|x-y|},V(x)-V(y)\right\rangle\ |\ x,y \in \Omega\ \ \text{such that}\  |x-y|=d(\Om)\right\}.$$

Let us now focus on the upper estimate. Let $(x_{t_n})$ and $(y_{t_n})$ be respective sub-sequences of $(x_t)_t$ and $(y_t)_t$ such that  $$\lim\limits_{n\rightarrow+\infty} t_n=0 \ \ \  \text{and}\ \ \ \lim\limits_{n\rightarrow\infty} \frac{d(\Om_{t_n})-d(\Om)}{t_n} = \limsup\limits_{t\rightarrow0^+} \frac{d(\Om_t)-d(\Om)}{t}.$$ By Bolzano--Weirstrass Theorem, we assume without loss of generality that there exists $(x_\infty,y_\infty)\in\Omega\times \Om$ such that the sequences $(x_{t_n})$ and $(y_{t_n})$ respectively converge to $x_\infty$ and $y_\infty$. We then have $|x_\infty-y_\infty|=d(\Om)$. Indeed $$\forall(v,w)\in \Omega\times \Omega,\ \ \  |(I+t_nV)(x_{t_n})-(I+t_nV)(y_{t_n})|\ge |(I+t_nV)(v)-(I+t_nV)(w)|,$$
which is equivalent to $$\forall(v,w)\in \Omega\times \Omega,\ \ \ |x_{t_n}-y_{t_n}+t_n.V(x_{t_n})-t_n.V(y_{t_n})|\ge|v-w+t_n.V(v)-t_n.V(w)|.$$

Finally 
\begin{align*}
d(\Om_{t_n})-d(\Om) &\leq |(I+tV)(x_{t_n})-(I+tV)(y_{t_n})|-|x_{t_n}-y_{t_n}|\\
&= |x_{t_n}+{t_n}.V(x_{t_n})-y_{t_n}-{t_n}V(y_{t_n})|-|x_{t_n}-y_{t_n}| \\
&= \Big|(x_{t_n}-y_{t_n})+t_{n}\Big(V(x_{t_n})-V(y_{t_n})\Big)\Big|-|x_{t_n}-y_{t_n}| \\
&= \sqrt{\Big|(x_{t_n}-y_{t_n})+t_{n}\Big(V(x_{t_n})-V(y_{t_n})\Big)\Big|^2}-|x_{t_n}-y_{t_n}| \\
&= \sqrt{\Big|x_{t_n}-y_{t_n}\Big|^2+2t_{n}\Big\langle x_{t_n}-y_{t_n},V(x_{t_n})-V(y_{t_n})\Big\rangle+o(t_n)}-|x_{t_n}-y_{t_n}|\\
&= |x_{t_n}-y_{t_n}|\sqrt{1+2t_{n}\left\langle\frac{x_{t_n}-y_{t_n}}{|x_{t_n}-y_{t_n}|},\frac{V(x_{t_n})-V(y_{t_n})}{|x_{t_n}-y_{t_n}|}\right\rangle+o(t_n)}-|x_{t_n}-y_{t_n}|\\
&= |x_{t_n}-y_{t_n}|\sqrt{1+2t_{n}\left(\left\langle\frac{x_{\infty}-y_{\infty}}{|x_{\infty}-y_{\infty}|},\frac{V(x_{\infty})-V(y_{\infty})}{|x_{\infty}-y_{\infty}|}\right\rangle+o(1)\right)+o(t_{n})}-|x_{t_n}-y_{t_n}|\\
&= |x_{t_n}-y_{t_n}|\sqrt{1+2t_{n}\left\langle\frac{x_{\infty}-y_{\infty}}{|x_{\infty}-y_{\infty}|},\frac{V(x_{\infty})-V(y_{\infty})}{|x_{\infty}-y_{\infty}|}\right\rangle+o(t_{n})}-|x_{t_n}-y_{t_n}|\\
&= |x_{t_n}-y_{t_n}|\left(1+t_n\left\langle\frac{x_{\infty}-y_{\infty}}{|x_{\infty}-y_{\infty}|},\frac{V(x_{\infty})-V(y_{\infty})}{|x_{\infty}-y_{\infty}|}\right\rangle+o(t_{n})\right)-|x_{t_n}-y_{t_n}|\\
&= t_n\left\langle\frac{x_{\infty}-y_{\infty}}{|x_{\infty}-y_{\infty}|},V(x_{\infty})-V(y_{\infty})\right\rangle+o(t_{n}).
\end{align*}
Thus $$\limsup\limits_{t\rightarrow0^+}\frac{d(\Om_t)-d(\Om)}{t}\leq\left\langle\frac{x_{\infty}-y_{\infty}}{|x_{\infty}-y_{\infty}|},V(x_{\infty})-V(y_{\infty})\right\rangle.$$
By combining the inferior  and superior limits inequalities, we obtain
\begin{align*}
\liminf\limits_{t\rightarrow0^+}\frac{d(\Om_t)-d(\Om)}{t}&\ge \sup\left\{\left\langle\frac{x-y}{|x-y|},V(x)-V(y)\right\rangle\ |\ x,y \in \Omega\ ,\ \text{such that}\  |x-y|=d(\Om)\right\}\\
&\ge \left\langle\frac{x_{\infty}-y_{\infty}}{|x_{\infty}-y_{\infty}|},V(x_{\infty})-V(y_{\infty})\right\rangle\\
&\ge \limsup\limits_{t\rightarrow0^+}\frac{d(\Om_t)-d(\Om)}{t}\\
&\ge \liminf\limits_{t\rightarrow0^+}\frac{d(\Om_t)-d(\Om)}{t}.
\end{align*}
Finally, we deduce that the diameter admits a directional shape derivative in the direction $V$ and 
\begin{align*}
\exists (x_\infty,y_\infty) \in \Omega\times \Omega,\ \  \lim\limits_{t\rightarrow0^+}\frac{d(\Om_t)-d(\Om)}{t}&= \sup\left\{\left\langle\frac{x-y}{|x-y|},V(x)-V(y)\right\rangle\Big|\ x,y \in \Omega,\ \text{such that}\  |x-y|=d(\Om)\right\}\\
&= \left\langle\frac{x_{\infty}-y_{\infty}}{|x_{\infty}-y_{\infty}|},V(x_{\infty})-V(y_{\infty})\right\rangle.
\end{align*}

\end{proof}
\begin{remark}
    For clarity and coherence purposes, Theorem \ref{th:diametre} has been stated in the planar case, Nevertheless, it is not difficult to see that the result holds in higher dimensions. 
\end{remark}
\section*{Data Availability Statement}
The datasets generated during and analyzed during the current study are available from the corresponding author on reasonable request.

\section*{Acknowledgments}
The author extends gratitude to the anonymous referee for their thoughtful comments and meticulous review that helped to considerably improve the quality of the manuscript. 

This work was supported by the Alexander von Humboldt-Professorship program and by the project  ANR-18-CE40-0013 SHAPO financed by the French Agence Nationale de la Recherche (ANR). 
The author is thankful to Jimmy Lamboley and Edouard Oudet for stimulating discussions and comments on the topic of the paper. 

\bibliographystyle{plain}
\bibliography{samplee}

\vspace{1cm}
(Ilias Ftouhi) \textsc{ Friedrich-Alexander-Universit{\"a}t Erlangen-N{\"u}rnberg, Department of Mathematics, Chair of Applied Analysis (Alexander von Humboldt Professorship), Cauerstr. 11, 91058 Erlangen, Germany.}\vspace{1mm}

\textit{Email address:} \textbf{\texttt{ilias.ftouhi@fau.de}}







\end{document}